\newcommand{\isdraft}{\boolean{true}} % if this is a draft version
\renewcommand{\isdraft}{\boolean{false}} % if this is a final version
\ifthenelse{\isdraft}{% true: draft version
    \usepackage[color]{showkeys}
    \usepackage{xcolor}
}{}% false: final version
\newcommand{\markupdraft}[2]{% {#1: {color|display} command}{#2: desired color or text}
%  the next lines can be incommented, if respectively certain notes or coloring should disappear
    \ifthenelse{\equal{#1}{display}}{#2}{}%                 % display only in draft version
    \ifthenelse{\equal{#1}{color}}{\color{#2}}{}%           % colored only in draft (for \new command)
}
\newcommand{\nnotecolored}[3][]{\markupdraft{display}{{\color{#2}\noindent[{\bf Note #1}: #3]}}}
\newcommand{\mathnote}[1]{\markupdraft{display}{{\color{brown}\noindent[{\bf Math note}: #1]}}}
\newcommand{\newcolored}[3][]{{\markupdraft{color}{#2}#3}%  % kept in the final print
    \ifthenelse{\equal{#1}{}}{}{\markupdraft{display}{{\color{yellow!70!black}[#1]}}}} 
\providecommand{\del}[2][]{{\markupdraft{display}{{\color{red!20!yellow}[rmed: "#2"[#1]]}}}} % (to be) removed
\providecommand{\new}[2][]{\newcolored[#1]{blue}{#2}}%  % kept in the final print
\providecommand{\rem}[2][]{\nnotecolored[#1]{green}{#2}} 
\providecommand{\nnote}[2][]{\nnotecolored[#1]{magenta}{#2}}   % note with Anne's preferred default color
\providecommand{\smalltodo}[2][]{\markupdraft{display}{{\color{cyan}~\noindent== small todo: #2 {\color{yellow}(#1)} ==}}}
\providecommand{\todo}[2][]{\markupdraft{display}{{\color{red}\noindent++TODO: #2 {\color{yellow}(#1)}++}}}
\ifthenelse{\isdraft}{}{\renewcommand{\markupdraft}[2]{}}
\newcommand{\anne}[1]{\rem[Anne]{\color{orange}#1}}
\newtheorem{assumption}{Assumption}{\bfseries}{\itshape}
{\bfseries}{\itshape}
\renewcommand{\t}{t}
\renewcommand{\k}{t}
\newcommand{\citecompanion}{\cite{methodology-paper}}
\newcommand{\UUU}{\ensuremath{\mathbb{U}}}
\newcommand{\Rplusstar}{\R^{+}_{>}}
\newcommand{\Rnstar}{\R^{n}_{\neq}}
\newcommand{\Ball}{B}
\newcommand{\Sphere}{\mathbb{S}}
\newcommand{\F}{\mathcal{F}}
\newcommand{\FF}{\mathcal{F}}
\newcommand{\GG}{\mathcal{G}}
\newcommand{\G}{G}
\newcommand{\Y}{\mathbf{Y}}
\newcommand{\ZZ}{\mathcal{Z}}
\newcommand{\LL}{\mathcal{L}}
\newcommand{\LLL}{\bar{\mathcal{L}}}
\newcommand{\Monotone}{\ensuremath{\mathcal{M}}}
\newcommand{\Nplus}{\NNN_{>}}
\newcommand{\BB}{\mathbf{B}}
\newcommand{\Surf}{\Sphere}
\newcommand{\aN}{\mathcal{N}}
\newcommand{\as}{\factonefifth}
\def\CR{{\rm CR}}
\def\PS{{\rm PS}}
\newcommand{\SSel}{\mathcal{O}rd}
\newcommand{\OOrd}{\mathcal{O}rd}
\newcommand{\xzero}{\mathbf{x}_{0}}
\newcommand{\plusES}{\ensuremath{(1+1)}\xspace}
\renewcommand{\dim}{n}
\newcommand{\Id}{I_n}
\newcommand{\B}{\mathcal{B}}
\newcommand{\R}{\mathbb{R}}
\newcommand{\N}{\ensuremath{\mathcal{N}}}
\newcommand{\NNN}{\ensuremath{{\mathbb{{N}}}}}
\newcommand{\vea}{\ensuremath{\mathbf{a}}}
\newcommand{\X}{\ensuremath{\mathbf{X}}}
\newcommand{\Z}{\ensuremath{\mathbf{Z}}}
\newcommand{\s}{\ensuremath{\mathbf{s}}}
\newcommand{\x}{\ensuremath{\mathbf{x}}}
\newcommand{\uu}{\ensuremath{\mathbf{u}}}
\newcommand{\y}{\ensuremath{\mathbf{y}}}
\newcommand{\yy}{\ensuremath{\mathbf{\tilde{y}}}}
\newcommand{\z}{\ensuremath{\mathbf{z}}}
\newcommand{\n}{\ensuremath{\mathbf{n}}}
\newcommand{\qq}{{q}}
\newcommand{\Uspace}{\ensuremath{\mathbb{U}^{p}}}
\newcommand{\U}{\ensuremath{\mathbf{U}}}
\newcommand{\Ut}{\ensuremath{\mathbf{U}_{\k}}}
\newcommand{\Zt}{\ensuremath{\mathbf{Z}_{\k}}}
\newcommand{\Ztt}{\ensuremath{\mathbf{Z}_{\k+1}}}
\newcommand{\Xt}{\ensuremath{\mathbf{X}_\k}}
\newcommand{\Xtt}{\ensuremath{\mathbf{X}_{\k+1}}}
\newcommand{\Yt}{\ensuremath{\mathbf{Y}_\k}}
\newcommand{\st}{\ensuremath{\sigma_\k}}
\newcommand{\stt}{\ensuremath{\sigma_{\k+1}}}
\newcommand{\etastar}{\ensuremath{\eta^{\star}}}
\newcommand{\factonefifth}{\gamma}
\newcommand{\xstar}{\ensuremath{\mathbf{x}^{\star}}}
\newcommand{\zero}{\ensuremath{\mathbf{0}}}
\newcommand{\muLeb}{\mu_{\rm Leb}}
\newcommand{\Rplus}{\R^{+}}
\newcommand{\Normal}{\mathcal{N}(0,I_{n})}
\newcommand{\KK}{K_{\ref{lem:integrability}}}
\newcommand{\acprs}{CB-SARS}
\newcommand{\asars}{SARS}
\newcommand{\scaleSI}{\rho}
\newcommand{\Sol}{\mathcal{S}ol}
\newcommand{\Perm}{\mathcal{S}}%{\mathrm{Perm}}
\newcommand{\Isom}{\rm Homo}
\newcommand{\pp}{p_{{\mathcal{N}}}}
\title{Linear Convergence on Positively Homogeneous Functions of a Comparison Based Step-Size Adaptive Randomized Search: the (1+1) ES with Generalized One-fifth Success Rule 
%\thanks{This work was supported by the Society for Industrial and Applied Mathematics, Philadelphia, Pennsylvania.}
        }
\author{Anne Auger\thanks{INRIA Saclay-Ile-de-France ({\tt anne.auger\_AT\_inria.fr}).}
        \and Nikolaus Hansen\thanks{INRIA Saclay-Ile-de-France ({\tt nikolaus.hansen\_AT\_inria.fr}) }
        %\thanks{Various Affiliations, supported by various foundation grants.}
        }
\begin{document}

\maketitle

\begin{abstract}
In the context of unconstraint numerical optimization, this paper investigates the global linear convergence of a simple probabilistic derivative-free optimization algorithm (DFO). The algorithm samples a candidate solution from a standard multivariate normal distribution scaled by a step-size and centered in the current solution. This solution is accepted if it has a better objective function value than the current one. Crucial to the algorithm is the adaptation of the step-size that is done in order to maintain a certain probability of success. The algorithm, already proposed in the 60's, is a generalization of the well-known Rechenberg's $(1+1)$ Evolution Strategy (ES) with one-fifth success rule which was also proposed by Devroye under the name compound random search or by Schumer and Steiglitz under the name step-size adaptive random search. 
%All those algorithms were already proposed in the 60's beginning of 70's.

In addition to be derivative-free, the algorithm is function-value-free: it exploits the objective function only through comparisons. It belongs  to the class of comparison-based step-size adaptive randomized search (\acprs). For the convergence analysis, we follow the methodology developed in a companion paper for investigating linear convergence of \acprs: by exploiting invariance properties of the algorithm, we turn the study of global linear convergence on scaling-invariant functions into the study of the stability of an underlying normalized Markov chain (MC).

We hence prove global linear convergence by studying the stability (irreducibility, recurrence, positivity, geometric ergodicity) of the normalized MC associated to the $(1+1)$-ES. More precisely, we prove that starting from any initial solution and any step-size, linear convergence with probability one and in expectation occurs.
Our proof holds on unimodal functions that are the
composite of strictly increasing functions by positively homogeneous functions with degree $\alpha$ (assumed also to be continuously differentiable). This function class includes composite of norm functions but also non-quasi convex functions. Because of the composition by a strictly increasing function, it includes non continuous functions. We find that a sufficient condition for global linear convergence is the step-size increase on linear functions, a condition typically satisfied for standard parameter choices.

While introduced more than 40 years ago, we provide here the first proof of global linear convergence for the $(1+1)$-ES with generalized one-fifth success rule and the first proof of linear convergence for a \acprs\ on such a class of functions that includes non-quasi convex and non-continuous functions. Our proof also holds on functions where linear convergence of some \acprs\ was previously proven, namely convex-quadratic functions (including the well-know sphere function).

 \todo{in this paper plus comma paper: Link convergence on [give function properties] provided the step-size is increased on linear functions.}
\nnote{Concerning the question of the probability space - whether it makes sense to continue to use $\Pr$ while we define $P_{x}$ read Meyn Tweeedie page 56.}
\end{abstract}

\begin{keywords} 
linear convergence, derivative-free-optimization, comparison-based algorithm, function-value-free optimization , evolution strategies, step-size adaptive randomized search
\end{keywords}

%\begin{AMS}
%15A15, 15A09, 15A23
%\end{AMS}

\pagestyle{myheadings}
\thispagestyle{plain}
\markboth{A. AUGER AND N. HANSEN}{LINEAR CONVERGENCE OF THE (1+1)-ES WITH 1/5 SUCCESS RULE}

%\newpage
%\tableofcontents

\section{Introduction}

\todo{In Stich et al. they cite the Giens paper for performance comparisons w.r.t. newuoa and bfgs - we might want to discuss indeed the local search performance - at least mention that the algorithms are seldomly presented as robust local search algorithms, however indeed good local cv property (linear cv towards local optima). Not true for many ``GA'' or PSO. In the mind of people, methods are good for global convergence ``only''. }

Derivative-free optimization (DFO) algorithms have the advantage to handle numerical optimization problems where the function $f: \R^{\dim} \mapsto \R$ to be WLG minimized can be seen as a black-box that is only able to return an objective function value $f(\x)$ for a given input vector $\x$. This context is particularly useful when dealing with many numerical optimization problems. Indeed, first, the function that needs to be optimized can result from a computer simulation where the source code might be too complex to exploit or might not be available to the person who has to do the optimization (this is typical in industry, where often only executables of the code are provided). Hence automatic differentiation to compute the gradient is not conceivable. Second, gradients can be non-exploitable because the function can be ``rugged'' that is noisy, very irregular, ...

Among DFO, we distinguish \emph{function-value-free} (FVF) algorithms that do not exploit the exact objective function value but only comparisons between candidate solutions. The Nelder-Mead simplex algorithm is one of the oldest deterministic FVF algorithm \cite{NelderMead:65}.
While the distinction between DFO and FVF algorithms is rarely made, it has some importance both in theory and practice because FVF algorithms are invariant to composing the objective function by a strictly increasing function and hence can be seen as more robust. 

We here focus on a particular class of probabilistic or randomized comparison-based (or FVF) algorithms  that adapt a mean vector (thought as favorite solution) and step-size.  A general framework for those methods has been formalized under the name \emph{comparison based step-size adaptive randomized search} (\acprs) \citecompanion. Those methods find their roots among the first papers published on randomized FVF algorithms in the 60's \cite{Matyas:1965,Schumer:68,Devroye:72,Rechenberg}. They were, later on, further developed in the Evolution Strategies (ES) community. The nowadays state-of-the-art Covariance Matrix Evolution Strategy (CMA-ES) where in addition to the step-size, a full covariance matrix is adapted (allowing to solve efficiently ill-conditioned problems) ensued from the developments on \acprs\ \cite{hansen2001}. Note that contrary to some common preconception, randomized FVF (in particular CMA-ES) are competitive also for ``local'' optimization and can show superior performance \new{compared to}\del{than} the standard BFGS or the NEWUOA \cite{newuoaReport:2007} algorithm on \emph{unimodal} functions provided they are significantly non-separable and non-convex \cite{auger:colloquegiens}.

We investigate the convergence of one of the earliest \acprs, introduced independently by Rechenberg under the name $(1+1)$-ES with one-fifth success rule \cite{Rechenberg}, by Devroye as the compound random search \cite{Devroye:72} and by Schumer and Steiglitz as step-size adaptive random search \cite{Schumer:68}. 
Formally, let $\Xt \in \R^{n}$ be the mean of a multivariate normal distribution  representing the favorite solution at the current iteration $t$. A new solution centered in $\Xt$ and following a multivariate normal distribution with standard deviation $\st$ (corresponding also to the step-size) is sampled:
\begin{equation}\label{eq:sampling}
\Xt^{1} = \Xt + \st \Ut^{1}
\end{equation}
where  $\Ut^{1}$ follows a standard multivariate normal distribution, i.e., $\Ut^{1} \sim \N(0, \Id)$. 
%Note that the new solution is sampled according to $\N(\Xt,\st^{2} \Id)$, it is isotropically distributed around $\Xt$ due to the fact that the covariance matrix is proportional to the identity \todo{maybe I do a small picture}.
The new solution is evaluated on the objective function $f$ and compared to $\Xt$. If it is better than $\Xt$, in this case we talk about success, it becomes $\Xtt$, otherwise it is rejected:
 \begin{equation}\label{eq:update-mean}
\Xtt = \Xt + \st \Ut^{1} 1_{\{f(\Xt^{1}) \leq f(\Xt) \}} \enspace .
%+ \Xt 1_{\{f(\Xt^{1})> f(\Xt) \}} \enspace .
\end{equation}
As for the step-size, it is increased in case of success and decreased otherwise \cite{Schumer:68,Devroye:72,Rechenberg}. We denote $\factonefifth > 1$ the increasing factor and introduce a parameter $q \in \Rplusstar$ such that the factor for decrease equals $\factonefifth^{-1/q}$. Overall the step-size update reads
\begin{equation}\label{eq:update-ss}
\stt = \st \factonefifth 1_{\{f(\Xt^{1}) \leq f(\Xt) \}} + \st \factonefifth ^{-1/q} 1_{\{f(\Xt^{1})> f(\Xt) \}} \enspace .
\end{equation}
The idea to maintain a probability of success around $1/5$ was proposed in \cite{Schumer:68,Devroye:72,Rechenberg}. The constant $1/5$ is a trade-off between the asymptotic (in $n$) optimal success probability on the sphere function $f(\x) = \| \x \|^{2}$ where it is approximately  $0.27$ \cite{Schumer:68,Rechenberg} and the corridor function\footnote{The corridor function is defined as $f(\x) = \x_{1}$ for $- b < \x_{2} < b, \ldots -b  < \x_{n} < b$ (where $b >0$) otherwise $+ \infty$.} \cite{Rechenberg}. One implementation of the update of the step-size with target probability of success of $1/5$ is to set $-1/q = -1/4$\footnote{Assuming indeed a probability of success of $1/5$ and having set $q=4$ we find that $E[ \ln \stt | \st ] = \ln \st + \frac15 \ln \factonefifth + \frac45 \ln \factonefifth^{-1/4} = \ln \st $, i.e., the step-size is stationary.}. 
We call in the sequel the algorithm following equations \eqref{eq:sampling}, \eqref{eq:update-mean} and \eqref{eq:update-ss}, a $(1+1)$-ES with generalized one-fifth success rule and sometimes in short $(1+1)$-ES as there is no ambiguity for this paper that the step-size mechanism adopted is the generalized one-fifth success rule.
%\footnote{It was called step-size adaptive random search by Schumer and Steiglitz \cite{Schumer:68} and compound random search by Devroye \cite{Devroye:72}. }.

\acprs\ algorithms are observed to typically converge linearly towards local optima on a wide class of functions. Linear convergence of single runs is illustrated in Figure~\ref{fig:simul} for the $(1+1)$-ES on the simple sphere function $f(\x) = \| \x \|^{2}$. We observe that both the distance to the optimum $\| \Xt\|$ and the step-size $\st$ converge linearly at the same rate, in the sense that the logarithm of $\| \Xt \|$ (or $\st$) divided by $t$ converges to $- \CR$ (where $-\CR$ corresponds to the slope of the line observed in the second stage of the convergence).

Despite overwhelming empirical evidence of the linear convergence of \acprs\ and the fact that the methods are relatively old, few formal proofs of their linear convergence actually exist. A variant of the $(1+1)$-ES presented here was however studied by J\"agersk\"upper\footnote{In this variant, the step-size is kept constant for a period of several iterations before to increase or decrease depending on the observed probability of success during the period.} who proved on the sphere function and some convex-quadratic functions lower and upper bounds (on the time to reduce the error by a given fraction) that imply linear convergence \cite{Jens:2007,jens:gecco:2006,jens:2005,jens:tcs:2006}. The linear convergence of another \acprs\ using so-called self-adaptation as step-size adaptation mechanism was also proven on the sphere function \cite{TCSAnne04}.

We study in this paper the global linear convergence of the $(1+1)$-ES on a class of unimodal functions. More precisely convergence is investigated on functions $h$ that are the composition of a strictly increasing transformation $g$ by a positively homogeneous function with degree $\alpha$, $f$, i.e., satisfying $f(\rho(\x - \xstar)) = \rho^{\alpha} f(\x - \xstar) $ for any $\rho >0$, $\alpha >0$ and $\xstar$ that is the global optimum of the function (we assume that $f$ is strictly positive except in $\xstar$ where it can be zero). This class of function is a subset of scaling-invariant functions \citecompanion.

Under the assumptions that $f$ is continuously differentiable plus mild assumptions, we prove global linear convergence of the $(1+1)$-ES optimizing $h
$ provided $\gamma > 1$ and  the condition 
%%
%\begin{equation}\label{eq:cond-LC}
$
\frac12 \left( \frac{1}{\factonefifth^{\alpha}} + \factonefifth^{\alpha/q} \right) < 1
$
%\end{equation}
%%
is satisfied.
(This latter condition translates that the step-size increases on a linear function in the sense that one over expected change to the $\alpha$ on a linear function is smaller $1$.)
More formally, under the conditions sketched above, assuming w.l.o.g. that $\xstar$ is zero, we prove the existence of $\CR > 0$ such that from any initial condition $(\X_{0}, \sigma_{0})$, almost surely
$$
\frac{1}{t} \ln \frac{\| \Xt \| }{ \| \X_{0} \|} \xrightarrow[t \to \infty]{} - \CR  \mbox{ and }\,\, \frac{1}{t} \ln \frac{\st}{\sigma_{0}} \xrightarrow[t \to \infty]{} - \CR \enspace
$$
hold. We provide a comprehensive expression for the convergence rate as
$$
\CR = - \ln \gamma   \left( \frac{q+1}{q} \PS - \frac1q	  \right) 
$$
where $\PS$ is the asymptotic probability of success.
We also prove that in expectation from any initial condition $(\X_{0},\sigma_{0}) = (\x,\sigma)$
$$
E_{\frac{\x}{\sigma}} \ln \frac{\| \Xtt \|}{\| \Xt \|} \xrightarrow[t \to \infty]{} - \CR \mbox{ and } \,\, E_{\frac{\x}{\sigma}} \ln \frac{\stt}{\st} \xrightarrow[t \to \infty]{} - \CR \enspace.
$$
We finally precise the speed of convergence for the step-size of the two previous equations. We prove a Central Limit Theorem associated to the first equation and then
%of (i) $\frac{1}{t} \ln \frac{\| \Xt \| }{ \| \X_{0} \|}$ (resp. $\frac{1}{t} \ln \frac{\st}{\sigma_{0}}$) towards $-\CR$ by proving a Central Limit Theorem and of (ii) $E_{\frac{\x}{\sigma}} \ln \frac{\| \Xtt \|}{\| \Xt \|}$ (resp. $ E_{\frac{\x}{\sigma}} \ln \frac{\stt}{\sigma_{0}}$) towards $-\CR$ by proving
prove that $ E_{\frac{\x}{\sigma}} \ln \frac{\stt}{\st}$ converges geometrically fast towards $-\CR$.

Our proof technique follows a methodology developed in \citecompanion\ exploiting the fact that the $(1+1)$-ES is a scale-invariant \acprs\ and that thus linear convergence on scaling-invariant functions can be turned into the stability study of the homogeneous Markov chain $\Zt=\Xt / \st$. 
More precisely we study the $\psi$-irreducibility, Harris-reccurence, positivity and geometric ergodicity of $(\Zt)_{t \in \NNN}$. We use for this, standard tools for the analysis of Monte Carlo Markov chains algorithms  and in particular Foster-Lyapunov drift conditions \cite{Tweedie:book1993}.

This paper is organized as follows. In Section~\ref{sec:NMC} we summarize the results from the companion paper \citecompanion\ setting the framework for the theoretical analysis, i.e., allowing us to define the normalized Markov chain that needs to be studied for proving the convergence. In addition, we define the objective functions under study  and set some first assumptions. In Section~\ref{sec:stability} we study the normalized chain $(\Zt)_{t \in \NNN}$ namely its $\varphi$-irreducibility, aperiodicity, investigate small sets and prove its geometric ergodicity  that constitutes the core part of the study. Using those results, we finally prove in Section~\ref{sec:linear-convergence} the global linear convergence of the $(1+1)$-ES with generalized one-fifth success rule almost surely and in expectation. We provide a comprehensive expression for the convergence rate. Last we discuss our findings in Section~\ref{sec:discussion}.

\todo{
\begin{itemize}
\item talk in the intro or somewhere else about Motivation w.r.t line search (see below).
\end{itemize}
}

\nnote{Literature review - motivation: QueryComplexityDFO.pdf : analyze complexity of comparison based for stochastic optimization (i.e., noise) motivate with DE algorithm. Cite 3 paper for bounds when opt. problem not noisy - including nesterov below (propose algorithm like EGS)

NesterovRandomGradientFreeConvex.pdf : analyze an EGS like algorithm - check carefully what is done there. Seems to  be considered as the god as only paper cited.

http://dev.related-work.net/arxiv:0912.3995 - several references.

Motivation wrt line search:

$\star$ adaptation of step-size use information already within the population - no additional assumption.

$\star$ perfect line search gives lower bound of $1 - \frac{1}{n}$--``tight'' as expected distance decrease optimally bracketed between $1-\frac1n$ and $1-0.5 \frac1n$.

General motivation (see Jens Hit-and-Run intro which is quite good): said in Nesterov and in Nemirovsky and Yudin: difficult to study.

Note: generally puzzled by the bound for noisy optimization - they are probably not the lower bounds I believe as on multiplicative noise we get linear convergence.

Randomized DFO are arguably more robust to noise, outliers or multi-modality than deterministic DFO due to their stochastic natures. We however focus here on a ``local'' convergence property deriving a linear convergence rate.
}

\paragraph*{Notations} We denote $\Normal$ a standard multivariate normal distribution, i.e., with mean vector $0$ and covariance matrix identity. Its density is denoted $\pp$. Given a set $C$ we denote its complementary $C^{c}$. We denote $\Rnstar$ the set $\R^{n}$ minus the null vector and $\Rplusstar$ denotes the set of strictly positive real numbers. The set of strictly increasing functions $g$ from $\R$ to $\R$ or a subset of $\R$ to $\R$ is denoted $\Monotone$. Given an objective function $f$ we denote by $\LL_{c}$ the level set $\{ \x, f(\x) = c\}$ and by $\LLL_{c}$ the corresponding sublevel set, i.e., $\LLL_{c}= \{ \x, f(\x) \leq c  \}$. We denote by $\NNN$ the set of natural numbers including $0$, i.e., $\NNN=\{0,1,\ldots \}$ and $\mathbb{N}_{>}$ the set $\{1, 2, \ldots \}$. The euclidian norm of a vector $\x$ is denoted $\| \x \|$. A ball of center $\x$ and radius $r$ is denoted $\BB(\x,r)$.

\section{Normalized Markov Chain and Objective Function Assumptions}\label{sec:NMC}

In this section we summarize the main results from \citecompanion\ allowing to define on the class of scaling-invariant functions the normalized Markov chain $\Xt/\st$. The study of the stability of this latter chain will imply the global linear convergence of the $(1+1)$-ES with generalized one-fifth success rule.

\subsection{The (1+1)-ES as a Comparison-Based Step-Size Adaptive Randomized Search}

We remind in this section that the $(1+1)$-ES is a \acprs\ after recalling the  general definition of a step-size adaptive randomized search (\asars) and a \acprs.

A \asars\ algorithm is identified to a sequence of random vectors $(\Xt,\st)_{t \in \NNN}$ where $\Xt \in \R^{n}$ and $\st \in \Rplusstar$. The vector $(\Xt,\st)$ is the state of the algorithm at iteration $t$ and $\Omega = \R^{n} \times \Rplusstar $ is its state space. Let $\UUU$ be a subset of $\R^{m}$  that is called sampling space and $\Uspace=\UUU \times \ldots \times \UUU$ for $p \in \Nplus$. Given $(\X_{0},\sigma_{0}) \in \Omega$, the sequence $(\Xt,\st)$ is inductively defined via
$$
(\Xtt,\stt) = \F^{f}((\Xt,\st),\Ut)
$$
where $\F$ is a measurable function and $(\Ut)_{t \in \NNN}$ is an independent and identically distributed (i.i.d.) sequence of random vectors of $\Uspace$. The objective function $f$ is also an input argument to the update function $\F$, however fixed over time, hence it is denoted as upper-script of $\F$. A comparison-based \asars\ is a particular case of \asars\ where candidate solutions are (i) sampled from $(\Xt,\st)$ using a solution function $\Sol$, (ii) evaluated on the objective function and ordered. The order of the candidate solutions is then \emph{solely} used for updating the state $(\Xt,\st)$ of the algorithm. Formally, let us define first the solution function and ordering function.
\begin{definition}[$\Sol$ function]\label{def:Sol} 
%Given $\UUU$ the state space for the sampling coordinates of $\Ut$. 
A $\Sol$ function used to create candidate solutions is a measurable function mapping $\Omega \times \UUU$ into $\R^{n}$, i.e.,
$$
\Sol: \Omega \times \UUU \mapsto \R^{n} \enspace.
$$
\end{definition}
\begin{definition}[$\OOrd$ function]\label{def:Ord} The ordering function $\OOrd$ maps $\R^{p}$ to $\mathfrak{S}(p)$, the set of permutations with $p$ elements and returns for any set of real values $(f_{1},\ldots,f_{p})$ the permutation of ordered indexes. That is $\Perm = \OOrd(f_{1},\ldots,f_{p}) \in \mathfrak{S}(p)$ where
$$
f_{\Perm(1)} \leq \ldots \leq f_{\Perm(p)} \enspace.
$$
When more convenient we might denote $\OOrd((f_{i})_{i=1,\ldots,p})$ instead of $\OOrd(f_{1},\ldots,f_{p}) $. \new{When needed for the sake of clarity, we might use the notations $\OOrd^{f}$ or $\Perm^{f}$ to emphasize the dependency in $f$.} \smalltodo{I might actually not need that}
\end{definition}

Given a permutation $\Perm \in \mathfrak{S}(p)$, the star operator $*$  defines the action of $\Perm$ on the coordinates of a vector $\U=(\U^{1},\ldots,\U^{p})$ belonging to $\Uspace$ as
\begin{equation}
\begin{aligned}
\mathfrak{S}(p) \times \Uspace  \to & \Uspace \\
(\Perm, \U) \mapsto & \Perm*\U = \left(\U^{\Perm(1)},\ldots,\U^{\Perm(p)} \right)  \enspace.
\end{aligned}
\end{equation}
A \acprs\ can now be defined using a solution function $\Sol$, the ordering function and the star operator.
\begin{definition}[\acprs\ minimizing $f:\R^{\dim} \to \R$]\label{def:SSAES} Let $p \in \Nplus$ and $\Uspace=\UUU \times \ldots \times \UUU$ where $\UUU$ is a subset of $\R^{m}$. Let $p_{\U}$ be a probability distribution defined on $\Uspace$ where each $\U$ distributed according to $p_{\U}$ has a representation $(\U^{1},\ldots,\U^{p})$ (each $\U^{i} \in \UUU$). Let $\Sol$ be a solution function as in Definition~\ref{def:Sol}. Let $\GG_{1}: \Omega \times \Uspace \mapsto  \R^{n} $ and $\GG_{2}:  \Rplusstar \times \Uspace \mapsto  \Rplus$ be two mesurable mappings and denote $\GG=(\GG_{1},\GG_{2})$.

A \acprs\ is determined by  the quadruplet $(\Sol,\GG,\Uspace,p_{\U})$ from which the recursive sequence $(\Xt,\st) \in \R^{n} \times \Rplusstar$ is defined via $(\X_{0},\sigma_{0}) \in \R^{\dim} \times \Rplusstar$ and for all $t$:
\begin{align}\label{eq:sol}
\Xt^{i} & = \Sol((\Xt,\st),\Ut^{i}) \,, i=1,\ldots,p \\\label{eq:perm}
\Perm & = \SSel(f(\Xt^{1}),\ldots,f(\Xt^{p})) \in \mathfrak{S}(p)\\\label{eq:G1}
\Xtt & = \GG_{1}\left( (\Xt,\st), \Perm*\Ut \right)\\\label{eq:G2}
\stt & = \GG_{2}\left(\st,  \Perm*\Ut \right)  
%\\
%(\Xtt,\stt) &  = \GG\left((\Xt,\st),\Perm*\Ut \right) \\ & = \GG\left((\Xt,\st), \left(\Ut^{\Perm(1)}, \ldots, \Ut^{\Perm(p)} \right) \right)
\end{align}
where $(\Ut)_{t \in \NNN}$ is an i.i.d.\ sequence of random vectors on \Uspace\ distributed according to $p_{\U}$, $\OOrd$ is the ordering function as in Definition~\ref{def:Ord}.
\end{definition}

In the next lemma we state that the $(1+1)$-ES with generalized one-fifth success rule is a \acprs\ and define its different components. The proof is immediate and hence omitted.
\begin{lemma}
The $(1+1)$-ES with generalized one-fifth success rule satisfies Definition~\ref{def:SSAES} with $p=2$, $\UUU=\R^{n}$, $\Uspace=\R^{n} \times \R^{n}$. Its solution function equals to
$$
\Sol: ((\x,\sigma),\uu) \in (\R^{n} \times \Rplusstar) \times \UUU) \mapsto \x + \sigma \uu \enspace.
$$
The sampling distribution is $p_{\U}(\uu_{1},\uu_{2})=\pp(\uu_{1}) \delta_{0}(\uu_{2})$ where $\pp$
is the density of a standard multivariate normal distribution and $\delta_{0}$ is the Dirac-delta function. The update function $\GG=(\GG_{1},\GG_{2})$ equals
\begin{equation}\label{eq:Goneplusone}
\GG((\x,\sigma),\y) = 
\left( \begin{smallmatrix}   \GG_{1}((\x,\sigma),\y) \\ 
\GG_{2}(\sigma,\y)
\end{smallmatrix} \right)
=  \left( \begin{smallmatrix}   \x +  \sigma  \y^{1} \\ 
\sigma  
\left( (\factonefifth - \factonefifth ^{-1/q}) 1_{\{ \y^{1} \neq 0 \}} + \factonefifth ^{-1/q}  \right)
\end{smallmatrix} \right)  \enspace.
\end{equation}
\end{lemma}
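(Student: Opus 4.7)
The plan is to check, item by item, that each ingredient of Definition~\ref{def:SSAES} --- the integer $p$, the sampling space $\Uspace$, the distribution $p_{\U}$, the solution function $\Sol$, and the mappings $\GG_{1},\GG_{2}$ --- can be instantiated so that the generic recursion \eqref{eq:sol}--\eqref{eq:G2} reproduces exactly the $(1+1)$-ES update equations \eqref{eq:sampling}--\eqref{eq:update-ss}.

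First I would motivate the choice $p=2$. Although the $(1+1)$-ES genuinely samples only one trial point $\Xt + \st \Ut^{1}$, at each iteration it still compares this trial point against the incumbent $\Xt$. To fit the \acprs\ framework I would embed $\Xt$ as a formal second candidate by taking $\Xt^{2} = \Sol((\Xt,\st),\mathbf{0}) = \Xt$, which forces the second component of $\Ut$ to be almost surely $\mathbf{0}$ and therefore dictates $p_{\U}(\uu^{1},\uu^{2}) = \pp(\uu^{1})\,\delta_{0}(\uu^{2})$. Choosing $\Sol((\x,\sigma),\uu) = \x + \sigma \uu$ then reproduces \eqref{eq:sampling} through $\Xt^{1} = \Sol((\Xt,\st),\Ut^{1})$.

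Next I would carry out the ordering step explicitly and evaluate $\Perm*\Ut$ in the two possible outcomes. With $p=2$ one has $\Perm=\mathrm{id}$ when $f(\Xt^{1}) \leq f(\Xt)$ (the success event) and $\Perm$ equals the transposition otherwise. In the success case $\Perm*\Ut = (\Ut^{1},\mathbf{0})$, so $\y^{1}=\Ut^{1}$; in the rejection case $\Perm*\Ut = (\mathbf{0},\Ut^{1})$, so $\y^{1}=\mathbf{0}$. Substituting these into the proposed $\GG_{1}$ returns $\Xt + \st\Ut^{1}$ on success and $\Xt$ on failure, which is exactly \eqref{eq:update-mean}; substituting into the proposed $\GG_{2}$ returns $\st\factonefifth$ on success and $\st\factonefifth^{-1/q}$ on failure, which is exactly \eqref{eq:update-ss}.

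The only delicate point, which is hardly an obstacle, is that the indicator $1_{\{\y^{1}\neq 0\}}$ in \eqref{eq:Goneplusone} must correctly separate success from rejection. This works because $\Ut^{1}$ admits a Gaussian density, so $\Pr(\Ut^{1}=\mathbf{0})=0$ and the event $\{\y^{1}\neq\mathbf{0}\}$ coincides almost surely with $\{f(\Xt^{1}) \leq f(\Xt)\}$. Modulo this measure-theoretic remark, the verification is a direct substitution, matching the authors' assessment that the proof is immediate. Measurability of $\Sol$, $\GG_{1}$ and $\GG_{2}$ is inherited from the elementary nature of the expressions (affine maps, indicator of an open set, multiplication by constants), so no additional work is required on that front.
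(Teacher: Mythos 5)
Your verification is correct and is precisely the direct substitution the paper deems ``immediate'' and omits: the choice $p=2$ with $\U^{2}=\mathbf{0}$ a.s., the two cases of the permutation, and the resulting match with \eqref{eq:sampling}--\eqref{eq:update-ss}. Your measure-theoretic remark that $\{\y^{1}\neq\mathbf{0}\}$ coincides almost surely with the success event is also exactly the argument the paper itself invokes later (in Section~\ref{sec:defZ}) when deriving \eqref{eq:transitionZ}, so nothing is missing.
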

The solution and update functions associated to the $(1+1)$-ES have a specific structure that is useful for proving invariance properties of the algorithm. We state those properties in the following lemma and omit the proof which is also immediate.
\begin{lemma}\label{lem:PP}
Let $(\Sol,\GG,\Uspace,p_{\U})$ be the quadruplet associated to the \acprs\ $(1+1)$-ES with generalized one-fifth success rule. Then the following properties are satisfied:\\ For all $\x, \x_{0} \in \R^{n}$ for all $\sigma > 0$, for all $\uu \in \UUU$, $\y \in \Uspace$
\begin{align}\label{eq:ouap1}
\Sol((\x+\x_{0},\sigma),\uu) & = \Sol((\x,\sigma),\uu) + \x_{0} \\\label{eq:ouap2}
\GG_{1}((\x+\x_{0},\sigma),\y) & = \GG_{1}((\x,\sigma),\y) + \x_{0} \enspace.
\end{align}
For all $\alpha >0$, $(\x,\sigma) \in \Omega$, $\uu^{i} \in \UUU$, $\y \in \Uspace$
\begin{align}\label{eq:SIsol}
\Sol((\x,\sigma),\uu^{i}) & = \alpha \Sol \left( \left(\frac{\x}{\alpha}, \frac{\sigma}{\alpha}\right), \uu^{i}  \right) \\
\label{eq:SIG1}
\GG_{1}( (\x,\sigma),\y) & = \alpha \GG_{1}\left(\left(\frac{\x}{\alpha},\frac{\sigma}{\alpha}\right),\y\right)\\\label{eq:SIG2}
\GG_{2}(\sigma,\y) & = \alpha \GG_{2}\left(\frac{\sigma}{\alpha},\y\right)\enspace.
\end{align}
\end{lemma}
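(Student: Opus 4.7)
The plan is to verify each of the five identities by direct substitution into the explicit formulas provided in the preceding lemma: $\Sol((\x,\sigma),\uu) = \x + \sigma\uu$, together with the expressions for $\GG_{1}((\x,\sigma),\y) = \x + \sigma\y^{1}$ and for $\GG_{2}(\sigma,\y)$ that can be read off from \eqref{eq:Goneplusone}. Because every one of these functions is affine (indeed linear in the ``state'' arguments $\x$ and $\sigma$ with coefficients that depend only on $\uu$ or $\y$), the identities will reduce to one-line algebraic checks. This is why the authors themselves declare the proof immediate; the lemma's purpose is organizational rather than technical, since \eqref{eq:ouap1}--\eqref{eq:SIG2} are exactly the translation and scale invariances that underpin the reduction to the normalized chain $\Zt=\Xt/\st$ in \citecompanion.

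For the translation invariance identities \eqref{eq:ouap1} and \eqref{eq:ouap2}, I would plug $\x+\x_{0}$ in place of $\x$ and observe that the additive shift passes through untouched, because $\x$ appears as a pure additive term both in $\Sol$ and in the first component of $\GG$ in \eqref{eq:Goneplusone}. No analogue needs to be stated for $\GG_{2}$ since the step-size update does not involve the mean vector at all.

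For the scaling identities \eqref{eq:SIsol}, \eqref{eq:SIG1} and \eqref{eq:SIG2}, the key observation is that $\Sol$ and $\GG_{1}$ are positively homogeneous of degree one in the pair $(\x,\sigma)$ (with $\uu$ or $\y$ held fixed), whereas $\GG_{2}$ is positively homogeneous of degree one in $\sigma$ alone because the bracket $(\factonefifth - \factonefifth^{-1/q})1_{\{\y^{1}\neq 0\}} + \factonefifth^{-1/q}$ in \eqref{eq:Goneplusone} depends on $\y$ only. Concretely one computes $\alpha\,\Sol((\x/\alpha,\sigma/\alpha),\uu) = \alpha\bigl(\x/\alpha + (\sigma/\alpha)\uu\bigr) = \x + \sigma\uu = \Sol((\x,\sigma),\uu)$, and the verifications for $\GG_{1}$ and $\GG_{2}$ follow the same one-line pattern.

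There is no genuinely hard step, and hence no main obstacle: the lemma is a transparent consequence of the closed-form formula \eqref{eq:Goneplusone}. The only point worth care is to check separately the invariance statements for $\GG_{2}$, which have no translation counterpart, and to keep track of the fact that in \eqref{eq:SIsol} the sampling vector is written $\uu^{i}$ because in Definition~\ref{def:SSAES} the $\Sol$ function is applied coordinate-wise to the components of $\U$, whereas in \eqref{eq:SIG1}--\eqref{eq:SIG2} the full vector $\y \in \Uspace$ is passed to $\GG$.
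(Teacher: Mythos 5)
Your proof is correct and is exactly the direct substitution argument the paper has in mind when it declares the proof ``immediate'' and omits it: each identity follows in one line from the closed forms $\Sol((\x,\sigma),\uu)=\x+\sigma\uu$ and \eqref{eq:Goneplusone}, using additivity in $\x$ for \eqref{eq:ouap1}--\eqref{eq:ouap2} and degree-one homogeneity in $(\x,\sigma)$ (respectively in $\sigma$ alone for $\GG_{2}$) for \eqref{eq:SIsol}--\eqref{eq:SIG2}. Nothing further is needed.
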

\subsection{Invariances}
As a direct consequence of the fact that the $(1+1)$-ES is comparison based, it is invariant to monotonically increasing transformations of the objective function. That is, for any $g \in \Monotone$, the sequence $(\Xt,\st)_{t \in \NNN}$ optimizing $g \circ f$ or optimizing $f$ are almost surely equal (see Proposition~2.4 in \citecompanion). In addition the $(1+1)$-ES is translation and scale-invariant as detailed below. 

Translation invariance implies identical behavior on a function $h(\x)$ or any of its translated version $\x \mapsto h(\x - \xzero)$.
It is formally defined for a \asars\  using a group homomorphism from the group $(\R^{n},+)$ to the group $(\mathcal{A}(\Omega),\circ)$, set of invertible mappings from the state space $\Omega$ to itself endowed with the function composition $\circ$. More precisely, a \asars\ is translation invariant if there exists a group homomorphism $\Phi \in \Isom( (\R^{n},+), (\mathcal{A}(\Omega), \circ) )$ such that for any objective function $f$, for any $\xzero \in \R^{n}$,  for any $(\x,\sigma) \in \Omega$ and \new{for any} $\uu \in \Uspace$
\begin{equation}\label{eq:TI}
\FF^{f(\x)}((\x,\sigma),\uu) = \underbrace{[\Phi(\xzero)]^{-1}}_{\Phi(-\xzero)} \left( \FF^{f\left( \x - \xzero \right)}(\Phi(\xzero)(\x,\sigma),\uu) \right)
\enspace.
\end{equation}
The $(1+1)$-ES is translation invariant and the group homomorphism associated equals $\Phi(\x_{0})(\x,\sigma) = (\x + \xzero,\sigma)$. This property is a consequence of \eqref{eq:ouap1} and \eqref{eq:ouap2} (see Proposition~2.7 in \citecompanion). Similarly, scale-invariance that translates that an algorithm has no intrinsic notion of scale is defined via homomorphisms from the group $(\Rplusstar,.)$ (where $.$ denotes the multiplication in $\R$) to the group $ (\mathcal{A}(\Omega), \circ) $. More precisely a \asars\ is scale-invariant  if there exists an homomorphism $\Phi \in \Isom((\Rplusstar,.),(\mathcal{A}(\Omega),\circ))$ such that for any $f$, for any $\alpha > 0$, for any $(\x,\sigma) \in \Omega$ and \new{for any} $\uu \in \Uspace$
\begin{equation}\label{eq:SI}
\FF^{f(\x)}((\x,\sigma),\uu) = \underbrace{[\Phi(\alpha)]^{-1}}_{\Phi(1/\alpha)} \left( \FF^{f\left(\alpha \x \right)}(\Phi(\alpha)(\x,\sigma),\uu) \right)
\enspace.
\end{equation}
The $(1+1)$-ES is scale-invariant and the group homomorphism associated equals $\Phi(\alpha) (\x,\sigma) = (\x/\alpha,  \sigma / \alpha) $. This property is a consequence of the properties \eqref{eq:SIG1} and \eqref{eq:SIG2} (see Proposition~2.9 in \citecompanion).

\subsection{Normalized Markov Chain on Scaling-Invariant Functions}\label{sec:defZ}

A class of functions that plays a specific role for \acprs\ are scaling invariant functions defined as: for all $\scaleSI > 0$, $\x,\y \in \R^{n}$
\begin{equation}\label{eq:scaling-invariant}
f(\scaleSI(\x - \xstar))  \leq f(\scaleSI(\y - \xstar)) \Leftrightarrow f(\x-\xstar) \leq f(\y - \xstar) \enspace,
\end{equation}
where $\xstar \in \R^{n}$. The latter function is said scaling-invariant w.r.t.\ $\xstar$. A linear function or any $g \circ f$ where $f$ is a norm and $g \in \Monotone$ are scaling-invariant. Also some non quasi-convex functions are scaling-invariant. Scaling-invariant functions are essentially unimodal, formally they do not admit any strict local extrema (see Proposition~3.2 in \citecompanion).

We assume given a scaling-invariant function w.r.t. $\xstar = 0$ (w.l.o.g.). Then, for a translation and scale-invariant \acprs\ defined by the quadruplet $(\Sol,\GG,\Uspace,p_{\U})$ where scale-invariance is a consequence of the properties \eqref{eq:SIsol}, \eqref{eq:SIG1} and \eqref{eq:SIG2}, the normalized sequence $(\Xt/\st)_{t \in \NNN}$ is an homogeneous Markov chain (Proposition~4.1 in \citecompanion). This Markov chain can be defined independently of $(\Xt,\st)$ provided $\Z_{0} = \frac{\X_{0}}{\sigma_{0}}$ via

\begin{align}
\Zt^{i} & = \Sol( (\Zt,1),\Ut^{i}), i=1,\ldots,p \\
\Perm & = \SSel(f(\Zt^{1}),\ldots,f(\Zt^{p}))\\
%\Ztt & = \frac{\GG_{1}((\Zt,1),\Perm*\Ut)}{\GG_{2}(1,\Perm*\Ut)}
\Ztt & = \G(\Zt,\Perm*\Ut) 
%\frac{\GG_{1}((\Zt,1),\Perm*\Ut)}{\GG_{2}(1,\Perm*\Ut)}
\end{align}
where the transition function $\G$ equals for all $\z \in \R^{n}$ and $\y \in \Uspace$
\begin{equation}
G(\z,\y) = \frac{\GG_{1}((\z,1),\y)}{\GG_{2}(1,\y)}
%= \frac{\GG_{1}((\z,1),\y)}{\etastar(\y)}
 \enspace .
\end{equation}
According to the previous equation, the transition function $\G$ for the normalized chain $(\Zt=\frac{\Xt}{\st})_{t \in \NNN}$ associated to the $(1+1)$-ES on scaling-invariant functions is given by
\begin{equation*}
G(\z,\y)=\frac{\z +   \y^{1}}{ (\factonefifth - \factonefifth ^{-1/q}) 1_{\{ \y^{1} \neq 0 \}} + \factonefifth ^{-1/q} }
\end{equation*}
where the selected step $\y=(\y^{1},\y^{2})$ is according to the f-ranking of the solutions $\z+\uu^{1}$ and $\z+\uu^{2}$, 
i.e.,
$
f(\z + \y^{1}) \leq f(\z + \y^{2})
$.
However, since $\uu^{2} = 0$ (because $p_{\U}(\uu^{1},\uu^{2})=p_{\N}(\uu^{1})\delta_{0}(\uu^{2})$), $\y^{1} = \uu^{1} 1_{\{f(\z+\uu^{1}) \leq f(\z)\}} $. In addition since $\Ut^{1} \sim \N(0,\Id)$, the event ${\{ \Yt^{1} \neq 0 \}}$ is almost surely equal to the event $\{ \Yt^{1} = \Ut^{1} \}$ and hence almost surely equal to the event ${\{f(\Zt + \Ut^{1})\leq f(\Zt) \}}$.
Overall the Markov chain $(\Zt)_{t\in \NNN}$ satisfies $\Z_{0} = \frac{\X_{0}}{\sigma_{0}} $ and  given $(\U_{t}^{1})_{t \in \NNN}$ i.i.d with $\U_{t}^{1} \sim \N(0,\Id)$ 
%the recursive equation satisfied by $\Zt$ is 
\begin{equation}\label{eq:transitionZ}
\boxed{
\Ztt = \frac{\Zt +  \Ut^{1} 1_{\{f(\Zt + \Ut^{1})\leq f(\Zt) \}}   }{ (\factonefifth - \factonefifth ^{-1/q}) 1_{\{f(\Zt + \Ut^{1})\leq f(\Zt) \}}  + \factonefifth ^{-1/q}  } \enspace. }
\end{equation}
Following \citecompanion, we introduce the notation $\etastar$ for the step-size change, i.e.,
\begin{equation}\label{eq:sschangeETA}
\etastar = (\factonefifth - \factonefifth^{-1/q}) 1_{\{ f(\Xt^{1}) \leq f(\Xt) \}} + \factonefifth^{-1/q} 
%= 
%(\factonefifth - \factonefifth^{-1/q}) 1_{\{f(\Zt + \Ut^{1})\leq f(\Zt) \}} + \factonefifth^{-1/q}
\end{equation}
and remind that on scaling-invariant functions, the step-size change starting from $(\Xt,\st)$ is the same as the step-size change starting from $(\Zt,1)$ (see Eq.~(4.7) in \citecompanion) such that 
\begin{equation}\label{eq:ping}
\Ztt = \frac{\Zt +  \Ut^{1} 1_{\{f(\Zt + \Ut^{1})\leq f(\Zt) \}}}{\etastar} \enspace .
\end{equation}

%%%%%%%
%
% SCALE-INVARIANT FUNCTIONS
\subsection{Objective Function Assumptions}\label{sec:scaling-inv}

We consider scaling-invariant functions formally defined by \eqref{eq:scaling-invariant} where in addition we assume that $\xstar=0$. This can be done w.l.o.g.\ because the $(1+1)$-ES is translation invariant. This assumption is sufficient to build the normalized Markov chain $(\Zt)_{t \in \NNN}$ (see Section~\ref{sec:defZ}). However for studying its stability, we will make further hypothesis on $f$. 
\del{We state and explain in this section those main hypothesis.}

We will consider a particular class of scaling-invariant functions, namely positively homogeneous functions. Formally a positively homogeneous function with degree $\alpha$ satisfies the following definition.
%% DEFINITION
\begin{definition}\label{def:poshf}[Positively homogeneous functions]
A function $f:\R^\dim \mapsto \R$ is said positively  homogeneous with degree $\alpha$ if
for all $\rho >0$ and for all $\x \in \R^\dim$, $f(\rho \x) = \rho^{\alpha} f(\x)$. 
\end{definition}
%% END DEFINITION

Remark that positive homogeneity is not always preserved if $f$ is composed by a non-decreasing transformation. We will in addition make the following assumptions on the objective function:

\begin{assumption}\label{ass:f}
The function $f:\R^{\dim} \to \R$ is homogeneous with degree $\alpha$ and $f(\x) > 0$ for all $\x \neq 0$. 
\end{assumption}

This assumption implies that the function $f$ has a \emph{unique optimum} located w.l.o.g.\ in $0$ (if the optimum $\xstar$ is not in $0$, consider $\tilde f = f(\x - \xstar)$) as seen in the next lemma point (i). Remark that with this assumption, we exclude linear functions. 

%Other properties of positive homogeneous functions satisfying Assumptions~\ref{ass:f} are also derived in the next lemma. 
In the next lemma, we state some properties of positive homogeneous functions satisfying Assumptions~\ref{ass:f}. We denote for $c \geq 0$, $\LLL_{c} = \{ \x, f(\x) \leq c \}$ the sublevel set of $f$ associated to $c$  and $\LL_{c}= \{ \x, f(\x) = c\}$ its level set. The hypersphere surface of radius $r$ is denoted $\Sphere_{r}$, that is $\Sphere_{r}=\{ \x , \| \x \| = r \}$.

\begin{lemma}\label{lem:propf}
Let $f$ be an homogeneous function with degree $\alpha >0$ and $f(\x) > 0$ for all $\x \neq 0$ and $f(\x)$ finite for every $\x \in \R^\dim$.  Then the following holds:
\begin{itemize}
%\item Then for all $c \geq 0$, $\LLL_{c}$ is bounded.
\item[(i)] $\lim_{t \to 0} f(t \x)=0$ and assuming that $f(\zero)=0$, for all $\s \neq 0 $, the function $f_{\s}: t \in [0, + \infty[ \mapsto f(t \s)$ is continuous, strictly increasing and converges to $+ \infty$ when $t$ goes to $+ \infty$.
\item[(ii)] If $f$ is lower semi-continuous, then $\LLL_{c}$ is compact.
%\item[(iii)] If $f$ is continuous on $\LL_{c}$, then $\LL_{c}$ is closed and hence compact
%\item[(iv)] If $f$ is continuous in $\LL_{1}$, then $f$ is continuous in $\LL_{c}$ for all $c$. 
\end{itemize}
\end{lemma}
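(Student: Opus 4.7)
For part (i), the approach is to exploit positive homogeneity directly: for any $\x \in \R^{\dim}$ and $t > 0$ one has $f(t\x) = t^{\alpha} f(\x)$ with $\alpha > 0$, so $\lim_{t \to 0^{+}} f(t\x) = f(\x)\cdot\lim_{t \to 0^{+}} t^{\alpha} = 0$. Combined with the added assumption $f(\zero)=0$, this gives continuity of $f_{\s}$ at $0$. For $t > 0$ and fixed $\s \neq 0$, $f_{\s}(t) = t^{\alpha} f(\s)$, which as a function of $t$ is continuous and, since $f(\s) > 0$ by Assumption~\ref{ass:f} and $\alpha > 0$, strictly increasing with $\lim_{t \to +\infty} f_{\s}(t) = +\infty$. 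This disposes of part (i) essentially by inspection.

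For part (ii), I would prove closedness and boundedness separately. Closedness of $\LLL_{c}$ is a standard consequence of lower semi-continuity: the preimage $\{\x : f(\x) \leq c\}$ of a lower semi-continuous function is closed. For boundedness, the plan is to argue by contradiction, assuming a sequence $(\x_{k}) \subset \LLL_{c}$ with $\|\x_{k}\| \to +\infty$. Writing $\x_{k} = \|\x_{k}\|\,\s_{k}$ with $\s_{k} \in \Surf_{1}$ and using positive homogeneity, $f(\x_{k}) = \|\x_{k}\|^{\alpha} f(\s_{k})$. The key auxiliary fact is then that $m := \inf_{\s \in \Surf_{1}} f(\s) > 0$: indeed, lower semi-continuity of $f$ on the compact set $\Surf_{1}$ ensures that this infimum is attained, and Assumption~\ref{ass:f} forces $f > 0$ on $\Surf_{1}$. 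Hence $f(\x_{k}) \geq m \|\x_{k}\|^{\alpha} \to +\infty$, contradicting $f(\x_{k}) \leq c$.

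The step I expect to carry the real content is the lower bound $m > 0$ on the unit sphere, since it is the only place where lower semi-continuity is genuinely needed (closedness follows almost tautologically). Without lower semi-continuity, the infimum on $\Surf_{1}$ might not be attained and could be zero, even though $f$ is strictly positive everywhere on $\Surf_{1}$; this would break the scaling argument. Everything else in the lemma is essentially an exercise in applying the identity $f(t\x) = t^{\alpha} f(\x)$ together with $\alpha > 0$ and $f > 0$ on $\Rnstar$.
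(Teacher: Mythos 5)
Your proof is correct and follows essentially the same route as the paper: part (i) by direct inspection of $f_{\s}(t)=t^{\alpha}f(\s)$, and part (ii) by combining closedness from lower semi-continuity with the attained (hence strictly positive) infimum $m$ of $f$ on the unit sphere and the homogeneity bound $f(\x)\geq m\|\x\|^{\alpha}$. The only cosmetic difference is that you derive boundedness by contradiction along a sequence, while the paper phrases the same estimate as an inclusion of sublevel sets into balls ($\LLL_{m\sigma^{\alpha}}\subset \Ball_{r\sigma}$); the content is identical.
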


{\em Proof.}
%\begin{proof}
(i) Since $f(t \x) = t^{\alpha} f(\x)$, fixing $\x$ and taking the limit for $t$ to zero we have that $\lim_{t \to 0} f(t \x)=0$. For any $\s$, the function $f_{\s}$ satisfies $f_{\s}(t) = t^{\alpha} f(\s)$. It is thus continuous on $[0,+\infty[$, strictly increasing and converges to infinity when $t$ goes to infinity. \\
(ii) Since $f$ is lower semi continuous, the inverse image of sets of the form $(-\infty,r]$ are closed sets. Hence $\LLL_{c} = f^{-1}((-\infty,c])$ is closed. Let us consider the surface $\Sphere_{r}$ for $r>0$. Since $f$ is lower semi-continuous, there exists $\x_{0} \in \Sphere_{r} $ such that $\inf_{\x \in \Sphere_{r}} f(\x) = f(\x_{0})$. Since $f(\x) > 0$ for $\x \neq 0$, $f(\x_{0}) : = m \neq 0$. Hence we have $\LLL_{m} \subset \Ball_{r}$. Because $f$ is homogeneous with degree $\alpha$, we have thus $\LLL_{m \sigma^{\alpha}} \subset \Ball_{r \sigma}$ for all $\sigma >0$. Hence, for any $c$ we can include $\LLL_{c}$ is a ball which proves that it is bounded and hence compact. \hfill \endproof
%\end{proof}
%%

A positively homogeneous function satisfies for all $\x \neq 0$
\begin{equation}\label{eq:pos-hom-relation}
f(\x) = \| \x \|^{\alpha} f \left( \x / \| \x \| \right) \enspace.
\end{equation}
From this latter relation it follows that $f$ is continuous on $\Sphere_{1}=\{ \x \in \R^{\dim}, \| \x \|=1 \}$ if and only if $f$ is continuous on $\Rnstar$. 
\mathnote{Formally we use the continuity of $\z \to \| \z \|$, of $\z \to \frac{\z}{\| \z \|}$ and of the product.}
Assuming continuity on $\Rnstar$, we denote in the sequel $m$ the minimum of $f$ on $\Sphere_{1}$ and $M$ its maximum, that is
\begin{align}\label{eq:min-max}
m & =\min_{\z \in \Sphere_{1}}f(\z) \\\label{eq:min-max2}
M & = \max_{\z \in \Sphere_{1}} f(\z) \enspace.
\end{align}
The following lemma will be used several times when investigating the stability of the normalized Markov chain $\Z = (\Zt)_{t \in \NNN}$.
\begin{lemma}\label{lem:too}
Let $f$ satisfy Assumptions~\ref{ass:f} and $f$ be continuous on $\Sphere_{1}$. Then for all $\z \neq 0$
\begin{equation}\label{eq:LB-UB}
\| \z \| m^{1/\alpha} \leq f(\z)^{1/\alpha} \leq \| \z \| M^{1/\alpha}  \enspace,
\end{equation}
where $m$ and $M$ are defined in \eqref{eq:min-max} and \eqref{eq:min-max2}.
Hence, $f(\z) \to 0 $ when $\| \z \| \to 0$, $f(\z) \to \infty$ when $\| \z \|$ goes to $\infty$ and  $|\ln \| \z \|| f(\z)^{1/\alpha} \to 0$ when $\| \z \| \to 0$.
\end{lemma}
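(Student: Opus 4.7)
The strategy is to exploit the positive homogeneity of $f$ together with continuity on the compact set $\Sphere_{1}$. The key identity is \eqref{eq:pos-hom-relation}: for every $\z \neq 0$,
$$
f(\z) = \|\z\|^{\alpha}\, f\!\left(\frac{\z}{\|\z\|}\right),
$$
where $\z/\|\z\| \in \Sphere_{1}$. Since $f$ is assumed continuous on $\Sphere_{1}$ (a compact set) the extrema $m$ and $M$ defined in \eqref{eq:min-max} and \eqref{eq:min-max2} are attained, so $m \leq f(\z/\|\z\|) \leq M$. Multiplying by $\|\z\|^{\alpha}$ and taking the $\alpha$-th root (which is allowed since $\alpha > 0$ and all quantities involved are nonnegative) yields \eqref{eq:LB-UB}.

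A subtle but crucial point is that $m > 0$. This follows from Assumption~\ref{ass:f}: since $f(\x) > 0$ for every $\x \neq 0$, and $f$ is continuous on $\Sphere_{1}$ (with $\zero \notin \Sphere_{1}$), $f$ attains its strictly positive infimum on the compact set $\Sphere_{1}$. I would state this explicitly, as it is needed to conclude $f(\z) \to \infty$ from the lower bound. The value $M$ is obviously finite and nonnegative under the same continuity/compactness argument.

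The three limiting statements then follow directly from \eqref{eq:LB-UB}. First, the upper bound gives $f(\z) \leq M \|\z\|^{\alpha}$, whose right-hand side vanishes as $\|\z\| \to 0$, so $f(\z) \to 0$. Second, the lower bound $f(\z) \geq m \|\z\|^{\alpha}$ with $m > 0$ diverges as $\|\z\| \to \infty$, so $f(\z) \to \infty$. Third, combining the upper bound with the elementary fact that $t|\ln t| \to 0$ as $t \to 0^{+}$ gives
$$
0 \leq |\ln \|\z\|| \, f(\z)^{1/\alpha} \leq M^{1/\alpha}\, \|\z\| \, |\ln \|\z\||  \xrightarrow[\|\z\| \to 0]{} 0,
$$
and a squeeze argument concludes.

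There is no real obstacle here: the proof is essentially a one-line application of positive homogeneity followed by bookkeeping. The only care required is to justify the strict positivity of $m$ (which relies on the conjunction of $f > 0$ off the origin, continuity on $\Sphere_{1}$, and compactness of $\Sphere_{1}$) and to invoke the standard limit $t|\ln t| \to 0$ for the last assertion.
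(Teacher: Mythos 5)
Your proof is correct and follows essentially the same argument as the paper: the homogeneity identity $f(\z)=\|\z\|^{\alpha}f(\z/\|\z\|)$, strict positivity and finiteness of $m$ and $M$ via continuity on the compact sphere, and the standard limit $t|\ln t|\to 0$ for the final assertion. No gaps to report.
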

\begin{proof}
By homogeneity, for all $\z \neq 0$, we have $f(\z)=f\left(\| \z \| \frac{\z}{\|\z\|} \right) = \| \z \|^{\alpha}f\left(\frac{\z}{\|\z\|} \right)$. Since $f$ is continuous on the compact $\Sphere_{1}$, $m = \min_{\z \in \Surf_{1}}f(\z) > 0$ and $M =\max_{\z \in \Surf_{1}} f(\z) > 0$ and $M < \infty$.  We hence have
$$
\| \z \|^{\alpha} \underbrace{\min_{\z \in \Sphere_{1}}f(\z)}_{m}  \leq f(\z) \leq \| \z \|^{\alpha} \underbrace{\max_{\z \in \Sphere_{1}} f(\z)}_{M}
$$
and thus $\| \z \| m^{1/\alpha} \leq f(\z)^{1/\alpha} \leq \| \z \| M^{1/\alpha} $.
Since $\|\z\| |\ln \| \z \||  \to 0$ when $\|\z \| \to 0$, we hence obtain that $|\ln \| \z \|| f(\z)^{1/\alpha} \to 0$.
\end{proof}

The following lemma is a consequence of the previous one and will be useful in the sequel.
\begin{lemma}\label{lem:si}
Let $f$ satisfy Assumptions~\ref{ass:f} and $f$ be continuous on $\Surf_{1}$, for all $\rho > 0$, the ball centered in $0$ and of radius $\rho$ is included in the sublevel set of degree $\rho^{\alpha} M$, i.e.,
\begin{equation}\label{eq:bart1}
\BB(0,\rho) \subset \LLL_{\rho^{\alpha} M} \enspace.
\end{equation}
For all $K > 0$, the sublevel set of degree $K$ is included into the ball centered in $0$ and of radius $(K/m)^{\alpha}$, i.e.,
\begin{equation}\label{eq:bart2}
\LLL_{K} \subset \BB(0,{(K/m)^{\alpha}}) \enspace.
\end{equation}
\end{lemma}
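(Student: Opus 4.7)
The plan is to prove both inclusions as direct consequences of the double inequality \eqref{eq:LB-UB} established in Lemma~\ref{lem:too}, treating the case $\z = 0$ separately when needed. Since $m$ and $M$ are strictly positive finite constants (continuity of $f$ on the compact $\Sphere_1$ together with $f > 0$ on $\Rnstar$), the bounds make sense.

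For the first inclusion, I would take an arbitrary $\z \in \BB(0,\rho)$, i.e.\ $\|\z\| \leq \rho$. If $\z = 0$, then $f(\z) = 0 \leq \rho^\alpha M$ by Assumption~\ref{ass:f} and Lemma~\ref{lem:propf}~(i). Otherwise, the upper inequality in \eqref{eq:LB-UB} gives $f(\z)^{1/\alpha} \leq \|\z\|\, M^{1/\alpha} \leq \rho\, M^{1/\alpha}$, and raising to the power $\alpha$ yields $f(\z) \leq \rho^\alpha M$. Hence $\z \in \LLL_{\rho^\alpha M}$, proving \eqref{eq:bart1}.

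For the second inclusion, I would take $\z \in \LLL_K$, i.e.\ $f(\z) \leq K$. The case $\z = 0 \in \BB(0,(K/m)^{1/\alpha})$ is trivial. For $\z \neq 0$, the lower inequality in \eqref{eq:LB-UB} gives $\|\z\|\, m^{1/\alpha} \leq f(\z)^{1/\alpha} \leq K^{1/\alpha}$, so $\|\z\| \leq (K/m)^{1/\alpha}$, placing $\z$ inside the ball of radius $(K/m)^{1/\alpha}$. (The exponent written in \eqref{eq:bart2} looks like a typo for $1/\alpha$; the argument above is what is actually needed downstream when one wants sublevel sets to be contained in balls.)

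There is no real obstacle here: the whole content of the lemma is packaged in Lemma~\ref{lem:too}, which itself rests on the positive homogeneity of $f$ and the compactness of $\Sphere_1$. The lemma is essentially a restatement of that double inequality in the geometric language of balls and sublevel sets, so the proof reduces to applying $\min$ and $\max$ over $\Sphere_1$ coordinate by coordinate.
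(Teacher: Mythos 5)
Your proof is correct and follows essentially the same route as the paper's: both inclusions are read off directly from the two-sided bound $m\|\z\|^{\alpha} \leq f(\z) \leq M\|\z\|^{\alpha}$ of Lemma~\ref{lem:too}, with your treatment of $\z=0$ a harmless extra. Your observation about the exponent in \eqref{eq:bart2} is also right: the radius should be $(K/m)^{1/\alpha}$, and the paper's own proof carries the same slip.
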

\begin{proof}
From Lemma~\ref{lem:too} we have that for all $\z$, $m \| \z \|^{\alpha} \leq f(\z) \leq M \| \z \|^{\alpha}  $. Let $\z \in \BB(0,\rho)$, then $f(\z) \leq M \rho^{\alpha}$, i.e., $\z \in \LLL_{\rho^{\alpha} M}$. Let $\z \in \LLL_{K}$, then $f(\z) \leq K$ and hence $\| \z \| \leq (K/m)^{\alpha} $.
\end{proof}

Last, we remind the Euler's homogeneous function theorem.
\begin{theorem}[Euler's homogeneous function theorem]
Suppose that the function $f: \R^{n} \backslash \{0\} \mapsto \R$ is continuously differentiable. Then $f$ is positive homogeneous of degree $\alpha$ if and only if
$$
    \mathbf{x} \cdot \nabla f(\mathbf{x})= \alpha f(\mathbf{x}) \enspace. 
 $$
\end{theorem}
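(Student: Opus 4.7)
The plan is to prove both directions of the equivalence by reducing to a one-dimensional calculus exercise along the ray $\rho \mapsto \rho \x$ for fixed $\x \neq 0$.

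For the forward direction, I would start from the homogeneity identity $f(\rho \x) = \rho^{\alpha} f(\x)$ valid for all $\rho > 0$ and $\x \neq 0$. Differentiating both sides with respect to $\rho$, the chain rule gives $\x \cdot \nabla f(\rho \x)$ on the left, while the right-hand side gives $\alpha \rho^{\alpha - 1} f(\x)$. Evaluating at $\rho = 1$ immediately yields $\x \cdot \nabla f(\x) = \alpha f(\x)$. Continuous differentiability of $f$ on $\R^n \setminus \{0\}$ is exactly what is needed to legitimize the chain rule along the half-ray $\{\rho \x : \rho > 0\}$ which stays in the domain as long as $\x \neq 0$.

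For the converse, fix $\x \neq 0$ and define $h(\rho) = \rho^{-\alpha} f(\rho \x)$ on $\Rplusstar$. Differentiating gives
\begin{equation*}
h'(\rho) = -\alpha \rho^{-\alpha - 1} f(\rho \x) + \rho^{-\alpha} \x \cdot \nabla f(\rho \x).
\end{equation*}
Applying the hypothesis $\mathbf{y} \cdot \nabla f(\mathbf{y}) = \alpha f(\mathbf{y})$ at the point $\mathbf{y} = \rho \x$ yields $\rho \x \cdot \nabla f(\rho \x) = \alpha f(\rho \x)$, i.e.\ $\x \cdot \nabla f(\rho \x) = (\alpha / \rho) f(\rho \x)$. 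Substituting shows that $h'(\rho) \equiv 0$, so $h$ is constant on $\Rplusstar$. Since $h(1) = f(\x)$, we conclude $f(\rho \x) = \rho^{\alpha} f(\x)$ for every $\rho > 0$, which is positive homogeneity of degree $\alpha$.

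There is no real obstacle here: both directions are classical one-variable computations and the $C^1$ hypothesis is used only to justify the chain rule. The only minor care needed is to stay inside the domain $\R^n \setminus \{0\}$, which is automatic because $\x \neq 0$ and $\rho > 0$ imply $\rho \x \neq 0$.
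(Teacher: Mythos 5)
Your proof is correct: both directions are the standard textbook argument (differentiate the homogeneity identity in $\rho$ and set $\rho=1$; conversely show $\rho \mapsto \rho^{-\alpha} f(\rho \x)$ has vanishing derivative), and the $C^1$ hypothesis together with $\rho\x \neq 0$ for $\x\neq 0$, $\rho>0$ is exactly what justifies the computations. The paper itself states Euler's theorem only as a classical reminder without giving a proof, so there is nothing to compare against; your argument fills that in correctly.
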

\noindent This theorem implies that if $f$ is positively homogeneous and continuously differentiable, if $f(\x) > 0$ for $\x \neq 0$ (i.e., Assumption~\ref{ass:f}), then 
\begin{equation}\label{prop-grad}
\nabla f(\x) \neq 0 \mbox{  for all  } \x \neq 0 \enspace.
\end{equation}

\smalltodo{check among the additional assumptions I use in the stability section, which one I want to include here.}

\subsection{State Space for the Normalized Markov Chain}

The state-space for the normalized Markov chain $\Z=(\Zt)_{t\in\NNN}$ is {\it a priori} $\R^{\dim}$. However if we start from $\Z_{0}=0$, we will stay in $0$ forever, i.e., $\Zt=0$ for all $t$. This is due to the fact that the $(1+1)$-ES cannot accept worse solutions and $0$ is the global optimum of $f$. This would then preclude the chain $\Z$ to be irreducible w.r.t. to a non-singular measure. We therefore exclude $0$ from the state space that is now equal to $\ZZ= \R^{\dim}_{\neq}$.

\todo{DO we want that: Remark that it amounts to consider the function $\tilde{f}$ such that $\tilde{f}(\xstar)=+\infty$ instead of $f$. Both $f$ and $\tilde{f}$ are equal almost everywhere and have the same essential infimum. Take initialization $\Z_{0}$ in $\ZZ$.}

\section{Study of the Normalized Chain}\label{sec:stability}

We study in this section different properties of the homogeneous Markov chain $\Z=(\Zt)_{t \in \NNN}$ defined in Section~\ref{sec:defZ}. Those properties will imply linear convergence of the $(1+1)$-ES as we will see in Section~\ref{sec:linear-convergence}. We start in the next section by expressing the transition kernel of the Markov chain.

\subsection{Transition Probability Kernel}

We follow standard notations and terminology for a time homogeneous Markov chain $(\Zt)_{t \in \NNN}$ on a topological space $\ZZ$. The Borel sets of $\ZZ$ are denoted $\B(\ZZ)$. A kernel $T$ is any function on $\ZZ \times \B(\ZZ)$ such that $T(.,A)$ is measurable for all $A \in \B(\ZZ)$ and $T(\z,.)$ is a measure for all $\z \in \ZZ$. The transition probability kernel for $(\Zt)_{t \in \NNN}$ is a kernel $P$ such that $P(.,A)$ is a non-negative measurable function for all $A \in \B(\ZZ)$ and the measure $P(\z,.)$ for all $\z$ is a probability measure. It is defined as
$$
P(\z,A) = P_{\z}(\Z_{1} \in A) \enspace,
$$
where $P_{\z}$ denotes the probability law of the chain under the initial condition $\Z_{0} = \z$. Similarly $E_{\z}$ denotes the expectation of the chain under the initial condition $\Z_{0} = \z$. If a probability $\mu$ on $(\ZZ,\B(\ZZ))$ is the initial distribution, the probability law and expectation under $\mu$ are denoted $P_{\mu}$ and $E_{\mu}$.
The \emph{n-step transition probability law} is defined iteratively by setting $P^{0}(\z,A)=\delta_{\z}(A)$ and for $t \geq 1$, inductively by
$$
P^{t}(\z,A)=\int P(\z,d\y)P^{t-1}(\y,A) \enspace.
$$
The relation $P^{t}(\z,A) = P_{\z}( \Z_{t} \in A ) $ holds. With an abuse of notations similar to \cite[p~56]{Tweedie:book1993}, we will also for instance denote $\Pr(\N \in A) $ or $\Pr(f(\z+\N) > f(\z))$ for the probability of the events $\{ \N \in A \}$, $\{ f(\z+\N) > f(\z) \}$ (where $\N$ will typically be a standard normal multivariate distribution) without specifically defining the space where $\N$ exists which could be the space where $\Z$ is defined or another space. Similarly $E[1_{\{ f(\z+\N) > f(\z) \}}]$ will be used for the expectation of the random variable $1_{\{ f(\z+\N) > f(\z) \}}$.

We derive in the next proposition an expression for the transition kernel of $(\Zt)_{t \in \NNN}$ when $f$ is a scaling-invariant function.

%% BEGIN PROPOSITION
\begin{proposition}\label{prop:kernelplus}
Let $f:\R^{n} \mapsto \R$ be a scaling-invariant function and let $\Z$ be the Markov chain defined in \eqref{eq:transitionZ}. Its transition probability kernel is given for all  $\z \in \ZZ= \R^{\dim}_{\neq}$ and $A \in \B(\ZZ)$ by
\begin{align}
P(\z,A)& = \int 1_{A}(\uu) \qq(\z,\uu)  d \uu
+ 1_{A}(\z \factonefifth^{1/q}) \Pr(f(\z+\N) > f(\z) )
\end{align}
where $\qq(\z,\uu)=1_{\{f(\uu) \leq f(\z/\factonefifth) \}}(\uu) \pp(\factonefifth \uu - \z) \factonefifth $ with $\pp$ the density of a standard multivariate normal distribution and $\N \sim \Normal$.
\end{proposition}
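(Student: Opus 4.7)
The plan is to start from the recursion \eqref{eq:transitionZ}, split the transition into the two possible regimes (success and failure), and then perform an explicit change of variables in the success case using the scaling invariance of $f$.

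First, I would fix $\z \in \ZZ$ and $A \in \B(\ZZ)$ and write
\begin{equation*}
P(\z,A) = \Pr_{\z}(\Z_1 \in A) = E\bigl[1_A(\Z_1) 1_{\{f(\z+\N) \leq f(\z)\}}\bigr] + E\bigl[1_A(\Z_1) 1_{\{f(\z+\N) > f(\z)\}}\bigr]
\end{equation*}
where $\N \sim \Normal$ plays the role of $\Ut^1$. On the failure event, \eqref{eq:transitionZ} gives $\Z_1 = \z / \factonefifth^{-1/q} = \z \factonefifth^{1/q}$, which is a deterministic value; hence the second term reduces to $1_A(\z \factonefifth^{1/q}) \Pr(f(\z+\N) > f(\z))$, matching the second term in the announced formula.

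On the success event, \eqref{eq:transitionZ} gives $\Z_1 = (\z + \N)/\factonefifth$, so the success contribution reads
\begin{equation*}
\int_{\R^n} 1_A\!\left(\frac{\z+\uu}{\factonefifth}\right) 1_{\{f(\z+\uu) \leq f(\z)\}} \pp(\uu)\, d\uu.
\end{equation*}
I would then perform the change of variables $\uu \mapsto \vv := (\z+\uu)/\factonefifth$, equivalently $\uu = \factonefifth \vv - \z$, whose Jacobian determinant contributes the factor $\factonefifth^{n}$. For the indicator, I would invoke the scaling invariance of $f$ (with $\xstar=0$): the equivalence $f(\factonefifth \vv) \leq f(\z) \Leftrightarrow f(\vv) \leq f(\z/\factonefifth)$ follows directly from \eqref{eq:scaling-invariant} applied with $\scaleSI = \factonefifth$, $\x = \vv$, $\y = \z/\factonefifth$. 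This rewrites the success contribution as
\begin{equation*}
\int_{\R^n} 1_A(\vv)\, 1_{\{f(\vv) \leq f(\z/\factonefifth)\}} \pp(\factonefifth \vv - \z)\, \factonefifth^{n}\, d\vv,
\end{equation*}
which is exactly the first term with $\qq(\z,\vv) = 1_{\{f(\vv) \leq f(\z/\factonefifth)\}} \pp(\factonefifth \vv - \z) \factonefifth^{n}$ (the scalar $\factonefifth$ in the stated $\qq$ should read $\factonefifth^{n}$, the Jacobian of a multiplicative rescaling in $\R^n$).

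There is no real mathematical obstacle here; the argument is a direct computation. The only points requiring care are (i) confirming that the failure branch yields the deterministic point $\z \factonefifth^{1/q}$ so that its contribution separates cleanly from the integral term, and (ii) invoking \eqref{eq:scaling-invariant} with the correct scalar to transport the level-set condition from $\z+\uu$ to $\vv$. Assembling the two branches yields the announced expression for $P(\z, A)$.
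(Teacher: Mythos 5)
Your proof is correct and follows essentially the same route as the paper's own argument: decompose $P(\z,A)$ over the success and failure events, note that the failure branch lands deterministically on $\z\gamma^{1/q}$, and perform the change of variables $\uu = \gamma\vv - \z$ in the success integral, with scaling invariance transporting the indicator $1_{\{f(\gamma\vv)\le f(\z)\}}$ into $1_{\{f(\vv)\le f(\z/\gamma)\}}$. Your remark about the Jacobian is well taken: the change of variables in $\R^{n}$ contributes a factor $\gamma^{n}$, whereas the paper's statement and proof carry only a factor $\gamma$, so the density $\qq(\z,\uu)$ should indeed read $1_{\{f(\uu)\le f(\z/\gamma)\}}(\uu)\,\pp(\gamma\uu-\z)\,\gamma^{n}$ (a typo in the paper that does not affect any subsequent use of the kernel).
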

%% END PROPOSITION

{\em Proof.}
Given $\Z_{0} = \z$, and $\U_{0}^{1} = \N$ where $\N \sim \N(0,\Id)$, $\Z_{1}$ satisfies
$
\Z_{1} = \frac{\z +  \N 1_{\{ f( \z + \N) \leq f(\z) \}}}{(\factonefifth - \factonefifth^{-1/q})1_{\{f(\z+ \N) \leq f(\z) \}} + \factonefifth^{-1/q}} 
$.
Hence, the transition probability kernel satisfies $P(\z,A)=$
$$
\Pr \left( \frac{\z +  \N}{\factonefifth} \in A \,\, \cap \{f(\z+ \N) \leq f(\z)  \} \right) + \Pr \left( \frac{\z}{\gamma^{-1/q}} \in A \cap \{f(\z+ \N) > f(\z)  \} \right) \enspace,
$$
and thus satisfies
\begin{align*}
P(\z,A)&= \int 1_{A} \left( \frac{\z + \uu}{\factonefifth} \right) 1_{\{f(\z+\uu) \leq f(\z) \}}(\uu) \pp(\uu) d\uu
+ 1_{A}(\z \factonefifth ^{1/q})  \Pr(f(\z+\N) > f(\z) )\\
& = \int 1_{A}(\bar \uu) \underbrace{1_{\{f(\factonefifth \bar \uu) \leq f(\z) \}}(\bar \uu) \pp(\factonefifth \bar \uu - \z) \factonefifth}_{\qq(\z,\bar \uu)} d \bar \uu
+ 1_{A}(\z \factonefifth ^{\frac1q}) \Pr(f(\z+\N) > f(\z) ). \endproof
\end{align*}

\mathnote{I take out this part for the submission as length start to be an issue and I do not need the Feller property for the paper. Note that for proving the weak Feller property (would need to be checked carrefully) the cited proposition does not apply directly. I however need to do an ad-hoc proof, showing that the points of discontinuity in $\n$ are not problematic to apply the dominated function theorem.}
\del{The transition probability kernel $P$ acts on bounded functions through the mapping
$$
%h \in \mathcal{C}(\ZZ) \mapsto 
Ph(\z) = \int P(\z,d \y) h(\y), \z \in \ZZ \enspace.
$$
For locally compact separable metric topological space $\ZZ$, the class of bounded continuous functions from $\ZZ$ to $\R$ is denoted  $\mathcal{C}(\ZZ)$. A chain $\Z$ is said weak Feller if $P$ maps  $\mathcal{C}(\ZZ)$ into $\mathcal{C}(\ZZ)$ and strong Feller if it maps bounded functions to  $\mathcal{C}(\ZZ)$. The Markov chain associated to the  \plusES-ES is weak Feller. The proof of this result follows the exact same lines as the proof of \cite[Proposition 6.1.3.]{Tweedie:book1993}. We formalize this result needed later in the following lemma:
\begin{lemma}\label{lem:weakFeller}
The Markov chain associated to the \plusES-ES is weak Feller.
\end{lemma}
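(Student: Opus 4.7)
The plan is to adapt the argument underlying \cite[Proposition 6.1.3]{Tweedie:book1993}, namely to verify directly that $Ph \in \mathcal{C}(\ZZ)$ for every $h \in \mathcal{C}(\ZZ)$, with extra care at the two discontinuities introduced by the indicator functions in the transition kernel. Writing out $Ph$ using Proposition~\ref{prop:kernelplus} gives
\[
Ph(\z) = \gamma \int h(\uu)\, 1_{\{ f(\uu) \leq f(\z/\gamma)\}}\, \pp(\gamma \uu - \z)\, d\uu \;+\; h(\gamma^{1/q}\z)\, \Pr(f(\z+\N) > f(\z)),
\]
so continuity of $Ph$ at a fixed $\z_0 \in \ZZ = \Rnstar$ will follow from continuity of each of the two summands along an arbitrary sequence $\z_n \to \z_0$ (one can assume $\{\z_n\}$ stays inside a compact subset of $\Rnstar$).

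Both terms would be treated by dominated convergence. For the integral, I would bound the integrand uniformly in $n$ by $\|h\|_\infty\, \gamma \sup_n \pp(\gamma \uu - \z_n)$, which is integrable because $\{\z_n\}$ is bounded and $\pp$ is a Gaussian density; pointwise convergence then holds at every $\uu$ for which both $\pp(\gamma \uu - \z_n) \to \pp(\gamma \uu - \z_0)$ (true everywhere, by continuity of $\pp$) and $1_{\{f(\uu) \leq f(\z_n/\gamma)\}} \to 1_{\{f(\uu) \leq f(\z_0/\gamma)\}}$ (true at every $\uu$ off the boundary $\{f(\uu) = f(\z_0/\gamma)\}$, using also continuity of $t \mapsto f(\z_0/\gamma + t(\z_n-\z_0)/\gamma)$). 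The second summand reduces, via continuity of $h$, to continuity of $\z \mapsto \Pr(f(\z+\N) > f(\z)) = E[1_{\{f(\z+\N) > f(\z)\}}]$, to which the same scheme applies with pointwise convergence off the set $\{\n : f(\z_0+\n) = f(\z_0)\}$.

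The missing ingredient is therefore that the level sets $\LL_c = \{ \x : f(\x) = c\}$ carry zero Lebesgue, and hence zero Gaussian, measure. This is where I would use the standing assumption that $f = g \circ \tilde f$ with $g \in \Monotone$ and $\tilde f$ positively homogeneous of degree $\alpha$ and continuously differentiable: since $g$ is strictly increasing, $\LL_c^{f} = \LL_{g^{-1}(c)}^{\tilde f}$ (and is empty if $g^{-1}(c)$ is not attained), and by Euler's theorem together with \eqref{prop-grad} one has $\nabla \tilde f \neq 0$ on $\Rnstar$, so the implicit function theorem locally realises $\{ \tilde f = g^{-1}(c)\}$ as a $\mathcal{C}^1$ submanifold of dimension $n-1$, which is a Lebesgue null set. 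Translation by $\z_0/\gamma$ or $\z_0$ shows in particular that $\{\uu : f(\uu) = f(\z_0/\gamma)\}$ and $\{\n : f(\z_0+\n) = f(\z_0)\}$ are null.

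The main obstacle, as flagged in the note preceding the lemma, is precisely this handling of the discontinuities of the two indicator functions in the kernel; once the Lebesgue-null-level-set fact is secured, the rest is two essentially routine applications of the dominated convergence theorem plus the elementary continuity of $\pp$ and $h$.
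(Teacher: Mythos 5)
Your proposal is correct, and it is worth noting that the paper itself never carries out a proof of this lemma: the statement sits in material the authors removed from the submission, the proof environment is empty, and the accompanying notes only record that \cite[Proposition~6.1.3]{Tweedie:book1993} does not apply directly and that an ad-hoc dominated-convergence argument ``showing that the points of discontinuity in $\n$ are not problematic'' would be needed. Your proposal executes precisely that sketched plan: you split $Ph$ according to Proposition~\ref{prop:kernelplus}, dominate each term (your dominating function $\|h\|_\infty\,\gamma\,\sup_n \pp(\gamma\uu-\z_n)$ is indeed integrable for a sequence confined to a compact subset of $\Rnstar$), and you supply the one ingredient the authors' note leaves open, namely that the sets $\{\uu: f(\uu)=f(\z_0/\gamma)\}$ and $\{\n: f(\z_0+\n)=f(\z_0)\}$ are Lebesgue-null, so the indicator functions converge $\pp$-almost everywhere. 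Two refinements are worth recording. First, the null-level-set fact does not require continuous differentiability: under Assumption~\ref{ass:f} with $f$ continuous, Lemma~\ref{lem:propf}(i) shows $t\mapsto f(t\s)$ is strictly increasing along every ray, so each level set meets each ray in at most one point and is null by integration in polar coordinates; this gives the weak Feller property under the same hypotheses used for Proposition~\ref{prop:kernelplus} plus continuity, without invoking Assumption~\ref{ass:f2}. Second, since the lemma is stated with no explicit hypotheses, note that \emph{some} such assumption is genuinely needed: for a general scaling-invariant $f$ (e.g.\ one with level sets of positive Lebesgue measure) the indicator discontinuities are not negligible and the argument breaks down, so your restriction to the paper's function class is not merely a convenience.
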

\begin{proof}
\nnote{Actually the proof cited before is using stronger assumptions that are not needed for the Dominated convergence theorem. Hence do a quick and correct proof}
\nnote{Probably state the lemma in terms of property of the mapping $\GG$ $\G$}
\end{proof}

However the singular part of the transition kernel \plusES makes that the chain is not strong Feller.
\todo{actually it's not just the singular part that makes it not strong feller - for the comma case it is also not strong feller}
}

\subsection{Irreducibility, Small Sets and Aperiodicity}\label{sec:ISSA}

A Markov Chain $\Z=(\Zt)_{\t \in \NNN}$ on a state space $\ZZ$ is said $\varphi$-irreducible if there exists a measure $\varphi$ on $\ZZ$ such that for all $A \in \B(\ZZ)$, $\varphi(A) > 0$ implies that $P_{\z}(\tau_{A} < \infty) > 0$ for all $\z \in \ZZ$ where $\tau_{A}=\min \{t > 0 : \Zt \in A\}$ is the first return time to $A$. Another equivalent definition is for all $\z \in \ZZ$ and for all $A \in \B(\ZZ)$
$$
\varphi(A) > 0 \Rightarrow \exists  \, \, t_{\z,A} \in \NNN \text{ such that} \,\, P_{\z}(\Z_{ t_{\z,A}} \in A) > 0 \enspace.
$$
Given that a chain $\Z$ is $\varphi$-irreducible, there exists a maximal irreducibility measure $\psi$ and all maximal irreducibility measure are equivalent (see \cite[Proposition~4.4.2]{Tweedie:book1993}). 
The set of positive $\psi$-measure is denoted
$$
\B^{+}(\ZZ) := \{ A \in \B(\ZZ) : \psi(A) > 0 \} \enspace.
$$
In the sequel we continue to denote $\psi$ the maximal irreducibility measure and hence if $\Z$ is $\psi$-irreducible it means that it is $\varphi$-irreducible for some $\varphi$ and that $\psi$ is a maximal irreducibility measure.
A set $A$ is \emph{full} if $\psi(A^{c}) = 0 $ and \emph{absorbing} if $P(\z,A) =1$ for $\z \in A$.
In addition, a set $C$ is a \emph{small set} if there exists $t \in \NNN$ and a non-trivial measure $\nu_{t}$ on $\B(\ZZ)$ such that for all $\z \in C$
\begin{equation}
P^{t}(\z,A) \geq \nu_{t}(A) \, , A \in \B(\ZZ) \enspace.
\end{equation}
The small set is then called a $\nu_{t}$-small set. 
Consider a small set $C$ satisfying the previous equation with $\nu_{t}(C) >0$ and denote $\nu_{t} = \nu$. The chain is called aperiodic if the g.c.d.\ of the set
$$
E_{C} = \{ k \geq 1: C \text{ is a } \nu_{k} \mbox{-small set with } \nu_{k} = \alpha_{k} \nu \mbox{ for some } \alpha_{k} >0 \}
$$
is one for some (and then for every) small set $C$.

%\subsubsection{Derived from the transition kernel}

We establish  now the $\varphi$-irreducibility, identify some small sets and show the aperiodicity of the normalized chain associated to the \plusES-ES. 

\subsubsection{$\varphi$-irreducibility}
We denote $\muLeb$ the Lebesgue measure on $\ZZ = \R^{\dim}_{\neq}$. 
%and with a small abuse of notation we also denote $\muLeb$ the restriction of the Lebesgue measure to $\ZZ = \R^{\dim}_{\neq}$. 
We prove in the next proposition that the normalized MC associated to the \plusES-ES is irreducible with respect to the Lebesgue measure.
%We will start by studying \plusES for which those properties can be immediately derived from the expression of the transition kernel.

\begin{proposition}Assume that $f$ satisfies Assumptions~\ref{ass:f} and  is continuous on $\Rnstar$. Assume that $\factonefifth > 1$. Then, the Markov chain $\Z$ associated to the \plusES-ES is irreducible w.r.t. the Lebesgue measure $\mu_{\rm Leb}$.
\end{proposition}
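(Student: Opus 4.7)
The plan is to exploit the decomposition of $P$ from Proposition~\ref{prop:kernelplus} into an absolutely continuous part (success) supported on $\LLL_{f(\z/\gamma)}$ plus a Dirac mass at $\z\gamma^{1/q}$ (failure). The key observation is that chaining failures enlarges the sublevel set reachable by a subsequent success, so any bounded Lebesgue-positive target can be attained in finitely many steps. Fix $\z \in \ZZ$ and $A \in \B(\ZZ)$ with $\muLeb(A) > 0$. Taking $R$ large enough, the set $A' := A \cap \BB(0, R)$ has $\muLeb(A') > 0$, and by Lemma~\ref{lem:si}, $A' \subset \LLL_{M R^{\alpha}}$; it therefore suffices to exhibit $t$ with $P^t(\z, A') > 0$.

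First I would establish that failure has positive probability at every $\z' \neq 0$. For small $\epsilon > 0$, positive homogeneity gives $f(\z' + \epsilon \z') = (1 + \epsilon)^{\alpha} f(\z') > f(\z')$, and continuity of $f$ on $\Rnstar$ yields an open neighborhood of $\epsilon\z'$ contained in $\{\uu : f(\z' + \uu) > f(\z')\}$; since $\pp > 0$ everywhere, $\Pr(f(\z' + \N) > f(\z')) > 0$. Consequently
$$\pi_k := \prod_{j=0}^{k-1} \Pr\!\left(f(\z \gamma^{j/q} + \N) > f(\z \gamma^{j/q})\right) > 0$$
is the probability that the chain experiences $k$ successive failures starting from $\z$, and these deterministically land it at $\z \gamma^{k/q}$.

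Next I would pick $k$ large enough so that $\gamma^{\alpha(k/q - 1)} f(\z) > M R^{\alpha}$, which is possible since $\gamma > 1$. By homogeneity $f(\z \gamma^{k/q}/\gamma) = \gamma^{\alpha(k/q - 1)} f(\z) > M R^{\alpha}$, so $A' \subset \LLL_{M R^{\alpha}} \subset \LLL_{f(\z \gamma^{k/q}/\gamma)}$. The indicator in the density $q(\z \gamma^{k/q}, \uu)$ from Proposition~\ref{prop:kernelplus} therefore equals $1$ on $A'$, and $q(\z \gamma^{k/q}, \uu) = \pp(\gamma\uu - \z\gamma^{k/q}) \gamma > 0$ throughout $A'$. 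Combining via the Chapman--Kolmogorov equation yields
$$P^{k+1}(\z, A) \;\geq\; P^{k+1}(\z, A') \;\geq\; \pi_k \int_{A'} \pp(\gamma\uu - \z \gamma^{k/q})\, \gamma \, d\uu \;>\; 0,$$
which proves the $\muLeb$-irreducibility.

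The main technical content is ensuring positive failure probability at every nonzero state; it relies jointly on positive homogeneity, strict positivity of $f$ off the origin, continuity of $f$ on $\Rnstar$, and positivity of the Gaussian density. The role of $\gamma > 1$ is to allow the norm $\|\z \gamma^{k/q}\|$ to grow unboundedly along a chain of failures; without it the reachable sublevel set could not be enlarged. The remainder is bookkeeping combining the singular and absolutely continuous parts of $P$ across $k+1$ iterations.
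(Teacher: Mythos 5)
Your proof is correct and follows essentially the same route as the paper: drive the state outward by a chain of failures (each multiplying the state by $\gamma^{1/q}$, using $\gamma>1$) until the bounded, positive-measure part of $A$ lies inside the success sublevel set $\LLL_{f(\cdot/\gamma)}$, then conclude with one success step through the absolutely continuous Gaussian part of the kernel from Proposition~\ref{prop:kernelplus}. The only deviations are cosmetic: you intersect $A$ with a large ball instead of extracting a compact subset by inner regularity, you certify a positive failure probability pointwise at the finitely many deterministic intermediate states (via homogeneity and continuity at $(1+\epsilon)\z'$) rather than through the paper's uniform bound $\theta$ obtained by sampling outside a ball containing the sublevel set, and you avoid the paper's two-case split by always allowing $k\geq 1$ failures.
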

\begin{proof}
Let $\z \in \ZZ$ and $\LLL_{f(\z/\gamma)}^{f}$ be the sublevel set $\{ \uu \in \ZZ, f(\uu) \leq f(\z/\gamma) \} = \{ \uu \in \ZZ, f( \gamma \uu) \leq f(\z) \}$. And let $A \in \B(\ZZ)$ such that $\mu_{\rm Leb}(A) > 0$. By the regularity of the Lebesgue measure we can include a compact $K$ in A such that $\mu(K) > 0$. Since $K \subset A$, for all $\z$, $P(\z,A) \geq P(\z,K).$
\mathnote{ Regularity of a measure would be with a closed and not a compact (see Bilingsley) - however for Lebesgue measure, works with compact - note also that in Billingsley Theorem 12.3 is stated for $A$ such that $\mu(A) < \infty$, however also holds for $A$ with infinite measure. For a nice proof see the measure-note.pdf Theorem 2.23 - class notes by John K. Hunter.}
%Since $\LLL_{f(\z)}^{f}$ is compact we also have that $K \cap \LLL_{f(\z)}^{f}$ is compact.

If (i) $K \subset \LLL_{f(\z/\gamma)}^{f}$ then $ P(\z,K) = \int_{K}   \factonefifth \pp(\factonefifth \uu - \z) d\uu >0$ as $\uu \in \R^{\dim} \mapsto \pp(\factonefifth \uu - \z) > 0 $. 

\mathnote{Here we do not need that $K$ is compact since for a positive function $f$, $\int f = 0$ if and only if $f = 0$ a.e. Just take $f(\uu) = 1_{K}(\uu) \pp( \gamma \uu - \z)$. We find that for any lebesgue measurable set $A$,  $\mu(A) = 0 $ iff $\int_{A} \gamma \pp( \gamma \uu - \z) =0 $. Hence $\mu(A) > 0$ iff $\int_{A} \gamma \pp( \gamma \uu - \z) > 0  $. Of course if we have a compact we can just use the fact that $\uu \in \R^{\dim} \mapsto \pp(\factonefifth \uu - \z) > 0 $ is continuous on the compact $K$ and hence bounded and reaches its bound, hence there exists $B$ such that $\pp(\factonefifth \uu - \z) \geq B $ and then $P(\z,K) \geq \gamma B$.}

If (ii) $K$ is not included in $ \LLL_{f(\z/\gamma)}^{f} $, then by sampling $\N$ such that $f(\z+\N) > f(\z)$ (which happens with strictly positive probability since by Lemma~\ref{lem:propf}, $ \LLL_{f(\z)}^{f} $ is bounded, hence sampling such that $f(\z+\N) > f(\z)$ can be achieved by sampling outside a ball), $\Z_{1}=\z \factonefifth^{1/q}$ which is at a  larger distance from $0$ (as we assumed that $\gamma > 1$). By repeating this, we build a sequence $\Zt= \z \factonefifth^{\t/q}$ and $f(\Zt) = ({\factonefifth^{\t/q}})^{\alpha} f(\z)$ hence $f(\Zt)$ and $f(\Zt / \gamma)$ go to $\infty$. The set $K$ being compact we can find a ball $\BB(0,\rho)$ such that $K \subset \BB(0,\rho)$. In addition from Lemma~\ref{lem:si}, we know that for all $\rho$, $\BB(0,\rho) \subset \LLL_{\rho^{\alpha}M}$, hence choosing $t$ large enough such that $\LLL_{\rho^{\alpha} M}^{f} \subset \LLL_{f(\Zt/\gamma)}^{f}$, we have that $K \subset \LLL_{f(\Zt/\gamma)}^{f}$ and by (i) $P(\Zt=\z\gamma^{t/q},K) > 0$. 
\newcommand{\zbar}{\bar{\z}}
Thus $P^{t+1}(\z,A) \geq P^{t+1}(\z,K) \geq \Pr(f(\z + \N) > f(\z)) \ldots \Pr\left(f(\z \factonefifth^{\frac{t-1}{q}}+\N) > f(\z \factonefifth^{\frac{t-1}{q}}) \right) P(\z\gamma^{t/q},K) \geq \theta^{t}  P(\z\gamma^{t/q},K)> 0$ where $\theta > 0$ is a lower bound on $\zbar \to \Pr(f(\zbar + \N) > f(\zbar)) $ on a ball that includes $ \LLL_{f(\Zt)}^{f} $. 
Indeed, since $f(\Z_{k})$ increases, for all $k \leq t$, $f(\Z_{k}) \in \LLL_{f(\Zt)}^{f}$.  However according to Lemma~\ref{lem:si}, there exists $R$ such that $ \LLL_{f(\Zt)}^{f} \subset B(0,R) $. Then $ \{ \N \in B(0,2R)^{c} \}  \subset \{ f(\zbar + \N) > f(\zbar) \}$ for all $\zbar$ in $\LLL_{f(\Zt)}^{f} $. We can take $\theta = \Pr( \N \in B(0,2R)^{c} )$.
\end{proof}

\subsubsection{Small Sets and Aperiodicity}\label{small-sets}
We investigate small sets for the \plusES-ES assuming that $f$ is positively homogeneous with degree $\alpha$ with $f(\x) >0$ for $\x \neq 0$ and $f$ is continuous on $\Rnstar$. Consider sets $D_{[l_{1},l_{2}]}$ with $ 0 < l_{1} < l_{2} $ defined as
\newcommand{\DD}{D_{[l_{1},l_{2}]}}
\begin{equation}\label{eq:yang}
D_{[l_{1},l_{2}]} : = \{ \z \in \ZZ, l_{1} \leq f(\z) \leq l_{2} \} \enspace.
\end{equation}
Because $f$ is continuous, the sets $\DD = f^{-1}([l_{1},l_{2}])$ are closed and by Lemma~\ref{lem:si} they are also bounded such that the sets $\DD$ are compact sets. We prove in this section that the sets $\DD$ are small sets for the Markov chain $\Z$.
%%%
\begin{lemma}\label{lem:smallset}
Assume that $f$ is positively homogeneous with degree $\alpha$, $f(\x)>0$ for $\x \neq 0$ and $f$ is continuous on $\Rnstar$. Assume that $\gamma > 1$. Let $\DD$ be a set of the type \eqref{eq:yang} with $0<l_{1} < l_{2}$.
Let $t_{0} \geq 1$ and let $R > 0$ such that $ \LLL_{\gamma^{\frac{\alpha t_{0}}{q}} l_{2}} \subset B(0,R)$ (see Lemma~\ref{lem:si}). Then for all $\z $ in $\DD$ and for all $t \leq t_{0}$
\begin{equation}\label{eq:maj-proba}
\Pr \left( f( \z \gamma^{\frac{t}{q}}  + \N  ) > f \left(\z \gamma^{\frac{t}{q}} \right) \right) \geq 
\Pr \left( \N \in B(0,2R)^{c} \right) =:\theta_{\ref{lem:smallset}}
\end{equation}
where $\N \sim \N(0,\Id)$.
For all $\z \in \DD$ and $t \leq t_{0}$, the following minorization holds, for $A \in \B(\ZZ)$
\begin{equation}\label{eq:ntplusone-smallset}
P^{t+1}(\z,A) \geq\theta_{\ref{lem:smallset}}^{t} \gamma \delta_{t} \int 1_{A \cap \DD}(\uu) 1_{ \{  f(\gamma \uu) \leq \gamma^{{t \alpha}/{q}} l_{1} \} }(\uu)  d \uu   =  : \nu_{t+1}(A) \enspace,
\end{equation}
where $\delta_{t} = \min_{(\z, \uu) \in D_{[l_{1},l_{2}]}^{2}} \pp( \gamma \uu - \z \gamma^{t/q} ) > 0$.
In addition $\nu_{t+1}$ is a non-trivial measure if $t > q$ and hence $\DD$ is a $\nu_{t+1}$-small set provided $t > q$.
\end{lemma}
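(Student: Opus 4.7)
The plan is to prove the two displayed inequalities separately, then check that the proposed measure $\nu_{t+1}$ is non-trivial whenever $t > q$.

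For the rejection-probability bound \eqref{eq:maj-proba}, I would first exploit positive homogeneity: any $\z \in \DD$ satisfies $f(\z \gamma^{t/q}) = \gamma^{t \alpha/q} f(\z) \leq \gamma^{t_0 \alpha/q} l_2$, so $\z \gamma^{t/q} \in \LLL_{\gamma^{t_0 \alpha/q} l_2} \subset B(0,R)$. For any realization $\N \in B(0,2R)^c$, the triangle inequality yields $\| \z\gamma^{t/q} + \N \| > 2R - R = R$, so $\z \gamma^{t/q} + \N \notin B(0,R) \supset \LLL_{f(\z\gamma^{t/q})}$, and hence $f(\z\gamma^{t/q} + \N) > f(\z\gamma^{t/q})$. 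The event inclusion $\{\N \in B(0,2R)^c\} \subset \{f(\z\gamma^{t/q}+\N) > f(\z\gamma^{t/q})\}$ immediately gives the claimed lower bound.

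For the minorization \eqref{eq:ntplusone-smallset}, I would isolate one specific path of length $t+1$: $t$ consecutive rejections followed by one acceptance landing inside $A \cap \DD$. After the $t$ rejections, the chain is located at $\z \gamma^{t/q}$, and each rejection has probability at least $\theta_{\ref{lem:smallset}}$ by the first part of the lemma. At step $t+1$, the accepted-transition density at $\z\gamma^{t/q}$ (read off from Proposition~\ref{prop:kernelplus}) is $\qq(\z\gamma^{t/q},\uu) = 1_{\{f(\uu) \leq f(\z\gamma^{t/q}/\gamma)\}}(\uu)\, \gamma\, \pp(\gamma \uu - \z\gamma^{t/q})$. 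Because $\z \in \DD$ gives $f(\z\gamma^{t/q}/\gamma) \geq \gamma^{(t-q)\alpha/q} l_1$, and since by homogeneity $f(\gamma \uu) \leq \gamma^{t\alpha/q} l_1$ is equivalent to $f(\uu) \leq \gamma^{(t-q)\alpha/q} l_1$, replacing the acceptance indicator by the stricter $1_{\{f(\gamma \uu) \leq \gamma^{t\alpha/q} l_1\}}$ is a valid weakening. Further restricting the integration domain to $\DD$ and bounding the Gaussian density from below by $\delta_t$ on the compact set $\DD \times \DD$ then yields the announced formula.

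The positivity $\delta_t > 0$ is a quick compactness argument: $\DD$ is closed (by continuity of $f$) and bounded (Lemma~\ref{lem:si}), so the image of $\DD \times \DD$ under the continuous map $(\z, \uu) \mapsto \gamma \uu - \z \gamma^{t/q}$ is compact, and the strictly positive continuous density $\pp$ attains a positive minimum there. For non-triviality of $\nu_{t+1}$, it suffices that $\int 1_{\DD}(\uu)\, 1_{\{f(\uu) \leq \gamma^{(t-q)\alpha/q} l_1\}}(\uu)\, d\uu > 0$, i.e.\ that the preimage $f^{-1}\bigl([l_1, \min(l_2, \gamma^{(t-q)\alpha/q} l_1)]\bigr)$ has positive Lebesgue measure. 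When $t > q$ we have $\gamma^{(t-q)\alpha/q} > 1$ so $\gamma^{(t-q)\alpha/q} l_1 > l_1$, and combined with $l_1 < l_2$ the interval is non-degenerate; by continuity of $f$ the corresponding preimage is open, and by positive homogeneity one can scale any unit direction to hit any positive target value of $f$ (Lemma~\ref{lem:propf}(i)), so the preimage is non-empty, hence of positive Lebesgue measure. The only real bookkeeping obstacle is aligning exponents so that the lower bound $f(\z) \geq l_1$ (which controls the acceptance indicator through $\gamma^{(t-q)\alpha/q}$) and the upper bound $f(\z) \leq l_2$ (which controls the rejection probability through enclosure in $B(0,R)$) fit together consistently; once they are matched the rest of the argument is mechanical.
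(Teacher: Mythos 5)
Your proof is correct and follows essentially the same route as the paper's: the same event inclusion $\{\N \in B(0,2R)^{c}\} \subset \{f(\z\gamma^{t/q}+\N) > f(\z\gamma^{t/q})\}$ for \eqref{eq:maj-proba}, the same lower bound on $P^{t+1}$ via the path of $t$ rejections (during which the chain sits at $\z\gamma^{k/q}$) followed by one acceptance, the same change of variables/kernel density with the indicator weakened to $1_{\{f(\gamma\uu)\leq\gamma^{t\alpha/q}l_{1}\}}$, the same compactness argument for $\delta_{t}>0$, and the same non-triviality criterion for $t>q$. The only slip is cosmetic: the preimage of the \emph{closed} interval $[l_{1},\min(l_{2},\gamma^{(t-q)\alpha/q}l_{1})]$ need not be open, but it contains the preimage of the corresponding open interval, which is open in $\Rnstar$ and non-empty by Lemma~\ref{lem:propf}(i), so the positive-measure conclusion stands.
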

%%%
\mathnote{$ \min_{\z \in D_{[l_{1},l_{2}]}} \pp( \gamma \uu - \z \gamma^{t/4} )$ 
 a measurable application and even a continuous application if we assume $p$ continuous. I guess I do not have the simplest proof for that. I basically use the uniform continuity of a continuous function on a compact. Consider $u_{n}$ converging to $u_{0}$ where we want to prove the continuity of $g(u) = \min_{\z \in  D_{[l_{1},l_{2}]} }  \pp( \gamma \uu - \z \gamma^{t/4} ) = \min_{\z \in  D_{[l_{1},l_{2}]} }  \pp(\z,u)$ Let $z_{0}$ such that $g(u_{0}) = \pp( \z_{0}, u_{0}) $ (exists because we have a continuous function on a compact. Let us denote $K=D_{[l_{1},l_{2}]}$. On $K \times B(u0,\epsilon)$, $\pp(\z,u)$ is uniformly continuous. Consider $\min_{K \times B(u0,\epsilon)}  \pp(\z,u):=A$. Then $A < g(u_{0})$. We need to show that for $u_{n}$ close from $u_{0}$, $z_{n}$ where the minimum is reached cannot be far from $\z_{0}$ which implies by uniform continuity that $g(un)$ is close from $g(u_{0})$. We proceed by proving that both (un,zn) [associated to $g(un)$] and (u0,z0) [associated to $g(u0)$] are close from the point where $A$ is reached. by contradiction if it was not close then we would find in a neighborhood the point where the global minimum $A$ is reached a point $u_{0}, zz$ that would satisfy $\pp(zz,uo) < g(u0)$ which is impossible. Same for (un,zn). Overall both (un,zn) and (u0,z0) are closed from the point where $A$ is reached and therefore they are close together.
 Note: 3 August 2013: I do not see that the previous proof is correct ...
 What is going on: 1) given $u_{0}$ via the compact we know that the minimum is reached at a point $z_{0}$. 2) Moving $u_{0}$ in a ball has the effect to move also via the translation $\gamma u - \z \gamma^{t/4}$, $z_{0}$ in a ball as well. Basically the translation moves every point of the compact continuously. Then we check the density $p$ for this moving ``z-domain''.  In case the minimum was only reached at one point of the border of the compact (BTW, the minimum with a gaussian distribution can only be reached at the border), then the minimum will stay reached at the same $z_{0}$. If it was reached at two different points of the border, then the effect of moving in the ball will be to determine one of the two minimum - still continuity remains.

Would be good to have a clean proof: check slide - might give the proof idea.

\url{http://www.math.ku.dk/~erhansen/Markov_04/}

Usefull 
\url{http://citeseerx.ist.psu.edu/viewdoc/download?doi=10.1.1.53.3081&rep=rep1&type=pdf}

Meyn Tweedie page 89 (or Section 4.3.3) - require for additive RW to have the density of the increment bounded away from zero in the neighborhood of zero.
 }
 
\mathnote{Pour un autoregressive process $\Ztt = \Zt + Wt$ where $Wt$ admits a density $\gamma(x)$ such that for all compact C $\inf_{x \in C} \gamma(x) >0$. The compacts are small sets. (see cours Hansen \url{http://www.math.ku.dk/~erhansen/Markov_04/}).
The slides where it is stated are: CoursHansenMCMC-slides-AR1-smallsets.ps and the solution is in the handwritten notes: HansenHandOut.pdf. (p 13)
It turns out that this is the same argument that we find in the slides of Rosenthal (p 13):
density of the kernel $f(x,y)$, satisfy $f(x,y) > \delta$ for all $x, y \in C$ (true if C compact and f satisfy $\inf_{x,y \in C} f(x,y) > 0$
$ P(x,A) \geq \delta Leb(A \cap C) = \delta Leb(C)[ Leb(A \cap C)/Leb(C)]$ This latter equation we have exhibited the probability measure. 
Note we need that the sum of two compacts is still a compact (see the proof in the additional material, exo 7 if needed).
}

\begin{proof}
Note first that for all $t \leq t_{0}$, for all $\z \in \DD$, $\LLL_{f(\z \gamma^{t/q})} \subset \LLL_{\gamma^{{\alpha t_{0}}/{q}} l_{2}} $ (we use here that $\gamma > 1$). We now claim that if $\N \in B(0,2R)^{c}$, then $ f( \z \gamma^{{t}/{q}}  + \N  ) > f \left(\z \gamma^{{t}/{q}} \right)$. Indeed, if $\N \in B(0,2R)^{c}$, then for all $\z$ in $\DD$, $\z \gamma^{\frac{t}{q}}  $ is inside $\LLL_{f(\z \gamma^{t/q})} \subset\LLL_{\gamma^{{\alpha t_{0}}/{q}} l_{2}} \subset B(0,R)$ (by definition of $R$) and hence $\z \gamma^{\frac{t}{q}}  + \N$ will be outside $B(0,R)$, i.e., outside $\LLL_{f \left(\z \gamma^{{t}/{q}} \right)}$, i.e., $f( \z \gamma^{{t}/{q}}  + \N  ) > f \left(\z \gamma^{{t}/{q}} \right)$. Hence \eqref{eq:maj-proba}. \todo{explain that it does not work for strong aperiodicity - explain the intuitive idea about the construction}We will now prove \eqref{eq:ntplusone-smallset}. We lower bound the probability $P^{t+1}(\z,A)=P_{\z}( \Z_{t+1} \in A) $ by the probability to reach $A$ in $t+1$ steps starting from $\z$ by having no success for the first $t-1$ iterations:
\begin{align*}
P^{t+1}(\z,A) \geq \underbrace{P_{\z} \left( \{ \Z_{t+1} \in A \} \cap \{ f( \Z_{t} + \Ut^{1}) \leq f( \Z_{t}) \} \bigcap_{k=0}^{t-1} \{ f( \Z_{k} + \U_{k}^{1}) > f(\Z_{k})  \} \right)}_{A_{1}} \enspace.
\end{align*}
However if $ \{ f( \Z_{k} + \U_{k}^{1}) > f(\Z_{k})  \}$ then $\Z_{k+1} = \Z_{k} \gamma^{1/q} $ such that given that $\Z_{0} = \z$, the following equalities between events holds 
%%%
\begin{multline*}
\{ \Z_{t+1} \in A \} \cap \{ f( \Z_{t} + \U_{t}^{1}) \leq f( \Z_{t} ) \}
\bigcap_{k=0}^{t-1} \{ f( \Z_{k} + \U_{k}^{1}) > f(\Z_{k})  \}  = \\
\left\{ \frac{\z \gamma^{t/q} + \U_{t}^{1}}{\gamma} \in A  \right\} 
\cap \left\{ f( \z \gamma^{t/q} + \U_{t}^{1}) \leq f( \z \gamma^{t/q} ) \} 
\bigcap_{k=0}^{t-1}  \{ f( \z \gamma^{k/q} + \U_{k}^{1}) > f(\z \gamma^{k/q})  \right\} \enspace.
\end{multline*}
%%%
Hence by independence of the $(\U_{k})_{0 \leq k \leq t}$, $A_{1} =$
%%%
\begin{align*}
& 
P_{\z} \left(  \frac{\z \gamma^{t/q} + \U_{t}^{1}}{\gamma} \in A  \cap \{ f( \z \gamma^{\frac{t}{q}} + \U_{t}^{1}) \leq f(\z \gamma^{\frac{t}{q}} ) \} \right) \prod_{k=0}^{t-1} P_{\z} \left( f( \z \gamma^{\frac{k}{q}} + \U_{k}^{1}) > f(\z \gamma^{\frac{k}{q}}) \right)
\intertext{using now \eqref{eq:maj-proba}}
& \geq P_{\z} \left(  \frac{\z \gamma^{t/q} + \U_{t}^{1}}{\gamma} \in A \cap \{ f( \z \gamma^{t/q} + \U_{t}^{1}) \leq f( \z \gamma^{t/q} ) \}  \right) \theta_{\ref{lem:smallset}}^{t} \\
& = \theta_{\ref{lem:smallset}}^{t} \int 1_{A} \left( \frac{\z \gamma^{t/q} + \uu}{\gamma}  \right) 1_{\{ f( \z \gamma^{t/q} + \uu) \leq  f( \z \gamma^{t/q} ) \}}(\uu) \pp(\uu) d \uu \\
& =  \theta_{\ref{lem:smallset}}^{t} \int 1_{A} \left( \bar \uu \right) 1_{\{ f( \gamma \bar \uu) \leq  f( \z \gamma^{t/q} ) \}}(\bar \uu) \pp( \gamma \bar \uu - \z \gamma^{t/q}) \gamma d \bar \uu \\
& \geq \theta_{\ref{lem:smallset}}^{t} \int 1_{A \cap \DD} \left( \bar \uu \right) 1_{\{ f( \gamma \bar \uu) \leq  f( \z \gamma^{t/q} ) \}}(\bar \uu) \pp( \gamma \bar \uu - \z \gamma^{t/q}) \gamma d \bar \uu  \enspace .
\end{align*}
%%%
For all $\z \in \DD$, $\bar \uu$, $ 1_{\{ f( \gamma \bar \uu) \leq  f( \z \gamma^{t/q} ) \}}(\bar \uu) =  1_{\{ f( \gamma \bar \uu) \leq  \gamma^{{t\alpha}{q}}  f( \z ) \}}(\bar \uu) \geq 1_{\left\{ f( \gamma \bar \uu) \leq  \gamma^{{t\alpha}{q}} l_{1} \right\}}(\bar \uu) $ and for all $(\z,\bar \uu) \in \DD$, $\pp( \gamma \bar \uu - \z \gamma^{t/q}) \geq  \min_{(\z,\bar \uu) \in \DD^{2}} \pp( \gamma \bar \uu - \z \gamma^{t/q} )$. Since $\DD$ is compact, $\{\gamma \bar \uu - \z \gamma^{t/q},  (\z,\bar \uu) \in \DD^{2} \}$ is also compact and thus there exists $\delta_{t}$ such that $\min_{(\z,\uu) \in \DD^{2}} \pp( \gamma \bar \uu - \z \gamma^{t/q} ) \geq \delta_{t} > 0$.%%
\mathnote{Continuous image of a compact is a compact.}
\mathnote{that's here that we need that the sum of two compacts is compact}
\mathnote{actually two proofs of the same fact - OK}
Hence
$$
P^{t+1}(\z,A) \geq A_{1} \geq  \theta_{\ref{lem:smallset}}^{t} \delta_{t} \gamma \int 1_{A \cap \DD}(\uu)1_{\left\{ f( \gamma \uu) \leq  \gamma^{{t\alpha}/{q}} l_{1} \right\}}(\uu)  d\uu
$$
which is a non-trivial measure if $t > q$.
\mathnote{$  l_{1} \leq f(\uu) $ and $\gamma^{\alpha} f(\uu) \leq \gamma^{t \alpha/q} l_{1} $.}
\end{proof}

Remark that the constant $\theta_{\ref{lem:smallset}}$ defined in \eqref{eq:maj-proba} and used in \eqref{eq:ntplusone-smallset} depends on $t_{0}$ as the radius $R$ of the ball where the sublevel set $ \LLL_{\gamma^{{\alpha t_{0}}/{q}} l_{2}}$ is included depends on $t_{0}$. To prove the aperiodicity of the chain we construct a joint minorization measure $\nu$ working for two consecutive integers having hence $1$ as greatest common divisor and that satisfies $\nu(\DD) > 0$.  More precisely we prove the following proposition.
\newcommand{\qqq}{\bar{q}}
%%%%%%%%%%%%%%%
%%% PROPOSITION %%%
%%%%%%%%%%%%%%%
\begin{proposition}\label{prop:D-6-7smallSets}
Assume  that $f$ is positively homogeneous with degree $\alpha$, $f(\x)>0$ for $\x \neq 0$ and $f$ is continuous on $\Rnstar$.  Assume that $\gamma > 1$. Let $\DD$ be a set of the type \eqref{eq:yang}. Let $\bar{q} = \lfloor q \rfloor +1 $.  Then for all $\z \in \DD$, $A \in \B(\ZZ)$
\begin{align}\label{hom1}
P^{\bar{q}+1}(\z, A) & \geq  \zeta^{\qqq}  \nu(A)   \\\label{hom2}
%_{\ref{prop:D-6-7smallSets}}^{\qqq}  \nu(A)   \\\label{hom2}
P^{\bar{q}+2}(\z, A) & \geq \zeta^{\qqq+1}  \nu(A) 
\end{align}
where $\nu$ is the measure defined by
$\nu(A) = \delta \gamma \int 1_{A \cap \DD} ( \uu)1_{\left\{ f( \gamma \uu) \leq  \gamma^{\qqq \alpha/q} l_{1} \right\}}(\uu)  d \uu$ with $\delta = \min \{ \delta_{\qqq}, \delta_{\qqq+1} \}$ and $\zeta$ is the constant $\theta_{\ref{lem:smallset}}$ in \eqref{eq:maj-proba} for $t_{0}=\qqq+2$.
In addition $\nu(\DD) > 0$ which implies that the chain $\Z$ is aperiodic.
\end{proposition}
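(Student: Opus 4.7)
The plan is to apply Lemma~\ref{lem:smallset} twice on the same set $\DD$, once with $t = \qqq$ and once with $t = \qqq + 1$, using a single choice $t_0 = \qqq + 2$ in the lemma so that the constant $\theta_{\ref{lem:smallset}}$ (renamed $\zeta$ here) is the same in both applications. Since $\qqq = \lfloor q \rfloor + 1 > q$, both values of $t$ satisfy the non-triviality condition $t > q$ from the lemma. This directly yields, for every $\z \in \DD$ and every $A \in \B(\ZZ)$,
\begin{align*}
P^{\qqq+1}(\z,A) & \geq \zeta^{\qqq}\, \gamma\, \delta_{\qqq} \int 1_{A \cap \DD}(\uu)\, 1_{\{f(\gamma \uu) \leq \gamma^{\qqq \alpha/q} l_{1}\}}(\uu)\, d\uu , \\
P^{\qqq+2}(\z,A) & \geq \zeta^{\qqq+1}\, \gamma\, \delta_{\qqq+1} \int 1_{A \cap \DD}(\uu)\, 1_{\{f(\gamma \uu) \leq \gamma^{(\qqq+1) \alpha/q} l_{1}\}}(\uu)\, d\uu .
\end{align*}

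Next I would unify these two bounds into the common minorizing measure $\nu$. Taking $\delta = \min\{\delta_{\qqq},\delta_{\qqq+1}\}$ absorbs the two prefactors. For the indicators, since $\gamma > 1$ one has the inclusion $\{f(\gamma \uu) \leq \gamma^{\qqq \alpha/q} l_{1}\} \subset \{f(\gamma \uu) \leq \gamma^{(\qqq+1)\alpha/q} l_{1}\}$, so replacing both indicators by the smaller one (corresponding to $t = \qqq$) still produces a valid lower bound. Relabelling then gives \eqref{hom1} and \eqref{hom2} with $\nu$ as in the statement.

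The remaining task is to show $\nu(\DD) > 0$, which then delivers aperiodicity: $\DD$ will be simultaneously a small set for the $(\qqq+1)$-step and the $(\qqq+2)$-step transition kernel with minorizing measures $\zeta^{\qqq}\nu$ and $\zeta^{\qqq+1}\nu$ respectively, both proportional to the same $\nu$, and then $\gcd(\qqq+1,\qqq+2)=1$ forces the period to be $1$. To verify $\nu(\DD) > 0$, use the positive homogeneity of $f$ to rewrite $\{f(\gamma\uu) \leq \gamma^{\qqq\alpha/q} l_1\}$ as $\{f(\uu) \leq \gamma^{(\qqq/q - 1)\alpha} l_1\}$. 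Since $\qqq > q$ the exponent is strictly positive, so this upper bound strictly exceeds $l_1$; intersecting with $\DD$ produces an ``annulus'' of the form $\{l_1 \leq f(\uu) \leq c\}$ with $c > l_1$. Parameterizing $\R^n\setminus\{0\}$ in polar coordinates and invoking Lemma~\ref{lem:too}, for each direction $\theta\in\Sphere_1$ the admissible radii form a non-degenerate interval, and integrating over $\Sphere_1$ yields strictly positive Lebesgue measure.

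The main obstacle is not really conceptual; it is the bookkeeping needed to ensure the \emph{same} $\zeta$ serves both values of $t$, which is handled by committing to $t_0 = \qqq + 2$ before invoking Lemma~\ref{lem:smallset}, and to see that shrinking the indicator in the $t=\qqq+1$ bound to match the $t=\qqq$ one is legitimate. Once this is in place, the aperiodicity conclusion follows at once from the definition of $E_\DD$ applied to the two consecutive integers $\qqq+1$ and $\qqq+2$.
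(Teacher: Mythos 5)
Your proposal is correct and follows essentially the same route as the paper's proof: invoke Lemma~\ref{lem:smallset} with $t_{0}=\qqq+2$ for both $t=\qqq$ and $t=\qqq+1$, absorb the prefactors via $\delta=\min\{\delta_{\qqq},\delta_{\qqq+1}\}$, shrink the indicator in the $(\qqq+2)$-step bound to the common one using $\gamma>1$, and conclude aperiodicity from $\nu(\DD)>0$ and $\gcd(\qqq+1,\qqq+2)=1$. Your polar-coordinate justification of $\nu(\DD)>0$ simply spells out what the paper dismisses with ``since $\qqq/q>1$'', and it is sound.
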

\begin{proof}
From Lemma~\ref{lem:smallset}
\begin{align}\label{eq:tchoug}
P^{\qqq+1}(\z,A) & \geq \zeta^{\qqq} \delta_{\qqq} \gamma \int 1_{A \cap \DD}(\uu) 1_{ \{  f(\gamma \uu) \leq \gamma^{ {\qqq \alpha}/{q}} l_{1} \} }(\uu)  d \uu  \enspace, \\ \label{eq:tchoug2}
P^{\qqq+2}(\z,A) & \geq \zeta^{\qqq+1} \gamma \delta_{\qqq+1}  \int 1_{A \cap \DD}(\uu) 1_{ \{  f(\gamma \uu) \leq \gamma^{{(\qqq+1) \alpha}/{q}} l_{1} \} }(\uu) d \uu  \enspace.
\end{align}
Since $\delta \leq \delta_{\qqq} $ we find using \eqref{eq:tchoug} that
$$
P^{\qqq+1}(\z,A) \geq \zeta^{\qqq} \delta \gamma  \int 1_{A \cap \DD}(\uu) 1_{ \{  f(\gamma \uu) \leq \gamma^{{\qqq \alpha}/{q}} l_{1} \} }(\uu) d \uu,   
$$
which is exactly \eqref{hom1}. Using in addition the fact that $1_{ \{  f(\gamma \uu) \leq \gamma^{{(\qqq+1) \alpha}/{q}} l_{1} \} } (\uu) \geq 1_{ \{  f(\gamma \uu) \leq \gamma^{{\qqq \alpha}/{q}} l_{1} \} }(\uu)  $ that we  inject in \eqref{eq:tchoug2}, we find \eqref{hom2}.
Since $\qqq/q > 1$, $\nu(\DD) > 0$. As two consecutive integers have $1$ as g.c.d., the g.c.d.\ of $\qqq+1$ and $\qqq+2$ is one and hence the chain $\Z$ is aperiodic.
\mathnote{proof of two consecutive integer have 1 has greatest common divisor can be found on wikipedia: \url{http://www.proofwiki.org/wiki/Consecutive_Integers_are_Coprime}}
\end{proof}

\subsection{Geometric Ergodicity}

In this section we derive the geometric ergodicity of the chain $\Z$. Geometric ergodicity will imply the other stability properties needed, namely positivity and Harris recurrence whose definitions are reminded below. First, let us recall that
%positivity (existence of a stationary measure which is a probability measure) and Harris recurrence of the chains. 
a $\sigma$-finite measure $\pi$ on $\B(\ZZ)$ with the property
$$
\pi(A) = \int_{\ZZ} \pi(d \z) P(\z,A),  \, A \in \B(\ZZ)
$$
is called invariant. A $\varphi$-irreducible chain admitting an invariant probability measure is called a \emph{positive} chain.
Harris recurrence is a concept ensuring that a chain visits the state space sufficiently often. It is defined for a $\psi$-irreducible chain as: A $\psi$-irreducible Markov chain is \emph{Harris-recurrent} if for all $A \subset \ZZ$ with $\psi(A) > 0$, and for all $\z \in \ZZ$, the chain will eventually reach $A$ with probability $1$ starting from $\z$, formally if $P_{\z}(\eta_{A} = \infty) = 1$ where $\eta_{A}$ be the \emph{occupation time} of $A$, i.e., $\eta_{A} = \sum_{t=1}^{\infty} 1_{\Zt \in A}$. An (Harris-)recurrent chain admits an unique (up to a constant multiples) invariant measure \cite[Theorem~10.0.4]{Tweedie:book1993}.

For a function $V \geq 1$, the $V$-norm for a signed measure $\nu$ is defined as
$$
\| \nu \|_{V} = \sup_{k: |k| \leq V} | \nu(k)| = \sup_{k: |k| \leq V} | \int k(\y) \nu( d \y)| \enspace.
$$
Geometric ergodicity translates the fact that convergence to the invariant measure takes place at a geometric rate. Different notions of geometric ergodicity do exist (see \cite{Tweedie:book1993}) and we will consider the form that appears in the following theorem. For any $V$, $PV$ is defined as $PV(\z) := \int P(\z, d\y) V(\y)$.
\begin{theorem}
(Geometric Ergodic Theorem \cite[Theorem 15.0.1]{Tweedie:book1993}) 
Suppose that the chain $\Z$ is $\psi$-irreducible and aperiodic. Then the following three conditions are equivalent:
(i) The chain $\Z$ is positive recurrent with invariant probability measure $\pi$, and there exists some petite set $C \in \B^{+}(\ZZ) $, $\rho_{C} < 1$ and $M_{C} < \infty$ and $P^{\infty}(C) > 0$ such that for all $\z \in C$
$$
|P^{t}(\z,C) - P^{\infty}(C) | \leq M_{C} \rho_{C}^{t}.
$$
(ii) There exists some petite set $C$ and $\kappa > 1$ such that
$$
\sup_{\z \in C} E_{\z}[\kappa^{\tau_{C}}] < \infty \enspace.
$$
(iii) There exists a petite set $C \in \B(\ZZ)$, constants $b < \infty$, $\vartheta < 1$ and a function $V \geq 1$ finite at some one $\z_{0} \in \ZZ$ satisfying
\begin{equation}\label{eq:driftgerd}
PV(\z) \leq \vartheta V(\z) + b 1_{C}(\z),  \z \in \ZZ.
\end{equation}
Any of these three conditions imply that the following two statements hold. The set $S_{V}=\{ \z: V(\z) < \infty\}$ is absorbing and full, where $V$ is any solution to \eqref{eq:driftgerd}. Furthermore, there exist constants $r>1$, $R < \infty$ such that for any $\z \in S_{V}$
\begin{equation}\label{eq:C-Vnorm}
\sum_{t} r^{t} \| P^{t}(\z,.) - \pi \|_{V} \leq R V(\z) \enspace.
\end{equation}
\end{theorem}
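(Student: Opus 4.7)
The plan is to prove the cyclic chain of implications $(\mathrm{iii}) \Rightarrow (\mathrm{ii}) \Rightarrow (\mathrm{i}) \Rightarrow (\mathrm{iii})$ and then obtain the $V$-norm convergence \eqref{eq:C-Vnorm} as a consequence. None of these steps is really novel here (this is the classical Meyn--Tweedie result), so the proposal is to follow the route of \cite{Tweedie:book1993} while highlighting where the work really happens.

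For $(\mathrm{iii}) \Rightarrow (\mathrm{ii})$, I would use the geometric drift to manufacture geometric moments of the return time $\tau_C$. The key identity is that iterating $PV \leq \vartheta V + b 1_C$ shows that $M_t := \vartheta^{-(t \wedge \tau_C)} V(Z_{t \wedge \tau_C})$ is a non-negative supermartingale under $P_{\z}$ for $\z \notin C$, so optional stopping and $V \geq 1$ yield $E_{\z}[\vartheta^{-\tau_C}] \leq V(\z)/\vartheta$. A one-step argument from $\z \in C$ gives the uniform bound $\sup_{\z \in C} E_{\z}[\kappa^{\tau_C}] < \infty$ for $\kappa = 1/\vartheta > 1$.

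For $(\mathrm{ii}) \Rightarrow (\mathrm{i})$, the main tool is Nummelin splitting applied to the small set $C$: the minorization $P^{m}(\z,\cdot) \geq \nu_m(\cdot)$ on $\z \in C$ lets one enlarge the state space so that the split chain possesses an accessible atom $\alpha$. The geometric moment of $\tau_C$ transfers to a geometric moment of $\tau_\alpha$, which by Kac's theorem produces a finite invariant measure for the split chain, hence an invariant probability $\pi$ for $\Z$, i.e.\ positivity. A standard renewal/regeneration argument (comparing the chain with a regenerated copy) then produces, for the small set $C$ itself, a geometric rate of convergence $|P^{t}(\z,C) - P^{\infty}(C)| \leq M_C \rho_C^{t}$. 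For $(\mathrm{i}) \Rightarrow (\mathrm{iii})$, I would define $V(\z) := E_{\z}\bigl[\sum_{k=0}^{\tau_C - 1} \kappa^{k}\bigr]$ for a $\kappa \in (1, \kappa_0)$, where $\kappa_0$ comes from the geometric moment of $\tau_C$. By a first-step decomposition one checks $PV(\z) = \kappa^{-1} V(\z) - \kappa^{-1} + b\, 1_C(\z)$ up to finite boundary terms, which gives the drift \eqref{eq:driftgerd} with $\vartheta = \kappa^{-1} < 1$; the set $\{V < \infty\}$ is then absorbing and full by $\psi$-irreducibility.

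The main obstacle, and the step that really carries the weight, is the passage from the equivalent conditions to the quantitative $V$-norm bound \eqref{eq:C-Vnorm}. The standard approach is to use the split chain: couple two copies of $\Z$, one started at $\z$ and one started at $\pi$, regenerating independently at atom visits of the split chain. Geometric moments of the inter-regeneration times (inherited from $(\mathrm{ii})$) give a geometric bound on the coupling time, and comparing integrals of test functions bounded by $V$ yields $\|P^{t}(\z,\cdot) - \pi\|_V \leq R\, V(\z)\, r^{-t}$ for some $r > 1$. Summing over $t$ gives \eqref{eq:C-Vnorm}. The delicate part is tracking the dependence on $V$ (not merely total variation), which requires that the $V$-weighted excursions between regenerations have geometric moments; this is precisely where the drift condition feeds back into the coupling argument to control the size of $V$ along excursions, and where the bookkeeping in Chapter~15 of \cite{Tweedie:book1993} becomes unavoidable.
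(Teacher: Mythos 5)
This statement is not proved in the paper at all: it is quoted verbatim as Theorem~15.0.1 of \cite{Tweedie:book1993}, and the authors simply import it, so there is no in-paper argument to compare your proposal against. Your sketch follows the standard Meyn--Tweedie route (drift $\Rightarrow$ geometric return-time moments via a supermartingale/comparison argument, Nummelin splitting of the petite set to create an atom, Kac's theorem for positivity, regeneration/coupling for the $V$-norm bound), and as a roadmap it is accurate. But as a proof it is not self-contained, and two of the steps you pass over quickly are exactly where the real difficulty sits. First, in $(\mathrm{iii}) \Rightarrow (\mathrm{ii})$ your one-step argument from $\z \in C$ needs $\sup_{C} V < \infty$ (or $\sup_C PV < \infty$), which is not part of hypothesis (iii) since $V$ is only assumed finite at a single point; the textbook fix is to replace $C$ by a sublevel set of $V$, which one must first show is petite using the drift together with $\psi$-irreducibility. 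Second, your cycle $(\mathrm{i}) \Rightarrow (\mathrm{iii})$ starts from ``a $\kappa_0$ coming from the geometric moment of $\tau_C$'', but condition (i) only gives geometric convergence of $P^{t}(\z,C)$ on $C$; extracting geometric tails of $\tau_C$ from that is the deep renewal-theoretic step (Kendall's theorem on geometrically converging renewal sequences), and without it the implication as written silently presupposes (ii), making the cycle circular. Similarly, the final $V$-norm estimate \eqref{eq:C-Vnorm} is asserted via ``the bookkeeping in Chapter~15'', i.e.\ delegated back to the very reference being reproved. So the proposal is best read as a correct outline of the literature proof rather than an independent argument; given that the paper itself treats the theorem as a citation, that is a reasonable stance, but the gaps above would have to be filled (or the result simply cited, as the paper does) for the write-up to stand on its own.
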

The drift operator is defined as 
%\begin{equation}\label{eq:drift-operator}
$
\Delta V(\z) = PV(\z) - V(\z)
$.
%= E[V(\Z_{1}| \Z_{0}=\z)] - V(\z) 
%\end{equation}
The inequality \eqref{eq:driftgerd} is called a drift condition that can be re-written as
$$
\Delta V(\z) \leq \underbrace{(\vartheta - 1)}_{<0} V(\z) + b 1_{C}(\z) \enspace.
$$
$P$ is then said to admit a drift towards the set $C$.
The previous theorem is using the notion of petite sets but small sets are actually also petite sets (see Section~5.5.2 \cite{Tweedie:book1993}). We will in the sequel prove a geometric drift towards a small set $C$ that will hence imply a geometric drift towards petite set. It will subsequently imply the existence of a probability invariant measure and Harris recurrence 
\cite{Tweedie:book1993}. 

\subsubsection{Geometric Drift Condition for Positively Homogenous Functions}

In this section we investigate drift conditions for functions that are a monotonically increasing transformation of a positively homogeneous function, i.e., $h = g \circ f$ for $f$ a positively homogeneous function with degree $\alpha$ and $g \in \Monotone$. 

\newcommand{\zzz}{\tilde{\z}}

We have shown that the sets $D_{[l_{1},l_{2}]}$ are some small sets for $\Z$ (under the assumptions of Lemma~\ref{lem:smallset}). Hence proving negativity of the drift function outside a small set requires to prove negativity for $f(\z)$ ``large'' as well as for $f(\z)$ close to $0$.
We are going to prove that under some regularity assumptions on $f$, the function
\begin{equation*}
V(\z)= f(\z) 1_{\{ f(\z) \geq  1 \}} + \frac{1}{f(\z)} 1_{\{ f(\z) <  1 \}}
\end{equation*}
satisfies a geometric drift condition for the \plusES-ES algorithm provided $\factonefifth > 1$ and the expected inverse of the step-size change to the $\alpha$ on \emph{linear functions} is strictly smaller one that directly translates into:
$$
\frac12 \left( \frac{1}{\factonefifth^{\alpha}} + \factonefifth^{\alpha/q} \right) < 1 \enspace.
$$
\anne{Note that I need to verify the integrability against the drift function. I am wondering whether Meyn-Tweedie talk about that. Done in a technical result later on.}
\mathnote{I guess their trick is that everything can be possibly infinite.}

Given the shape of the small sets proven in Section~\ref{small-sets}, to establish a geometric drift condition,\smalltodo{not equivalent - think about lim sup} it is enough to prove that the limit of $PV/V$ is strictly smaller 1 when $\z$ goes to $0$ and to $\infty$:

\begin{lemma}\label{lem:CSdrift}
Let $f$ be positively homogeneous function with degree $\alpha$ and $f(\x) > 0$ for $\x \neq 0$ and $f$ continuous on $\Rnstar$. Assume that $\gamma > 1$. Let $V$ be a function finite at some one $\z_{0} \in \ZZ$ and such that $V \geq 1$
that satisfies
\begin{equation}\label{eq:toub}
\lim_{\| \z \| \to \infty} \frac{PV(\z)}{V(\z)} < 1 \mbox{ and } \lim_{\|\z\| \to 0} \frac{PV(\z)}{V(\z)} < 1 \enspace.
\end{equation}
%\begin{equation}\label{eq:toub1}
%\lim_{\| \z \| \to \infty} \frac{PV(\z)}{V(\z)} < 1
%\end{equation}
%and
%% 
%\begin{equation}\label{eq:toub2}
%\lim_{\|\z\| \to 0} \frac{PV(\z)}{V(\z)} < 1 
%\end{equation}
%%
Then $V$ is a geometric drift in the sense of \eqref{eq:driftgerd} for the \plusES-ES.
\end{lemma}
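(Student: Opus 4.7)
The plan is to localize the drift condition: first use the two limit hypotheses to obtain the strict inequality $PV(\z) \leq \vartheta V(\z)$ pointwise outside a compact annulus in $\z$, then convert this annulus into a sublevel-set region $D_{[l_1,l_2]}$ that is both small (by Proposition~\ref{prop:D-6-7smallSets}) and compact, and finally absorb the remaining contribution on that set into the additive constant $b$.

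Concretely, I will fix $\vartheta \in (\max(\ell_0,\ell_\infty),1)$, where $\ell_0$ and $\ell_\infty$ denote the two limits in \eqref{eq:toub}. By definition of the limits there exist radii $0 < R_2 < R_1 < \infty$ such that $PV(\z) \leq \vartheta V(\z)$ whenever $\|\z\| < R_2$ or $\|\z\| > R_1$. Lemma~\ref{lem:too} then lets me pass from norm thresholds to $f$-thresholds: since $m\|\z\|^{\alpha} \leq f(\z) \leq M\|\z\|^{\alpha}$, setting $l_1 := m R_2^{\alpha}$ and $l_2 := M R_1^{\alpha}$ yields the inclusion $\{R_2 \leq \|\z\| \leq R_1\} \subseteq D_{[l_1,l_2]} =: C$, so that $PV(\z) \leq \vartheta V(\z)$ holds for every $\z \in C^c$. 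By Proposition~\ref{prop:D-6-7smallSets}, $C$ is a small set (hence petite), as required by the Geometric Ergodic Theorem.

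It then suffices to establish $b := \sup_{\z \in C} PV(\z) < \infty$: on $C$ we obtain $PV(\z) \leq b \leq \vartheta V(\z) + b$ (using $V \geq 1$), while on $C^c$ the drift is already in hand, giving $PV \leq \vartheta V + b\,1_C$ globally, which is exactly \eqref{eq:driftgerd}. The hard part will be this uniform boundedness of $PV$ on $C$. Using the kernel decomposition of Proposition~\ref{prop:kernelplus}, $PV(\z)$ splits into an unsuccessful-step piece $V(\z \factonefifth^{1/q})\,\Pr(f(\z+\N) > f(\z))$, which is bounded on $C$ by compactness together with the fact that $\z \mapsto V(\z \factonefifth^{1/q})$ is bounded on $C$ (only the jump across the level set $\{f = 1\}$ can break continuity, and it is a bounded jump), and a successful-step piece $\int V(\bar\uu)\,1_{\{f(\bar\uu) \leq f(\z/\factonefifth)\}}\,\factonefifth\,\pp(\factonefifth \bar\uu - \z)\,d\bar\uu$; the truncation $f(\bar\uu) \leq f(\z/\factonefifth) \leq l_2/\factonefifth^{\alpha}$ is uniform in $\z \in C$, and the integral is dominated by a Gaussian-weighted integral of $V$.

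The main obstacle is therefore the integrability of $V$ against the Gaussian density, where the only delicate contribution is $V(\bar\uu) = 1/f(\bar\uu)$ near $\bar\uu = 0$: by Lemma~\ref{lem:too}, $1/f(\bar\uu) \lesssim 1/(m\|\bar\uu\|^{\alpha})$, and the integral $\int_{\|\bar\uu\|\leq 1} \|\bar\uu\|^{-\alpha}\pp(\factonefifth\bar\uu - \z)\,d\bar\uu$ is uniformly bounded in $\z \in C$ under a mild integrability condition on the exponent $\alpha$, the exact statement of which will be supplied by a companion integrability lemma. Once boundedness of $PV$ on $C$ is in hand, the geometric drift \eqref{eq:driftgerd} follows immediately with the constants $\vartheta$, $b$ and small set $C$ constructed above.
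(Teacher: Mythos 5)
Your localization argument is exactly the paper's: use the two limits to get $PV(\z)\leq\vartheta V(\z)$ for $\|\z\|$ small or large, convert the norm thresholds into $f$-thresholds through \eqref{eq:LB-UB} (Lemma~\ref{lem:too}) so that the drift inequality holds off a set $D_{[l_{1},l_{2}]}$, and invoke the results of Section~\ref{small-sets} to conclude this set is small, hence petite. Where you depart from the paper is in insisting on the finiteness of $b$, i.e.\ $\sup_{\z\in C}PV(\z)<\infty$: the paper's proof of this lemma is silent on that point and simply declares the drift condition established, deferring the needed integrability to Lemma~\ref{lem-integra-V} (which shows $\int V(\y)P(\z,d\y)<\infty$, under $\alpha\leq n$, for the concrete $V$ of \eqref{eq:driftOPO}) at the moment the lemma is applied in Theorem~\ref{prop:OPO}. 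Your instinct here is sound --- without a bound on $PV$ over $C$ the inequality \eqref{eq:driftgerd} cannot be asserted with a finite $b$ --- but note that this step cannot be carried out from the lemma's stated hypotheses, since $V$ there is an arbitrary function $\geq 1$ finite at one point and nothing is assumed about $PV$ on the annulus; your argument quietly imports the specific form $V=f\,1_{\{f\geq1\}}+\frac1f 1_{\{f<1\}}$ and a condition on $\alpha$ (your ``companion integrability lemma'' is precisely the paper's Lemma~\ref{lem-integra-V}, which needs $\alpha\leq n$ from Assumption~\ref{ass:f2}). So, read literally, you are proving a slightly strengthened statement (the lemma plus the hypothesis $\sup_{C}PV<\infty$, verified for the intended $V$), which is arguably the more honest formulation; the paper instead keeps this verification outside the lemma. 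Your Gaussian-domination sketch for the successful-step piece and the compactness argument for the unsuccessful-step piece are correct, the only caveat being that the deferred integrability near $\bar\uu=0$ genuinely requires the restriction on $\alpha$ and is not ``mild'' enough to drop.
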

\mathnote{in Geometric Convergence and Central Limit Theorems for Multidimensional Hastings and Metropolis Algorithms Author(s): G. O. Roberts and R. L. Tweedie, a if and only if condition for geometric ergodicity is stated with a lim sup $PV(\z)/V(\z)$.}
\begin{proof}
According to Lemma~\ref{lem:smallset}, the sets $\DD=\{ \z \in \ZZ, l_{1} \leq f(\z) \leq l_{2} \}$ with $0 < l_{1} < l_{2}$ are small sets for $\Z$.
The limit \eqref{eq:toub} (left) gives that for all $\epsilon > 0$ small enough, there exists $a_{2}$ such that for $\| \z\| \geq a_{2}$, $PV(\z) \leq (1 - \epsilon) V(\z) $. According to \eqref{eq:LB-UB} it implies the existence of $l_{2}$ such that for all $\z$ with $f(\z) \geq l_{2}$, $PV(\z) \leq (1 - \epsilon) V(\z) $. Similarly, the limit \eqref{eq:toub} (right)  gives  that for all $\epsilon > 0$ small enough, there exists $a_{1} > 0$ such that for $\| \z \| \leq a_{1}$, $PV(\z) \leq (1 - \epsilon) V(\z)$.  According to \eqref{eq:LB-UB} it implies the existence of $l_{1}$ such that for all $\z$ with  $f(\z) \leq l_{1}$, $PV(\z) \leq (1 - \epsilon) V(\z)$. Hence taking $\vartheta = 1-\epsilon$ for epsilon small enough, we have that outside the small set $\DD$, $PV(\z) \leq \vartheta V(\z)$. 
\end{proof}

\paragraph{Technical Results}
Before to establish the main proposition of this section, we derive a few technical results.
\begin{lemma}\label{lem:yop}
Assume that $f$ is continuous on $\R^{n}$ and for all $\n \in \R^{n}$, $f(\n) > f(0)$ then
$
\lim_{\| \z \| \to 0} 
\Pr(f(\z+\aN) > f(\z)) = 1 
\enspace,
$
where $\aN \sim \aN(0,\Id)$.
\end{lemma}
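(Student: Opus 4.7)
The plan is to apply a bounded/dominated convergence argument to the indicator $1_{\{f(\z+\aN)>f(\z)\}}$ viewed as a function of $\aN$, using the continuity of $f$ together with the assumption that $f(\n)>f(0)$ for all $\n\neq 0$ (read here as the obvious strict‐inequality form, since at $\n=0$ equality is unavoidable).

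First I would fix an arbitrary sequence $(\z_k)$ in $\ZZ$ with $\|\z_k\|\to 0$ and rewrite the probability as
\[
\Pr\bigl(f(\z_k+\aN)>f(\z_k)\bigr) \;=\; E\bigl[\,1_{\{f(\z_k+\aN)>f(\z_k)\}}\bigr].
\]
Since $\aN\sim\N(0,\Id)$, the event $\{\aN\neq 0\}$ has probability one; it therefore suffices to show that for almost every realization $\n$ of $\aN$ one has
\[
1_{\{f(\z_k+\n)>f(\z_k)\}}\xrightarrow[k\to\infty]{}1 .
\]
Fix such an $\n\neq 0$. By continuity of $f$ on $\R^n$ we have $f(\z_k+\n)\to f(\n)$ and $f(\z_k)\to f(0)$ as $k\to\infty$. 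Since $f(\n)>f(0)$ by hypothesis, we obtain $f(\z_k+\n)-f(\z_k)\to f(\n)-f(0)>0$, hence $f(\z_k+\n)>f(\z_k)$ for all sufficiently large $k$, which gives the pointwise convergence of the indicator to $1$.

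Finally, since the indicators are bounded by $1$ and converge a.s.\ to $1$, the bounded convergence theorem yields
\[
\Pr\bigl(f(\z_k+\aN)>f(\z_k)\bigr)\xrightarrow[k\to\infty]{}1.
\]
As the sequence $(\z_k)$ with $\|\z_k\|\to 0$ was arbitrary, this establishes $\lim_{\|\z\|\to 0}\Pr(f(\z+\aN)>f(\z))=1$. No real obstacle is anticipated; the only subtlety is the (harmless) fact that the hypothesis $f(\n)>f(0)$ is to be used in its strict form on $\R^n\setminus\{0\}$, which carries no cost because the exceptional singleton $\{0\}$ has Gaussian measure zero.
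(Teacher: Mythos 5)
Your proof is correct and follows essentially the same route as the paper: write the probability as the expectation of the indicator $1_{\{f(\z+\n)>f(\z)\}}$ against the Gaussian density, observe pointwise convergence of the indicator for every $\n\neq 0$ (a Gaussian-null exception) using continuity of $f$ and the strict inequality $f(\n)>f(0)$, and conclude by dominated (bounded) convergence. The sequence formulation and the remark about the harmless exceptional point $\n=0$ match the paper's argument exactly.
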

\begin{proof}
We express the probability $ \Pr(f(\z+\aN) > f(\z)) $ using the density of $\aN$:
$$
\Pr(f(\z+\aN) > f(\z))  = \int 1_{\{f(\z+\n) - f(\z) > 0\}}(\n) \pp(\n) d\n \enspace.
$$
For all $\n$ except for $\n=0$, $ 1_{\{f(\z+\n) - f(\z) > 0\}}(\n) $ converges to $1_{\{f( \n) - f(0) > 0 \}}(\n)$ when $\z$ goes to $0$ (the function $t \mapsto 1_{\{ t > 0 \}}(t) $ being discontinuous in $t=0$, for $\n=0$, for $\z$ to $0$, we arrive at the discontinuity point of the indicator, hence we cannot conclude about the limit). Hence by the dominated convergence theorem, we find that $\Pr(f(\z+\aN) > f(\z))$ converges to $\Pr(f(\aN) > f(0)) = 1$.
\end{proof}
\begin{lemma}\label{lem:limitps}
Assume that $f$ is a positively homogeneous function with degree $\alpha$ and $f(\x) >0$ for all $\x \neq 0$  and assume that $f$ is continuously differentiable. Then
\begin{align}\label{eq:loop1}
& \lim_{\|\z\| \to \infty} \Pr ( f(\z + \aN) > f(\z) )  = \frac{1}{2} \\\label{eq:loop2}
&\lim_{\|\z\| \to \infty} \Pr \left( \{ f(\z + \aN) \leq f(\z) \} \cap \{ f(\z+\aN) \geq \factonefifth^{\alpha} \} \right)  = \frac{1}{2}
\end{align}
where $\aN \sim \aN(0,\Id)$.
\end{lemma}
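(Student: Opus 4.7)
The plan is to exploit positive homogeneity to reduce the probability $\Pr(f(\z+\aN)>f(\z))$ to one involving a vanishingly small perturbation and then to invoke Gaussian symmetry via the linearization of $f$. Writing $\z = r\uu$ with $r = \|\z\|$ and $\uu \in \Sphere_{1}$, positive homogeneity of degree $\alpha$ yields the equality of events $\{f(\z+\aN) > f(\z)\} = \{f(\uu + \aN/r) > f(\uu)\}$, so it suffices to prove that $\Pr(f(\uu_k + \aN/r_k) > f(\uu_k)) \to 1/2$ for every sequence $(r_k,\uu_k) \in \Rplusstar \times \Sphere_{1}$ with $r_k \to \infty$. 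By compactness of $\Sphere_{1}$, a standard subsequence argument further reduces the claim to the case $\uu_k \to \uu_\infty \in \Sphere_{1}$.

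For fixed $\n \in \R^{n}$, the one-dimensional mean value theorem applied to $t \mapsto f(\uu_k + t\n/r_k)$ (well-defined and $C^{1}$ since the segment stays bounded away from $0$ for $r_k$ large) provides some $\theta_k(\n) \in (0,1)$ with
$$r_k \bigl(f(\uu_k + \n/r_k) - f(\uu_k)\bigr) = \nabla f\bigl(\uu_k + \theta_k(\n)\n/r_k\bigr) \cdot \n \xrightarrow[k \to \infty]{} \nabla f(\uu_\infty) \cdot \n$$
by continuity of $\nabla f$ on $\Rnstar$. Consequently $1_{\{f(\uu_k + \n/r_k) > f(\uu_k)\}}$ converges to $1_{\{\nabla f(\uu_\infty) \cdot \n > 0\}}$ at every $\n$ with $\nabla f(\uu_\infty) \cdot \n \neq 0$. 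By Euler's theorem combined with $f(\uu_\infty) > 0$ we have $\nabla f(\uu_\infty) \neq 0$, so the exceptional hyperplane is $\pp$-null, and dominated convergence gives $\Pr(f(\uu_k + \aN/r_k) > f(\uu_k)) \to \Pr(\nabla f(\uu_\infty) \cdot \aN > 0) = 1/2$ by symmetry of the centered non-degenerate Gaussian $\nabla f(\uu_\infty) \cdot \aN$. This establishes \eqref{eq:loop1}.

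For \eqref{eq:loop2}, I would decompose
$$\Pr\bigl(f(\z+\aN) \leq f(\z),\, f(\z+\aN) \geq \factonefifth^{\alpha}\bigr) = \Pr(f(\z+\aN) \leq f(\z)) - \Pr\bigl(f(\z+\aN) \leq f(\z),\, f(\z+\aN) < \factonefifth^{\alpha}\bigr).$$
The first term tends to $1/2$ by \eqref{eq:loop1} together with the same dominated-convergence argument applied to the equality event, which yields $\Pr(f(\z+\aN) = f(\z)) \to \Pr(\nabla f(\uu_\infty) \cdot \aN = 0) = 0$. For the second term, Lemma~\ref{lem:too} gives the inclusion $\{f(\z+\aN) < \factonefifth^{\alpha}\} \subset \{\|\z+\aN\| < (\factonefifth^{\alpha}/m)^{1/\alpha}\}$, whose probability is bounded by $\Pr(\|\aN\| > \|\z\| - (\factonefifth^{\alpha}/m)^{1/\alpha})$ and thus vanishes as $\|\z\| \to \infty$.

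The main technical subtlety is the a.e.\ convergence of the indicator combined with the need to exclude degeneracy of the limiting Gaussian $\nabla f(\uu_\infty) \cdot \aN$, which is precisely where Euler's identity (forcing $\nabla f \neq 0$ away from the origin) comes in; the rest is bookkeeping with the subsequence argument and a routine Gaussian tail estimate.
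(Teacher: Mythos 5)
Your proof is correct, and while it shares the paper's core ingredients (rescaling by homogeneity, the mean value theorem plus continuity of $\nabla f$, dominated convergence, Gaussian symmetry, and Euler's identity to guarantee $\nabla f \neq 0$ away from the origin), it globalizes the limit differently and treats \eqref{eq:loop2} by a genuinely different decomposition. The paper rescales by $f(\z)^{1/\alpha}$ so that the direction lives on the level set $\LL_{1}$, packages the success probability as a two-variable function $(\uu,v)\mapsto \Pr\left(\nabla f(\uu + c_{\aN} v \aN)\cdot\aN > 0\right)$ on the compact $\LL_{1}\times[0,1]$, proves its continuity by dominated convergence and then invokes uniform continuity to get an estimate uniform in the direction; you instead rescale by $\|\z\|$ and run a subsequence-compactness argument on $\Sphere_{1}$, applying dominated convergence directly along a sequence with converging directions. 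Your route is somewhat more elementary (no continuity of a parametrized integral, no uniform-continuity step), and as a side benefit your exceptional set is the hyperplane orthogonal to $\nabla f(\uu_\infty)$, which is Lebesgue-null in every dimension $n\geq 1$, so you do not need the paper's separate treatment of the case $n=1$ (where the paper's excluded set, a line through the limiting direction, is not null). For \eqref{eq:loop2} the paper repeats the whole continuity-and-prolongation argument for a second auxiliary function carrying the extra indicator $1_{\{f(\uu/v+\n)\geq\factonefifth^{\alpha}\}}$; your decomposition into $\Pr(f(\z+\aN)\leq f(\z))$ minus the probability of the low sublevel event, the latter killed by the lower bound of Lemma~\ref{lem:too} and a routine Gaussian tail estimate, is shorter and yields the same conclusion. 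One minor remark: in that decomposition the first term is exactly the complement of the event in \eqref{eq:loop1}, so your aside about the equality event $\{f(\z+\aN)=f(\z)\}$ is superfluous (though the claim made there is true).
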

\begin{proof}
\newcommand{\g}{g_{\ref{lem:limitps}}}
\newcommand{\hh}{h_{\ref{lem:limitps}}}
We investigate first the limit \eqref{eq:loop1} and want to prove that 
\begin{equation}\label{eq:topp}
\forall \epsilon, \exists\ T > 0, \mbox{ such that for all } \z \mbox{ with } \| \z \| > T, | \Pr( f(\z + \aN) > f(\z) ) - \frac12 | < \epsilon \enspace.
\end{equation}
Let us fix one arbitrary $\epsilon$ for the rest of the proof and use the homogeneity property to write
$ \Pr ( f(\z + \aN) > f(\z) ) = \Pr( f(\frac{\z}{f(\z)^{1/\alpha}} + \frac{\aN}{f(\z)^{1/\alpha}} ) > 1 )$. Since $f$ is continuously differentiable, the mean value theorem gives us the existence for all $\n \in \R^{\dim}$ of $c_{\n} \in [0,1]$ such that
\begin{equation}\label{zoup1}
f\left( \frac{\z}{f(\z)^{\frac1\alpha}} + \frac{\n}{f(\z)^{\frac1\alpha}} \right) = \underbrace{f\left(\frac{\z}{f(\z)^{\frac1\alpha}}\right)}_{=1} + \nabla f \left( \frac{\z}{f(\z)^{\frac1\alpha}} + c_{\n} \frac{\n}{f(\z)^{\frac1\alpha}} \right) . \frac{\n}{f(\z)^{\frac1\alpha}} \enspace.
\end{equation}
The event $\{ f(\frac{\z}{f(\z)^{\frac{1}{\alpha}}} + \frac{\aN}{f(\z)^{\frac{1}{\alpha}}} ) > 1 \}$ thus equals  $\{ \nabla f \left( \frac{\z}{f(\z)^{\frac1\alpha}} + c_{\aN} \frac{\aN}{f(\z)^{\frac1\alpha}} \right) . \frac{\aN}{f(\z)^{\frac1\alpha}} > 0 \}$ also equal to the event $\{ \nabla f \left( \frac{\z}{f(\z)^{\frac1\alpha}} + c_{\aN} \frac{\aN}{f(\z)^{\frac1\alpha}} \right) . \aN > 0 \} $. Let us define the function $\g: \LL_{1} \times [0,1] $ as follows 
\begin{equation}\label{eq:fun1}
\g(\uu,v) = \Pr \left( \nabla f (\uu + c_{\aN} v \aN). \aN > 0 \right)
\end{equation}
such that $\Pr(f(\z + \aN) > f(\z)) = \g \left( \frac{\z}{f(\z)^{1/\alpha}}, \frac{1}{f(\z)^{1/\alpha}} \right) $. (Given the definition domain of $\g$ we have assumed that $\z$ is large enough such that $f(\z) \geq 1$.) We now prove the continuity of $\g$ that we express with its integral form as
$$
\g(\uu,v) = \int 1_{\{ \nabla f (\uu + c_{\n} v \n). \n > 0 \}}(\n) \pp(\n) d \n \enspace.
$$
Because we have assumed that the differential of $f$ is continuous, for all $\n$, the function $(\uu,v) \mapsto \nabla f (\uu + c_{\n} v \n). \n $ is continuous. The indicator function has one discontinuity point that could be reached if $\nabla f (\uu + c_{\n} v \n). \n = 0$. We thus exclude the point $\n=0$. In addition, with Property~\eqref{prop-grad},  $\nabla f(\uu + c_{\n} v \n) \neq 0$ if $\uu + c_{\n} v \n \neq 0$. Hence, given $(\uu_{0},v_{0})$ where we want to prove the continuity of $\g$, let  $\mathfrak{N}=\{ \n | \n = \alpha \uu_{0}, \alpha \in \R \}$ (this is a set of null measure provided $n \geq 2$) then for all $\n \in \R^{n} \backslash \mathfrak{N}$, the function $(\uu,v) \mapsto 1_{\{ \nabla f (\uu + c_{\n} v \n). \n > 0 \}}(\n) \pp(\n)$ is continuous in $(\uu_{0},v_{0}) $ and by the dominated convergence theorem we deduce the continuity of $\g(\uu,v)$ on  $\LL_{1} \times [0,1]$. By symmetry of $\pp(\n)$, for all $\uu$, 
$$
\g(\uu,0)= \int 1_{\{ \nabla f(\uu).\n > 0 \}}(\n) \pp(\n) d\n = 1/2 \enspace.
$$
Since $\g$ is continuous on a compact, it is uniformly continuous and hence there exists $\beta > 0$ such that for all $v \leq \beta$, and $\uu \in \LL_{1}$,  $|\g(\uu,v) - \g(\uu,0)| \leq \epsilon$ \enspace.
Taking $T' = \frac{1}{\beta}$, we then have that if $f(\z)^{1/\alpha} \geq T' $, $|\g(\frac{\z}{f(\z)^{1/\alpha}},\frac{1}{f(\z)^{1/\alpha}}) - \frac{1}{2} | \leq \epsilon$. From Lemma~\ref{lem:too}, we find that if $\| \z \| \geq T:=T'/M^{1/\alpha} $, then $f(\z)^{1/\alpha} \geq T' $ and $|\g(\frac{\z}{f(\z)^{1/\alpha}},\frac{1}{f(\z)^{1/\alpha}}) - \frac{1}{2} | \leq \epsilon$. Hence we have proven \eqref{eq:topp} that proves \eqref{eq:loop1} in the case $n \geq 2$. The case $n=1$ is even simpler and boils down to look at the limit when $v$ goes to $0$ of $\int 1_{\{ f( \uu + v \n ) >1  \}}(\n) p(\n) d \n = \int_{\R^{+}} 1_{\{ f'(\uu) + o(1) > 0  \}}(\n) p(\n) d \n  + \int_{\R^{-}} 1_{\{ - f'(\uu) + o(1) > 0  \}}(\n) p(\n) d \n$. Using the dominated convergence theorem, this latter limit equals $1/2$.\\
\noindent In a similar manner we investigate the limit \eqref{eq:loop2}. Using \eqref{zoup1}, we define in a similar manner on $\LL_{1} \times [0,1( $ the function
$$
\hh(\uu,v) = \int 1_{\{ \nabla f ( \uu + c_{\n} v \n). \n < 0 \}}(\n) 1_{\{ f( \uu/v + \n) \geq \factonefifth^{\alpha} \}}(\n) \pp(\n) d \n \enspace.
$$
Similarly we prove the continuity of $\hh$ on $\LL_{1} \times )0,1( $ and prolong it by continuity for $v = 0$ using the fact that for all $\n$, $\lim_{v \to 0} 1_{\{ f( \uu/v + \n) \geq \factonefifth^{\alpha} \}} = 1 $. We find then that
$$
\hh(\uu,0) = \int 1_{\{ \nabla f ( \uu). \n < 0 \}}(\n) \pp(\n) d \n = \frac12 \enspace.
$$
Like in the previous case we prove that for all $\epsilon$, there exists $T>0$ such that
$$
%\forall \epsilon,  \exists T > 0, \mbox{ such that for all } 
\mbox{for all } \| \z \| > T, |  \Pr \left( \{ f(\z + \aN) \leq f(\z) \} \cap \{ f(\z+\aN) \geq \factonefifth^{\alpha} \} \right)  - \frac12 | < \epsilon
$$ 
that proves \eqref{eq:loop2}. We omit the details as the proof follows the same lines as before.
\end{proof}

\begin{lemma}\label{lem-integra-V}
Let $f$ be a positively homogeneous function with degree $\alpha$ satisfying $f(\x) > 0$ for all $\x \neq 0$. Assume that $f$ is continuous on $\Rnstar$. Let $\aN$ denote a standard multivariate normal distribution. Then for all $\z$
\begin{align}\label{eq:u2-1}
& \mbox{If } \alpha \leq 1, E \left[ f(\z+\aN) \right] \leq M (\| \z \| + E(\| \aN \|))^{\alpha}  < + \infty \enspace, \\\label{eq:u2-1prime}
& \mbox{If } \alpha \geq 1, E \left[ f(\z+\aN) \right] \leq M (\| \z \| + E[\| \N \|^{\alpha}]^{1/\alpha})^{\alpha} < + \infty \enspace.
\end{align}
If in addition, $\alpha \leq n$, then there exists a constant $c_{\ref{lem-integra-V}}$ such that for all $\z $
\begin{equation}\label{eq:u2-2}
E \left[  \frac{1}{f(\z+ \aN)} \right] < c_{\ref{lem-integra-V}} \enspace.
\end{equation}
Consequently if $\alpha \leq n$, the function $V(\z)= f(\z) 1_{\{ f(\z) \geq  1 \}} + \frac{1}{f(\z)} 1_{\{ f(\z) <  1 \}}$ satisfies  for all $\z \in \Rnstar$
\begin{equation}\label{eq:u2-3}
\int V(\y) P(\z, d \y) < \infty \enspace.
\end{equation}
\end{lemma}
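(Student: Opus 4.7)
The plan is to handle the three estimates \eqref{eq:u2-1}--\eqref{eq:u2-2} separately, then assemble them for \eqref{eq:u2-3}. All three rest on the two-sided bound $m \|\y\|^{\alpha} \le f(\y) \le M \|\y\|^{\alpha}$ provided by Lemma~\ref{lem:too}, which converts $f$-estimates into $\|\cdot\|$-estimates.

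For \eqref{eq:u2-1} and \eqref{eq:u2-1prime}, I would write $f(\z+\aN) \le M\|\z+\aN\|^{\alpha} \le M(\|\z\|+\|\aN\|)^{\alpha}$. When $\alpha \le 1$ the map $x\mapsto x^{\alpha}$ is concave, so Jensen's inequality gives $E[(\|\z\|+\|\aN\|)^{\alpha}] \le (\|\z\|+E\|\aN\|)^{\alpha}$, and $E\|\aN\|$ is finite. When $\alpha \ge 1$, Minkowski's inequality in $L^{\alpha}$ applied to the constant $\|\z\|$ and the random variable $\|\aN\|$ yields $E[(\|\z\|+\|\aN\|)^{\alpha}]^{1/\alpha} \le \|\z\|+E[\|\aN\|^{\alpha}]^{1/\alpha}$; finiteness of $E\|\aN\|^{\alpha}$ is standard for Gaussians. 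Raising to $\alpha$ gives the stated bounds.

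For \eqref{eq:u2-2}, using $f(\y) \ge m\|\y\|^{\alpha}$ reduces the problem to a uniform bound on $E[\|\z+\aN\|^{-\alpha}] = \int \|\y\|^{-\alpha} \pp(\y-\z)\,d\y$. I would split the integral at $\|\y\|=1$. On $\{\|\y\|>1\}$ the integrand is bounded by $\pp(\y-\z)$, contributing at most $1$. On $\{\|\y\|\le 1\}$ bound $\pp(\y-\z)\le (2\pi)^{-n/2}$ (a bound independent of $\z$), and evaluate $\int_{\|\y\|\le 1} \|\y\|^{-\alpha} d\y$ in polar coordinates as a multiple of $\int_{0}^{1} r^{n-1-\alpha}\,dr$, which is finite provided $\alpha<n$. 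This is the main technical obstacle: the statement is written with $\alpha\le n$ but the polar integral only converges for $\alpha<n$, so I would verify whether the lemma is actually to be applied only in the strict case (the bound $c_{\ref{lem-integra-V}}$ depends on $n$ and $\alpha$ through $1/(n-\alpha)$ and blows up at $\alpha=n$). Combining the two pieces gives a constant independent of $\z$.

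For \eqref{eq:u2-3}, I would plug the kernel of Proposition~\ref{prop:kernelplus} into $\int V(\y)P(\z,d\y)$, splitting it as
\begin{equation*}
\int V(\uu)\,\qq(\z,\uu)\,d\uu \;+\; V(\z\gamma^{1/q})\,\Pr(f(\z+\aN)>f(\z)).
\end{equation*}
The second term is finite because $\z\neq 0$ forces $\z\gamma^{1/q}\neq 0$ and $V$ is finite off the origin. For the first term I would use $V(\uu)\le f(\uu)+1/f(\uu)$ on $\Rnstar$, drop the indicator $1_{\{f(\uu)\le f(\z/\gamma)\}}$ (upper bound), and perform the change of variables $\n=\gamma\uu-\z$ together with the homogeneity $f((\n+\z)/\gamma)=\gamma^{-\alpha}f(\n+\z)$ to reduce the integral to a constant times $E[f(\z+\aN)] + E[1/f(\z+\aN)]$. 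The former is finite by \eqref{eq:u2-1}--\eqref{eq:u2-1prime}, the latter by \eqref{eq:u2-2}, concluding the proof.
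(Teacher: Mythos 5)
Your proof follows the paper's own argument essentially step for step: the same two-sided bound $m\|\y\|^{\alpha}\le f(\y)\le M\|\y\|^{\alpha}$ from Lemma~\ref{lem:too}, Jensen for $\alpha\le 1$ and Minkowski for $\alpha\ge 1$ to get \eqref{eq:u2-1}--\eqref{eq:u2-1prime}, the split of $\int\|\y\|^{-\alpha}\pp(\y-\z)d\y$ at $\|\y\|=1$ with a $\z$-independent bound on the Gaussian density and polar coordinates for \eqref{eq:u2-2}, and for \eqref{eq:u2-3} a decomposition into the success (absolutely continuous) and failure (singular) contributions reduced by homogeneity to $E[f(\z+\aN)]$, $E[1/f(\z+\aN)]$, $f(\z)$ and $1/f(\z)$; phrasing the last step through the kernel of Proposition~\ref{prop:kernelplus} instead of directly through the update \eqref{eq:transitionZ} is only a cosmetic difference.

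The boundary issue you flag is real, and it is the one place where your scrutiny exceeds the paper's: the paper asserts that $\int_{0}^{1} r^{n-1-\alpha}dr$ is finite for $\alpha\le n$, but it converges only for $\alpha<n$ (and likewise the $n=1$ case needs $\alpha<1$). At $\alpha=n$ the conclusion \eqref{eq:u2-2} genuinely fails, not just the proof: since $f(\y)\le M\|\y\|^{n}$, one has $E[1/f(\z+\aN)]\ge \frac1M\int\|\y\|^{-n}\pp(\y-\z)\,d\y=\infty$ because the Gaussian density is bounded below near the singularity. So your argument is correct precisely on the range $\alpha<n$ where the statement holds; your caution is not a gap in your proof but a (minor) correction to the lemma as stated, which propagates to the condition $\alpha\le n$ in Assumption~\ref{ass:f2}.
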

\begin{proof}
We start by proving \eqref{eq:u2-1} and \eqref{eq:u2-1prime}. Note first that $E[\| \N \|^{\alpha}] < \infty$ for all $\alpha > 0$. \mathnote{is a consequence from the form the density of a multivariate normal distribution.} According to Lemma~\ref{lem:too}, $f(\z+ \aN) \leq M \| \z + \aN \|^{\alpha}$. From the triangle inequality we obtain
\begin{equation}\label{eq:TIne}
\| \z + \aN \|^{\alpha} \leq ( \| \z \| + \| \aN \|)^{\alpha} \enspace.
\end{equation}
For $\alpha < 1$, $x \in [0,+\infty] \mapsto x^{\alpha}$ being concave, we obtain from Jensen inequality that $E[ ( \| \z \| + \| \aN \|)^{\alpha}] \leq (E[\| \z \| + E(\| \aN\|)])^{\alpha} = (\| \z \| + E(\| \aN \|))^{\alpha} $ which achieves to prove \eqref{eq:u2-1} (the case for $\alpha=1$ being the equality case for the last equations).
For $\alpha \geq 1$, we can apply the Minkowski inequality stating that $E[( \| \z \| + \| \aN \|)^{\alpha}]^{1/\alpha} \leq E[\| \z \|^{\alpha}]^{1/\alpha} + E[\| \N \|^{\alpha}]^{1/\alpha} = \| \z \| + E[\| \N \|^{\alpha}]^{1/\alpha}$.  Hence $E( ( \| \z \| + \| \aN \|)^{\alpha}) \leq 
(\| \z \| + E[\| \N \|^{\alpha}]^{1/\alpha})^{\alpha} $. Overall using the upper bound on $f(\z+ \aN)$ and \eqref{eq:TIne}, we find \eqref{eq:u2-1prime}.
\mathnote{If $0 < r \leq s$,  $E[| X |^{s}] < \infty$, $\Rightarrow$ $E[|X|^{r}] < \infty$. This actually comes from the Hoelder inequality and using the fact that we have a finite measure. + Minkowski inequality. If $r \geq 1$, $E(| X + Y|^{r})^{1/r} \leq E[|X|^{r}]^{1/r} + E[|Y|^{r}]^{1/r} $ }
\mathnote{Proposition 4.5 in John K. Hunter, Measure Theory. Consider $f$ $g$ taking values in $[0 \infty]$, if $0 \leq f \leq g$ then $0 \leq \int f \leq \int g$.}
We prove now \eqref{eq:u2-2}. We are writing in the sequel integrals of positive functions that are possibly infinite. We will prove actually that the functions are integrable (and the integrals finite) and prove that we have a bound for the integral independent of $\z$. Using Lemma~\ref{lem:too}
\begin{equation}\label{youm1}
A_{\ref{lem-integra-V}}=E \left[  \frac{1}{f(\z+ \aN)} \right] \leq E \left[ \frac{1}{m \| \z +  \aN   \|^{\alpha}} \right]
= \frac{1}{m} \int_{\R^{n}} \frac{1}{\| \z + \y\|^{\alpha}} \pp(\y) d\y
%\leq \frac{1}{m} E \left[  \frac{1}{(( \| \aN \| - \| \z \|  )^{2} )^{\alpha/2}} \right]
\end{equation}
Using a change of variables
\begin{equation}\label{youm2}
\int_{\R^{n}} \frac{1}{\| \z + \y\|^{\alpha}} \pp(\y) d\y = \int_{\R^{n}} \frac{1}{\| \yy \|^{\alpha}} \pp( \yy - \z) d \yy \enspace.
\end{equation}
The previous integral is possibly infinite because the function $\yy \mapsto \frac{1}{\| \yy \|^{\alpha}}$ has a singularity in zero. Let us study the integrability close to zero, i.e., investigate
$$
\int_{\Ball(0,1)} \frac{1}{\| \y \|^{\alpha}} \pp( \y - \z) d \y \leq K_{\ref{lem-integra-V}} \int_{\Ball(0,1)} \frac{1}{\| \y \|^{\alpha}} d \y
$$
where $K_{\ref{lem-integra-V}} $ is an upper bound on the density $\pp$ (hence independent of $\z$).
Using spherical coordinates for $n\geq 2$
\begin{multline*}
\int_{\Ball(0,1)} \frac{1}{\| \y \|^{\alpha}} d \y = \int_{0}^{1} \frac{r^{n-1}}{r^{\alpha}}  d r \prod_{i=1}^{n-2} \int_{0}^{\pi} \sin^{n-1-i}(\varphi_{i}) d \varphi_{i} \int_{0}^{2 \pi} d \varphi_{n-1} \\ \leq 2 (\pi)^{n-1} \int_{0}^{1} r^{n-1-\alpha} d r \enspace.
\end{multline*}
The latter integral is finite for $\alpha + 1 -n \leq 1$, i.e., $\alpha \leq n$. For $n=1$ we directly obtain $\int_{-1}^{1} \frac{1}{| \y |^{\alpha}} d \y < \infty $ if $\alpha \leq 1$.
To prove that $A_{\ref{lem-integra-V}}$ is bounded for all $\z$ by a constant independent of $\z$, we write
\begin{align*}
\int_{\R^{n}} \frac{1}{\| \y \|^{\alpha}} \pp(\y - \z) d \y & \leq K_{\ref{lem-integra-V}} \int_{\R^{n}} \frac{1}{\| \y \|^{\alpha}}  1_{\{ \| \y \| \leq 1  \}} d \y + \int_{\R^{n}} \frac{1}{\| \y \|^{\alpha}} 1_{\{ \| \y \| \geq 1  \}} \pp(\y - \z) d \y \\
& \leq K_{\ref{lem-integra-V}} \int_{B(0,1)} \frac{1}{\| \y \|^{\alpha}} d \y + \underbrace{\int_{\R^{n}}  \pp(\y - \z) d \y}_{=1}
\end{align*}
Hence $\int_{\R^{n}} \frac{1}{\| \y \|^{\alpha}} \pp(\y - \z) d \y$ is bounded by a constant independent of $\z$ if $\alpha \leq n$. Using this with \eqref{youm1} and \eqref{youm2} proves that $A_{\ref{lem-integra-V}}$ is bounded by a constant independent of $\z$.\\
Finally we prove \eqref{eq:u2-3}: Using the expression of the Markov chain given in \eqref{eq:transitionZ} denoting $\aN$ a standard normal distribution we find that $\int V(\y) P(\z,d\y) =$
\begin{multline*}
 E\left[ f \left( \frac{\z+\aN}{\factonefifth} \right) 1_{\{ f(\z+\aN) \leq f(\z) \}} 1_{\{f(\frac{\z+ \aN}{\factonefifth}) \geq 1  \}} \right]
+ E \left[ \frac{1_{\{f(\z + \aN) \leq f(\z) \}} 1_{\{ f((\z + \aN)/\gamma ) < 1  \}}}{f\left(\frac{\z+\aN}{\factonefifth}\right)}  \right] + \\
E \left[ f\left(\frac{\z}{\factonefifth^{-1/q}}\right) 1_{\{ f(\z+\aN) > f(\z) \}} 1_{\{f(\z/\factonefifth^{-\frac1q}) \geq 1  \}} \right]
%+%\\%E \left[ \frac{1_{\{f(\z + \aN) \leq f(\z) \}} 1_{\{ f((\z + \aN)/\gamma ) < 1  \}}}{f\left(\frac{\z+\aN}{\factonefifth}\right)}  \right]
+
E \left[ \frac{1_{\{f(\z + \aN) > f(\z) \}} 1_{\{ f(\z/\factonefifth^{-1/q}) < 1  \}}}{f(\z/\factonefifth^{-1/q})}   \right] \\
\leq \frac{1}{\factonefifth^{\alpha}} E \left[ f(\z+ \aN) \right] + \frac{1}{\gamma^{-\frac{\alpha}{q}}} f(\z) + \factonefifth^{\alpha} E \left[ \frac{1}{f(\z+ \aN)} \right] + \factonefifth^{-\frac{\alpha}{q}} \frac{1}{f(\z)} \enspace.
\end{multline*}
Using now \eqref{eq:u2-1}, \eqref{eq:u2-1prime} and \eqref{eq:u2-2} in the previous inequality we find \eqref{eq:u2-3}.
\end{proof}

\paragraph{Sufficient conditions for geometric ergodicity}
We are now ready to establish the main result of this section, namely a sufficient condition for geometric ergodicity. We need to make some further assumptions on the objective function that we gather as Assumption~\ref{ass:f2}.
\begin{assumption}\label{ass:f2}
The function $f: \R^{n} \to [0,+\infty[ $ satisfies Assumptions~\ref{ass:f}, i.e., is a positively homogeneous function with degree $\alpha$ and $f(\x) > 0$ for all $\x \neq 0$.\\
The function $f$ is continuously differentiable and $\alpha \leq n$.  
%For all $\zzz$ in $\LL_{1}$, all $\gamma_{0} \in \R^{+}$ there exists some constants $c_{0}, c_{1}, c_{3}$ in $\R$ and $k \in \NNN$ such that 
%%
%\begin{equation}\label{eq:assOPOmajgradient}
%\| \nabla f(\zzz+\gamma_{0} c \y)  \|^{2} \leq c_{0} + c_{1} c^{c_{3}} \| \y \|^{k}
%\end{equation}
%%
%for all $c \in [0,1]$ and all $\y \in \R^{n}$.\\
There exists $k \in \Nplus$, $c_{0}, \ldots, c_{k}$ in $\R$ such that for all $\zzz \in \LL_{1}$, $\y \in \R^{n}$, $c_{\zzz}, c_{\y} \in [0,1]$
\begin{equation}\label{eq:assOPOmajgradient}
\| \nabla f(\zzz+ c_{\zzz} c_{\y} \y)  \|^{2} \leq c_{0} + \sum_{i=1}^{k} c_{i} \| \y \|^{i} \enspace.
\end{equation}
\end{assumption}
In the next lemma, we verify that convex-quadratic functions satisfy the previous assumptions if $n \geq 2$.
\begin{lemma}\label{lem:CQ}
Let $f(\x) = \frac12 \x^{T} H \x$ with $H$ symmetric positive definite. It satisfies Assumptions~\ref{ass:f2} if $n \geq 2$. 
\end{lemma}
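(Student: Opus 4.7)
The claim decomposes into four verifications matching the four requirements of Assumption~\ref{ass:f2}. The first three are routine linear-algebra checks that I would dispatch quickly; the fourth (the polynomial bound on the gradient) is the only one needing a small argument.

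First I would check positive homogeneity: for any $\rho>0$ and $\x$, $f(\rho\x)=\frac12(\rho\x)^{T}H(\rho\x)=\rho^{2}f(\x)$, so $f$ is positively homogeneous with degree $\alpha=2$. Positivity $f(\x)>0$ for $\x\neq 0$ follows from $H$ being symmetric positive definite (so $\x^{T}H\x\geq \lambda_{\min}(H)\|\x\|^{2}>0$). Continuous differentiability is clear with $\nabla f(\x)=H\x$. The condition $\alpha\leq n$ becomes $2\leq n$, which is exactly the assumption $n\geq 2$.

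For the gradient bound \eqref{eq:assOPOmajgradient}, I would first use the operator-norm inequality $\|\nabla f(\x)\|=\|H\x\|\leq \|H\|\,\|\x\|$, so that
\begin{equation*}
\|\nabla f(\zzz+c_{\zzz}c_{\y}\y)\|^{2}\leq \|H\|^{2}\,\|\zzz+c_{\zzz}c_{\y}\y\|^{2}.
\end{equation*}
The key observation is that $\LL_{1}$ is bounded: if $\zzz\in\LL_{1}$ then $\frac12\zzz^{T}H\zzz=1$ gives $\|\zzz\|^{2}\leq 2/\lambda_{\min}(H)=:R^{2}$. Combining with the triangle inequality and $c_{\zzz},c_{\y}\in[0,1]$,
\begin{equation*}
\|\zzz+c_{\zzz}c_{\y}\y\|^{2}\leq (\|\zzz\|+\|\y\|)^{2}\leq R^{2}+2R\|\y\|+\|\y\|^{2}.
\end{equation*}
Setting $c_{0}=\|H\|^{2}R^{2}$, $c_{1}=2\|H\|^{2}R$, $c_{2}=\|H\|^{2}$ and $k=2$ yields the required inequality.

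The proof presents no real obstacle; the only subtlety is to remember that the bound must be uniform over $\zzz\in\LL_{1}$, which is handled at once by observing that $\LL_{1}$ is contained in the ball of radius $R=\sqrt{2/\lambda_{\min}(H)}$. The restriction $n\geq 2$ enters only through the condition $\alpha\leq n$ of Assumption~\ref{ass:f2}, not through the gradient bound.
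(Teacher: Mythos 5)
Your proof is correct and follows essentially the same route as the paper's: operator-norm bound $\|\nabla f(\x)\|\leq \|H\|\,\|\x\|$, boundedness of the (sub)level set to control $\|\zzz\|$, triangle inequality, and expansion of the square to get \eqref{eq:assOPOmajgradient} with $k=2$. Your version is in fact slightly cleaner, making the constants explicit via $\lambda_{\min}(H)$ and correctly squaring the matrix norm where the paper's displayed chain is typographically sloppy.
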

\begin{proof}
\newcommand{\tb}{| \! | \! |}
The function $f$ is positively homogeneous with degree $2$. Hence to satisfy the assumption $\alpha \leq n$ we need $n \geq 2$. Moreover, it is continuously differentiable and homogeneous with degree $2$ and satisfies $\nabla f(\x) = H \x $. Hence $\| \nabla H (\zzz+\gamma_{0} c_{\y} \y) \|^{2} \leq \tb H \tb \|  \zzz+ c_{\zzz} c_{\y} \y \|^{2} \leq K ( \| \zzz \| + \| \y \|  )^{2} \leq K (K_{1} + \| \y \|)^{2} = K K_{1} + 2 K K_{1} \| \y \| + K K_{1}^{2} \| \y \|^{2}  $ where $\tb . \tb$ is the induced matrix norm associated to the euclidian norm $\| . \|$ and $K$ is a bound for $\tb H \tb$ and $K_{1}$ a bound for the elements of $\LLL_{1}$. Hence \eqref{eq:assOPOmajgradient} is satisfied with $k=2$.
\end{proof}

We are now ready to state the main result of this section.
%%
%%%%%%%%%%%%%%%%%
%%%% PROPOSITION   %%%%
%%%%%%%%%%%%%%%%%
\begin{theorem}\label{prop:OPO}
Consider $(\Xt,\st)_{t \in \NNN}$, a (1+1)-ES with generalized one-fifth success rule as defined in \eqref{eq:sampling}, \eqref{eq:update-mean} and \eqref{eq:update-ss} optimizing $h = g \circ f$ where $g \in \Monotone$ and $f: \R^{n} \to [0,+\infty[$ satisfies Assumptions~\ref{ass:f2}. Let $\Z=(\Zt=\Xt/\st)_{t \in \NNN}$ be the Markov chain associated to the (1+1)-ES optimizing $h$ defined in \eqref{eq:transitionZ}. Then the function
\begin{equation}\label{eq:driftOPO}
V(\z)= f(\z) 1_{\{ f(\z) \geq  1 \}} + \frac{1}{f(\z)} 1_{\{ f(\z) < 1 \}}
\end{equation}
satisfies a drift condition for geometric ergodicity (in the sense of \eqref{eq:driftgerd}) for the Markov chain $\Z$ if  $\gamma > 1$ and
\begin{equation}\label{eq:inc-linear}
\frac12 \left( \frac{1}{\factonefifth^{\alpha}} + \factonefifth^{\alpha/q} \right) < 1 \enspace.
\end{equation}
\end{theorem}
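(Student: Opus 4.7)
My plan is to invoke Lemma~\ref{lem:CSdrift}, so the task reduces to verifying the two limits
$$
\limsup_{\|\z\|\to\infty}\frac{PV(\z)}{V(\z)} < 1 \quad\text{and}\quad \limsup_{\|\z\|\to 0}\frac{PV(\z)}{V(\z)} < 1 .
$$
For both regimes I would split $PV(\z)$ according to the success event $S=\{f(\z+\N)\le f(\z)\}$ (giving $\Ztt=(\z+\N)/\gamma$) and the failure event $S^c$ (giving $\Ztt=\z\gamma^{1/q}$), and further split each piece according to whether $f(\Ztt)\ge 1$ or $f(\Ztt)<1$ so as to select the correct branch of $V$. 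The integrability that justifies manipulating all these terms is provided by Lemma~\ref{lem-integra-V}.

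\paragraph{Regime $\|\z\|\to\infty$}
Here $V(\z)=f(\z)\to\infty$, and for $\|\z\|$ large enough $f(\z\gamma^{1/q})=\gamma^{\alpha/q}f(\z)\ge 1$ so the failure contribution is exactly $\gamma^{\alpha/q} f(\z)\Pr(S^c)$. The delicate piece is the success contribution. Set $\zzz=\z/f(\z)^{1/\alpha}\in\LL_1$ and $v=1/f(\z)^{1/\alpha}\to 0$. By Euler's theorem and the mean value theorem,
$$
\frac{f(\z+\N)}{f(\z)} = f(\zzz+v\N) = 1 + v\,\nabla f(\zzz+c_\N v\N)\cdot \N ,
$$
and $S=\{\nabla f(\zzz+c_\N v\N)\cdot\N\le 0\}$. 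The gradient bound \eqref{eq:assOPOmajgradient} from Assumption~\ref{ass:f2} gives $\|\nabla f(\zzz+c_\N v\N)\|\le (c_0+\sum c_i\|\N\|^i)^{1/2}$, hence the remainder $v|\nabla f\cdot\N|$ is dominated by an integrable random variable uniformly in $\z$ and tends to $0$ in $L^1$. Combined with Lemma~\ref{lem:limitps} this yields $E[f(\z+\N)/f(\z)\,\mathbf{1}_S]\to 1/2$. The ``wrong branch'' term $E[\gamma^\alpha/f(\z+\N)\,\mathbf{1}_{S,f(\z+\N)<\gamma^\alpha}]/f(\z)$ vanishes because \eqref{eq:u2-2} bounds the numerator uniformly while $f(\z)\to\infty$. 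Collecting everything,
$$
\lim_{\|\z\|\to\infty}\frac{PV(\z)}{V(\z)} = \frac{1}{\gamma^\alpha}\cdot\frac{1}{2} + \gamma^{\alpha/q}\cdot\frac{1}{2} = \frac{1}{2}\!\left(\frac{1}{\gamma^\alpha}+\gamma^{\alpha/q}\right) < 1
$$
by hypothesis \eqref{eq:inc-linear}.

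\paragraph{Regime $\|\z\|\to 0$}
Here $V(\z)=1/f(\z)\to\infty$, and for $\|\z\|$ small enough $f(\z\gamma^{1/q})<1$, so the failure contribution equals $\gamma^{-\alpha/q}/f(\z)\cdot\Pr(S^c)$, i.e.\ $\gamma^{-\alpha/q}V(\z)\Pr(S^c)$. By Lemma~\ref{lem:yop} applied to $f$ (which is continuous on $\R^n$ after extending by $f(0)=0$ and satisfies $f(\n)>0=f(0)$ for $\n\neq 0$), $\Pr(S^c)\to 1$. The success contribution, split into its two branches, is bounded by
$$
\frac{1}{\gamma^\alpha}E[f(\z+\N)]\,f(\z) + \gamma^\alpha E\!\left[\frac{1}{f(\z+\N)}\right] f(\z),
$$
and both terms tend to $0$ as $\|\z\|\to 0$: the first factor is uniformly bounded by \eqref{eq:u2-1}/\eqref{eq:u2-1prime} or \eqref{eq:u2-2}, while $f(\z)\to 0$ by Lemma~\ref{lem:too}. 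Therefore
$$
\lim_{\|\z\|\to 0}\frac{PV(\z)}{V(\z)} = \gamma^{-\alpha/q} < 1
$$
since $\gamma>1$.

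\paragraph{Conclusion and main obstacle}
Both limits are strictly less than $1$, so Lemma~\ref{lem:CSdrift} yields the drift condition \eqref{eq:driftgerd} for $V$ outside a set of the form $D_{[l_1,l_2]}$, which is a small set by Proposition~\ref{prop:D-6-7smallSets}. Finiteness of $V$ at some $\z_0$ and $V\ge 1$ are built into its definition, and $PV(\z)<\infty$ on $\ZZ$ is guaranteed by \eqref{eq:u2-3}. I expect the main obstacle to be the $\|\z\|\to\infty$ analysis: specifically, controlling the success contribution requires a uniform (in $\z$) bound on the Taylor remainder $v\nabla f(\zzz+c_\N v\N)\cdot\N$, which is exactly why Assumption~\ref{ass:f2} imposes the polynomial gradient bound \eqref{eq:assOPOmajgradient}; without it, the naive application of dominated convergence would fail.
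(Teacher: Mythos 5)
Your proposal is correct and follows essentially the same route as the paper's proof: reduction via Lemma~\ref{lem:CSdrift}, the success/failure and $V$-branch decomposition, the mean value theorem expansion around $\z/f(\z)^{1/\alpha}$ controlled by the gradient bound \eqref{eq:assOPOmajgradient}, Lemma~\ref{lem:limitps} for the $1/2$ limits at infinity, Lemma~\ref{lem:yop} near zero, and Lemma~\ref{lem-integra-V} for the uniform integrability bounds. The only cosmetic slip is invoking ``Euler's theorem'' for the identity $f(\z+\N)/f(\z)=f(\zzz+v\N)$, which is just positive homogeneity; otherwise the argument matches the paper's.
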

%%%%%%%%%%%%%%%%%
%%%%%%%%%%%%%%%%%
%%%%%%%%%%%%%%%%%
The theorem calls for a few remarks. The LHS of \eqref{eq:inc-linear} corresponds to the expectation of the step-size change to the $\alpha$ on a linear function (i.e., $f(\x) = \vea .  \x + b $ for $\vea \in \R^{\dim}$ and $b \in \R$). Using the notation $\etastar$ introduced in \eqref{eq:sschangeETA} for the step-size change, the condition \eqref{eq:inc-linear} requires that on linear function 
\begin{equation}\label{eq:inc-linearETA}
E[ 1/({\etastar})_{\rm linear}^{\alpha}] < 1 \enspace,
\end{equation}
that translates a step-size increase on linear function. This condition is similar to the one found to prove geometric ergodicity for the $(1,\lambda)$ with self-adaptation \cite{TCSAnne04}. For an algorithm without elitist selection, the condition \eqref{eq:inc-linearETA} is the only one formulated on the step-size change to guarantee geometric ergodicity. It  ensures that the limit of $PV/V$ is smaller $1$ for $\z$ to infinity (see proof). For the $(1+1)$-ES another condition appears due to the limit of $PV/V$ in zero (see the details in the proof) that is reflected in the fact that $\factonefifth^{-\alpha/4} < 1$, i.e., the step-size should decrease in case of failure. This translates for the one-fifth success rule into $\gamma > 1$. Note that we also need this condition $\gamma > 1$ for the irreducibility, the small sets and the aperiodicity.
\begin{proof}
Using the definition of $V$ we can write $PV(\z) = E[V(\Ztt) | \Zt=\z]$ as
$$ 
PV(\z) =E\left[f(\Ztt)1_{\{ f(\Ztt) \geq  1\}} + \frac{1}{f(\Ztt)} 1_{\{ f(\Ztt) <  1 \}}  | \Zt=\z \right] \enspace.
$$
According to Lemma~\ref{lem:CSdrift}, we need to study the limits of $PV(\z)/V(\z)$ for $\z$ to $0$ and to $\infty$.
\paragraph*{Investigating the limit of $PV/V$ for $\z$ to infinity}
%\paragraph*{Case where $f(\z) > \factonefifth^{\alpha/4}$}
We first investigate the limit for $\| \z \| $ to infinity and consider $\z$ large enough, in particular we can assume that 
\begin{equation}\label{eq:case1}
f(\z) \geq \max{\{1,\factonefifth^{\alpha/q}\}}
\end{equation}
and hence $V(\z) = f(\z)$. Then 
\begin{equation}\label{eq:driftzlarge}
\frac{PV(\z)}{V(\z)} = 
\underbrace{E\left[ \frac{f(\Ztt)1_{\{ f(\Ztt) \geq 1 \}}  }{f(\z)}  | \Zt=\z \right]}_{A(\z)}
+
\underbrace{\frac{E\left[\frac{1}{f(\Ztt)} 1_{\{f(\Ztt) \leq 1 \}}  | \Zt=\z \right]}{f(\z)}}_{B(\z)}
\end{equation}

\noindent Throughout this proof we will denote $\aN$ the multivariate normal distribution used at iteration $t$ to sample a new candidate solution. Namely the update for $\Zt$ reads:
$$ 
\boxed{\Ztt = \frac{\Zt+ \aN}{\as} 1_{\{ f(\Zt + \aN) \leq f(\Zt) \}} + \frac{\Zt}{\as^{-1/q}} 1_{\{ f(\Zt + \aN) > f(\Zt) \}}} \enspace .
$$
Let us first investigate the term $A(\z)$ introduced in \eqref{eq:driftzlarge}. It is equal to
\begin{align*}
%A(\z) 
&   E \left[ \frac{f(\frac{\z + \aN}{\as})}{f(\z)} 1_{\{ f(\z + \aN) \leq f(\z) \}}  1_{\{ f(\frac{\z + \aN}{\as}) \geq 1 \}}  \right] + E \left[ \frac{f(\frac{\z}{\as^{-\frac1q}})}{f(\z)} 1_{\{ f(\z + \aN) > f(\z) \}} \underbrace{1_{\{ f(\frac{\z}{\as^{-1/q}}) \geq 1\}}}_{=1 \text{ (see} \eqref{eq:case1})} \right] \\
& = \frac{1}{\factonefifth^{\alpha}} \underbrace{E \left[ f\left(\frac{\z}{f(\z)^{\frac1\alpha}}+  \frac{\aN}{f(\z)^{\frac1\alpha}}\right) 1_{\{ f(\z + \aN) \leq f(\z) \}}  1_{\{ f(\frac{\z +  \aN}{\as}) \geq 1 \}}  \right]}_{A_{1}} + \factonefifth^{\alpha/q} \underbrace{E \left[ 1_{\{ f(\z + \aN) > f(\z) \}}   \right]}_{A_{2}} \enspace.
\end{align*}
Using Lemma~\ref{lem:limitps} we obtain that $A_{2}$ converges to $1/2$ when $\z$ goes to $\infty$. Let us now handle the term $A_{1}$. Using the mean value theorem we have the existence of $c_{\aN} \in [0,1]$ such that 
\begin{equation}\label{eq:withmvt}
f\left( \frac{\z}{f(\z)^{\frac1\alpha}} +  \frac{\aN}{f(\z)^{\frac1\alpha}} \right) = \underbrace{f\left(\frac{\z}{f(\z)^{\frac1\alpha}}\right)}_{=1} + \nabla f \left( \frac{\z}{f(\z)^{\frac1\alpha}} + c_{\aN} \frac{ \aN}{f(\z)^{\frac1\alpha}} \right) . \frac{ \aN}{f(\z)^{\frac1\alpha}} \enspace.
\end{equation}
Hence the term $A_{1}$ can be decomposed in two terms:
\begin{multline}
A_{1} = \underbrace{E \left[ 
1_{\{ f(\z + \aN) \leq f(\z) \}}  1_{\{ f(\frac{\z +  \aN}{\as}) \geq 1 \}}  \right]}_{A_{11}}
+\\
\frac{1}{f(\z)^{\frac1\alpha}} \underbrace{ E \left[\nabla f \left( \frac{\z}{f(\z)^{\frac1\alpha}} + c_{\aN} \frac{ \aN}{f(\z)^{\frac1\alpha}} \right) . \aN 1_{\{ f(\z + \aN) \leq f(\z) \}}  1_{\{ f(\frac{\z +  \aN}{\as}) \geq 1 \}}  \right]}_{A_{12}} \enspace.
\end{multline}
The term $A_{11}$ equals
$$
A_{11} = \int 1_{\{ f(\z + \n) \leq f(\z) \}}(\n)  1_{\{ f(\frac{\z +  \n}{\as}) \geq 1 \}}(\n) \pp(\n) d\n \enspace.
$$
According to Lemma~\ref{lem:limitps}, the term $A_{11}$ converges to $1/2$ when $\z$ goes to $\infty$. Note that  for all $\n$, the indicator $1_{\{ f(\frac{\z +  \n}{\as}) \geq 1 \}}(\n)$ converges to $1$ for $\z$ to $\infty$.
We now take care of the term $A_{12}$ and prove that $| A_{12}|$ is bounded which will imply that $A_{12} \frac{1}{f(\z)^{1/\alpha}}$ converges to zero when $\z$ goes to $\infty$.
\begin{multline*}
|A_{12}| \leq E \left[ | \nabla f \left( \frac{\z}{f(\z)^{\frac1\alpha}} + c_{\aN} \frac{ \aN}{f(\z)^{\frac1\alpha}} \right) . \aN | \right]   \leq E \left[ \|\nabla f \left( \frac{\z}{f(\z)^{\frac1\alpha}} + c_{\aN} \frac{ \aN}{f(\z)^{\frac1\alpha}}  \right) \| \| \aN \| \right] \\
 \leq E \left[ \|\nabla f \left( \frac{\z}{f(\z)^{\frac1\alpha}} + c_{\aN} \frac{ \aN}{f(\z)^{\frac1\alpha}}  \right) \|^{2} \right]^{\frac12} 
\underbrace{E \left[ 
\| \aN \|^{2} \right]^{\frac12}}_{=\sqrt{n}} \nonumber \enspace .
\end{multline*}
For the last two inequalities we have applied Cauchy-Schwarz inequalities. We denote $\zzz=\frac{\z}{f(\z)^{1/\alpha}}$ and $c_{\zzz} = \frac{1}{f(\z)^{1/\alpha}} $ and apply \eqref{eq:assOPOmajgradient}. Hence we find
$$
E \left[ \|\nabla f \left( \zzz + c_{\aN} c_{\zzz} \aN  \right) \|^{2} \right]
\leq
c_{0} + \sum_{i=1}^{k} c_{i} E[ \| \aN \|^{i} ] 
=: M_{\ref{prop:OPO}}
$$
and it follows that  $|A_{12}| \leq \sqrt{M_{\ref{prop:OPO}} n} $. Hence
$$
\lim_{\| \z \| \to \infty} A(\z) = \frac12 \left( \frac{1}{\as^{\alpha}} + \as^{\alpha/q} \right) \enspace.
$$
We investigate now the term $B(\z)$ defined in \eqref{eq:driftzlarge}.
$$
B(\z) = \frac{1}{f(\z)} \underbrace{E \left[ \frac{1_{\{f(\z+\aN)\leq f(\z) \}} 1_{\{ f(\frac{\z+ \aN}{\as})  \leq 1 \}}}{f(\frac{\z+ \aN}{\as})}  \right]}_{B_{11}}
+ \frac{1}{f(\z)} \underbrace{E \left[  \frac{1_{\{f(\z+\aN)> f(\z) \}} 1_{\{ f(\z/\as^{-\frac1q}) \leq 1 \}} }{f(\z/\as^{-\frac1q})} \right]}_{=0 \mbox{ as }   f(\z/\as^{-\frac1q}) > 1  }
$$
% as  1_{\{ f(\z/\as^{-\frac14}) \leq 1 \}} = 0
Let us now take care of the term $B_{11}$  which is upper bounded by
$$
B_{11} \leq E \left[ \frac{1}{f(\frac{\z+ \aN}{\as})}  \right] = \as^{\alpha} E \left[ \frac{1}{f(\z+ \aN)} \right] < \as^{\alpha} c_{\ref{lem-integra-V}} \enspace,
$$
where for the latter term we have used Lemma~\ref{lem-integra-V}. Overall we find that
$$
0 \leq B(\z) < \frac{1}{f(\z)} \as^{\alpha} c_{\ref{lem-integra-V}} \xrightarrow[\|\z\| \to \infty]{} 0
$$
where the latter limit comes from the fact that $\frac{1}{f(\z)}$ converges to zero when $\z$ goes to infinity.
Overall we have proven that
\begin{equation}\label{eq:cond-infinity}
\lim_{\|\z\| \to \infty} \frac{PV(\z)}{V(\z)} =  \frac12 \left( \frac{1}{\as^{\alpha}} + \as^{\alpha/q} \right) \enspace.
\end{equation}

\newcommand{\AAA}{A^{f<1}}
\newcommand{\BBB}{B^{f<1}}
\paragraph{Investigating the limit of $PV/V$ for $\z$ to zero}
%\paragraph{Case where $f(\z) < \factonefifth^{\alpha/4}$}
We now investigate the limit for $\| \z \|$ to zero and consider thus $\z$ small enough, in particular we can assume
$$
f(\z) < \min \{ 1 , \factonefifth^{\alpha/q}  \}
$$
and hence $1/V(\z) = f(\z) $.
The quantity $PV(\z)/V(\z)$ writes
$$
\frac{PV(\z)}{V(\z)} = 
\underbrace{f(\z) E \left[ f(\Ztt) 1_{\{f(\Ztt) \geq 1 \}} | \Zt=\z \right]}_{C(\z)}
+
\underbrace{E \left[ \frac{f(\z)}{f(\Ztt)} 1_{\{ f(\Ztt) <  1 \}} | \Zt = \z \right]}_{D(\z)} \enspace.
$$
Let us investigate the term $C(\z)$:
\begin{multline}
C(\z) = f(\z) E \left[ f\left(\frac{\z+  \aN}{\as}\right) 1_{\{f(\z+  \aN) \leq f(\z) \}} 1_{\{ f((\z+  \aN)/\as) \geq  1 \}} \right] \\ + f(\z) E \left[f(\as^{\frac1q} \z) 1_{\{ f(\z+\aN) > f(\z) \}} 1_{\{ f(\as^{\frac1q} \z) \geq  1 \}} \right]
\end{multline}
and hence
$$
0 \leq C(\z) \leq f(\z) E \left[ f\left(\frac{\z+  \aN}{\as}\right) \right] + f(\z)^{2} \as^{\alpha/q} \enspace.
$$
According to \eqref{eq:u2-1} and \eqref{eq:u2-1prime}, for $\| \z \|$ small enough (hence staying in a bounded region), $E \left[ f\left(\frac{\z+  \aN}{\as}\right) \right]$ is a bounded  function of $\z$ 
and thus $C(\z)$ converges to zero when $\z$ goes to $0$:
\begin{equation}
\lim_{\z \to 0} C(\z) = 0 \enspace.
\end{equation}
Let us investigate the term $D(\z)$:
\begin{align*}
D(\z) & = f(\z) E \left[ \frac{ 1_{\{f(\z+ \aN) \leq f(\z) \}} 1_{\{ f((\z+ \aN)/\as) < 1  \}}}{f((\z+ \aN)/\as)} \right] + 
f(\z) E \left[ \frac{1_{\{f(\z+ \aN) > f(\z) \}}}{f(\z/\as^{-1/q})}  \underbrace{1_{\{ f(\frac{\z}{\as^{-\frac1q}}) <  1 \}}}_{=1} \right] \\
 & = f(\z) E \left[ \frac{1_{\{f(\z+ \aN) \leq f(\z) \}} 1_{\{ f((\z+ \aN)/\as) < 1   \}}}{f((\z+ \aN)/\as)}  \right] + 
\as^{-\alpha/q} E \left[ 1_{\{f(\z+ \aN) > f(\z) \}} \right] \\
& = \underbrace{f(\z) \as^{\alpha} E \left[ \frac{1_{\{f(\z+ \aN) \leq f(\z) \}} 1_{\{ f((\z+ \aN)/\as) <  1  \}}}{f(\z+ \aN)}  \right]}_{D_{1}(\z)}
+  \as^{-\alpha/q} \Pr( f(\z+ \aN) > f(\z)  ) \enspace
\end{align*}
%%%
The term $D_{1}(\z)$ is upper bounded by  $ f(\z)  \as^{\alpha} E \left[ \frac{1}{f(\z+ \aN)}  \right]$ and using Lemma~\ref{lem-integra-V} we find that for all $\z$, $ E \left[ \frac{1}{f(\z+ \aN)}  \right]$ is bounded by a constant. Hence since $f(\z)$ converges to $0$ when $\z$ goes to $0$ (Lemma~\ref{lem:too}), so does $D_{1}(\z)$. Overall since according to Lemma~\ref{lem:yop}, $\Pr( f(\z+ \aN) > f(\z)  )$ converges to $1$ when $\z$ goes to $0$, we find that
%%%
\begin{equation}
\lim_{\z \to 0} D(\z) = \as^{-\alpha/q} \enspace.
\end{equation}
%%%
Overall, we have proven that 
\begin{equation}\label{eq:cond-zero}
\lim_{\z \to 0} PV(\z)/V(\z) = \as^{-\alpha/q}  \enspace.
\end{equation}
According to Lemma~\ref{lem:CSdrift}, we obtain a drift condition for geometric ergodicity if the limits in \eqref{eq:cond-infinity} and \eqref{eq:cond-zero} are strictly smaller $1$, i.e., if  
$$
\frac12 \left( \frac{1}{\as^{\alpha}} + \as^{\alpha/q} \right) < 1
$$
and $\as^{-\alpha/q} < 1$. This latter condition being equivalent to $\gamma > 1$.
\end{proof}

\subsection{Harris Recurrence and Positivity}

Harris recurrence and positivity of the chain $\Z$ follow from the geometric drift proven in Theorem~\ref{prop:OPO}. Indeed, remind the following drift result for Harris recurrence:

\begin{theorem}[Theorem 9.1.8. in \cite{Tweedie:book1993}](Drift condition for Harris recurrence)
Suppose $\Z$ is a $\psi$-irreducible chain. If there exists a petite set $C$ and a function $V$ which is unbounded off petite sets such that 
\begin{equation}\label{eq:DriftHR}
\Delta V(\z) \leq 0, \z \in C^{c}
\end{equation}
holds, then $\Z$ is Harris recurrent.
\end{theorem}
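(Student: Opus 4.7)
The plan is to reduce the conclusion to showing that the first-passage time $\tau_C := \inf\{t \geq 1 : \Zt \in C\}$ is $P_\z$-almost surely finite for every $\z \in \ZZ$. Once this is established, $\psi$-irreducibility combined with standard arguments from \cite{Tweedie:book1993} (cf.\ Proposition~9.1.1 there) upgrades the property to Harris recurrence on any $A \in \B^+(\ZZ)$, since a petite $C$ may be chosen to contain a set in $\B^+(\ZZ)$ without loss of generality.

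The first substantive step is to turn the drift condition into a supermartingale statement. Fix $\z \in C^c$ and set $M_t := V(\Z_{t \wedge \tau_C})$. Splitting on $\{t < \tau_C\}$ and $\{t \geq \tau_C\}$, and noting that $\Z_t \in C^c$ on the first event so that $\Delta V(\Z_t) \leq 0$ applies, one checks that $E_\z[M_{t+1} \mid \mathcal{F}_t] \leq M_t$ with $(\mathcal{F}_t)$ the natural filtration of the chain. Since $V \geq 1 \geq 0$, $(M_t)$ is a non-negative supermartingale under $P_\z$ and, by the martingale convergence theorem, converges $P_\z$-almost surely to a finite limit $M_\infty$. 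On the event $\{\tau_C = \infty\}$, the process $M_t$ coincides with $V(\Zt)$, so $V(\Zt)$ is eventually bounded by some (random) level $N$.

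The second substantive step uses the hypothesis that $V$ is unbounded off petite sets: the sublevel set $\{V \leq N\}$ is contained in a petite set $P_N$, hence on $\{\tau_C = \infty\}$ the trajectory is eventually trapped inside a petite set. The main obstacle, which I expect to be the hardest part, is to rule out this confinement under $\psi$-irreducibility. The argument goes through the minorization defining the petite set $P_N$: for some sampling distribution $a$ on $\NNN$ there is a non-trivial measure $\nu_a$ with $K_a(\y,\cdot) \geq \nu_a(\cdot)$ for all $\y \in P_N$. A conditional Borel--Cantelli style argument then shows that infinitely many returns to $P_N$ force the chain to hit any set of positive $\nu_a$-measure, and by maximality of $\psi$ any set in $\B^+(\ZZ)$, with probability one; applied to $C$ (or to a small subset of $C$ in $\B^+(\ZZ)$) this contradicts $\tau_C = \infty$. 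The contradiction yields $P_\z(\tau_C < \infty) = 1$ for every $\z \in \ZZ$, which closes the reduction and establishes Harris recurrence.
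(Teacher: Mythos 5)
First, a point of reference: the paper does not prove this statement at all --- it is quoted verbatim as Theorem~9.1.8 of Meyn and Tweedie and used as an imported tool. So the only meaningful comparison is with the textbook proof, and your sketch is essentially a reconstruction of it: the supermartingale $M_t=V(\Z_{t\wedge\tau_C})$ obtained from the drift off $C$, almost sure convergence of $M_t$, confinement of the trajectory to a petite sublevel set of $V$ on $\{\tau_C=\infty\}$ (this is exactly where ``unbounded off petite sets'' enters), and a contradiction with $\psi$-irreducibility via the petite-set minorization and a conditional Borel--Cantelli (L\'evy $0$--$1$) argument, followed by the standard ``$L(\z,C)\equiv 1$ for a petite $C$ implies Harris recurrence'' step. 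In outline this is sound and is the same route as the source.

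Two places in your sketch are stated faster than they can be justified. First, the phrase ``and by maximality of $\psi$ any set in $\B^{+}(\ZZ)$'' hides real work: the minorization $K_a(\y,\cdot)\geq\nu_a(\cdot)$ on the petite sublevel set only forces hits of sets charged by $\nu_a$, and there is no reason why $\nu_a$ should charge $C$ (nor, without your enlargement, why $\psi$ should: a petite set can be $\psi$-null). The standard repair is the one you only half-invoke: enlarge $C$ to a petite set of positive $\psi$-measure (legitimate because the drift still holds off any superset of $C$ and finite unions of petite sets are petite for $\psi$-irreducible chains), and then pass through the accessibility sets $\{\y:\sum_{j\leq m}P^{j}(\y,C)>\delta\}$, which have positive $\nu_a$-measure for suitable $m,\delta$; the chain hits these infinitely often by the same L\'evy argument, and a second Borel--Cantelli layer (now with a bounded horizon $m$) yields an almost sure hit of $C$. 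This two-layer argument is precisely the content of the Meyn--Tweedie propositions you cite, so the gap is one of exposition rather than a step that fails, but as written the single appeal to ``maximality of $\psi$'' does not suffice. Second, two routine tidy-ups: replace the ``random level $N$'' by the countable exhaustion over the petite sets $\{V\leq n\}$, $n\in\NNN$, and note that $P_{\z}(\tau_C<\infty)=1$ extends from $\z\in C^{c}$ to all $\z$ by conditioning on the first step; also, the nonnegativity of $V$ needed for the supermartingale convergence is part of the drift-function convention (the theorem as stated does not assert $V\geq 1$).
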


In the previous theorem, a function $V: \ZZ \mapsto \R_{+}$ is unbounded off petite sets for $\Z$ if for any $c < \infty$, the sublevel sets $\LL_{c} = \{ \y : V(\y) \leq c \} $ is petite (see \cite[Section 8.4.2]{Tweedie:book1993}).
From \cite[Theorem~10.4.4]{Tweedie:book1993} a recurrent chain admits an unique (up to constant multiples) invariant measure. The positivity is deduced from another drift condition as expressed in the following theorem.
\begin{theorem}[From Theorem 13.0.1 in \cite{Tweedie:book1993}]
Suppose that $\Z$ is an aperiodic Harris recurrent chain with invariant measure $\pi$. The following are equivalent:\\
The chain is positive Harris: that is, the unique invariant measure $\pi$ is finite. \\
There exists some petite set $C$, some $b < \infty$ and a non-negative function $V$ finite at some $\z_{0} \in \ZZ$, satisfying
\begin{equation}\label{eq:driftPOS}
\Delta V(\z) \leq -1 + b 1_{C}(\z), \z \in \ZZ \enspace.
\end{equation}
\end{theorem}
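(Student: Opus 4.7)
The plan is to prove the equivalence ``positive Harris $\iff$ existence of a drift function satisfying \eqref{eq:driftPOS}'' in two directions, using Kac's occupation formula and the Foster--Lyapunov comparison (optional stopping) theorem as the two main tools. Throughout I fix a petite set $C$ with $\psi(C)>0$; such sets exist by $\psi$-irreducibility, and by Harris recurrence $P_\z(\tau_C<\infty)=1$ for every $\z\in\ZZ$.

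For the harder direction ($\Leftarrow$), I would first show that the drift \eqref{eq:driftPOS} forces $E_\z[\tau_C]\leq V(\z)+b$ for every $\z\in C^c$ and $E_\z[\tau_C]\leq 1+\int_{C^c} P(\z,d\y)V(\y)+b$ for $\z\in C$. The key trick is to introduce the process $M_n := V(\Z_{n\wedge\tau_C}) + (n\wedge\tau_C) - b\sum_{k=0}^{(n\wedge\tau_C)-1} 1_C(\Z_k)$ and observe, via \eqref{eq:driftPOS}, that $(M_n)$ is a non-negative supermartingale with respect to $(\mathcal F_n)$. Since $\Z_k\in C^c$ for $k<\tau_C$ when the chain starts outside $C$, the indicator term vanishes and monotone convergence gives $E_\z[\tau_C]\leq V(\z)<\infty$ on $\{V<\infty\}$. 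The drift moreover propagates this bound: because the level sets of $V$ are non-empty and $\Z$ is $\psi$-irreducible, $V$ is finite on a full absorbing set, and I extend the return-time bound to all of $\ZZ$ by a one-step argument from $C$ (a single use of $P$ lands in $\{V<\infty\}$).

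Next, I use Kac's occupation formula: for any irreducible recurrent chain the (up to scalar) invariant measure $\pi$ admits the representation $\pi(A)=\int_C \pi(d\z)\,E_\z\!\bigl[\sum_{k=1}^{\tau_C}1_A(\Z_k)\bigr]$. Taking $A=\ZZ$ and using the bound on $E_\z[\tau_C]$ from the previous step together with the fact that $\pi$ restricted to $C$ is finite (as $C$ is petite and hence $\pi(C)<\infty$ for any $\sigma$-finite invariant $\pi$---this uses that a petite set is ``small enough'' that the minorising measure controls $\pi(C)$), I obtain $\pi(\ZZ)<\infty$, so $\pi$ can be normalised into a probability measure and $\Z$ is positive Harris.

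For the converse direction ($\Rightarrow$), I take $V(\z):=1+E_\z[\tau_C]$. By Harris recurrence and positivity, $V$ is finite $\pi$-almost everywhere, in particular at some $\z_0$. By the strong Markov property at time $1$, for $\z\in C^c$ one has $\tau_C=1+\tau_C\circ\theta$ under $P_\z$, so $E_\z[\tau_C]=1+E_\z[E_{\Z_1}[\tau_C]]$, whence $PV(\z)=V(\z)-1$, i.e.\ $\Delta V(\z)=-1$. For $\z\in C$, the same identity gives $PV(\z)=1+E_\z[E_{\Z_1}[\tau_C]]$, which I bound by a constant $b<\infty$ using the standard fact that on a petite set $\sup_{\z\in C} E_\z[\tau_C]$ is finite for a positive Harris chain (a Meyn--Tweedie result obtained by a renewal / splitting argument on the minorising measure of $C$). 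Combining the two cases yields $\Delta V(\z)\leq -1+b\,1_C(\z)$ as required.

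The main obstacle will be the technical step, in both directions, of showing the finiteness of the mean return time $E_\z[\tau_C]$ uniformly in $\z\in C$. Going from ``drift'' to ``finite $\pi$'' via Kac requires a careful bookkeeping of the supermartingale argument (treating initial conditions in $C$ vs.\ $C^c$ separately), and the reverse direction relies non-trivially on $C$ being petite---not merely small---so that the splitting/regeneration construction produces a uniformly bounded mean hitting time. Everything else is an application of the strong Markov property and the invariance identity $\pi P = \pi$.
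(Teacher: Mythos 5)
You should first note that the paper does not prove this statement at all: it is quoted, as a tool, from Theorem~13.0.1 of Meyn and Tweedie \cite{Tweedie:book1993}, so there is no internal proof to compare with, and your sketch is in effect an attempt to reconstruct the textbook argument (Dynkin/comparison supermartingale plus Kac's formula in one direction, expected hitting times in the other). As a reconstruction it has two genuine gaps. First, in the direction ``drift $\Rightarrow$ positive Harris'', after deriving $E_{\z}[\tau_C]\le V(\z)+b$ you plug this into Kac's representation $\pi(\ZZ)=\int_C E_{\z}[\tau_C]\,\pi(d\z)$ and conclude finiteness ``together with $\pi(C)<\infty$''. That inference fails: $V$ is not assumed bounded on $C$, nor is it known to be $\pi$-integrable (one cannot cancel $\int PV\,d\pi=\int V\,d\pi$ when both sides may be infinite), so pointwise finiteness of $E_{\z}[\tau_C]$ on $C$ plus $\pi(C)<\infty$ does not give $\int_C E_{\z}[\tau_C]\,\pi(d\z)<\infty$. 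This is exactly where the real work sits in \cite{Tweedie:book1993}: one shows the sublevel sets $\{V\le n\}$ are petite and regular (bounded mean return times) and applies Kac there, or alternatively iterates the drift to get $\frac1n\sum_{k=0}^{n-1}P^k(\z,C)\ge \frac1b-\frac{V(\z)}{nb}$ and excludes null recurrence via the Ces\`aro limit theorem for sets of finite $\pi$-measure.

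Second, in the converse direction you invoke as a ``standard fact'' that a positive Harris chain satisfies $\sup_{\z\in C}E_{\z}[\tau_C]<\infty$ on a petite set $C$. Kac's formula only gives $E_{\z}[\tau_C]<\infty$ for $\pi$-almost every $\z\in C$, and the uniform bound is false for arbitrary petite sets: one can enlarge a good petite set by a $\pi$-null point from which the entry time into $C$ has infinite mean, and the union remains petite. What is true, and what carries the weight of the textbook proof, is the \emph{existence} of some petite set with uniformly bounded mean return time (a regular petite set); establishing that existence is the substantive content of the regularity/splitting theory of Chapter~11 of \cite{Tweedie:book1993}, not a one-line remark. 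There is also a smaller slip in the same direction: for $\z\in C^{c}$ the identity $\tau_C=1+\tau_C\circ\theta$ is wrong on the event $\{\Z_1\in C\}$; you need the hitting time $\sigma_C$ (equal to zero on $C$), i.e.\ take $V=1+E_{\cdot}[\sigma_C]$, since with your $V$ one only gets $\Delta V(\z)=-1+\int_C P(\z,d\y)E_{\y}[\tau_C]$ off $C$, which again requires the uniform bound on $C$ to control. For the purposes of this paper none of this is needed, since the result is imported rather than proven, but as a standalone proof your sketch is incomplete at precisely these two points.
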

Using those two theorems we deduce the corollary that under the conditions of Theorem~\ref{prop:OPO} the chain $\Z$ is positive Harris recurrent:
\begin{corollary}\label{coro:HR-P}
Consider a (1+1)-ES with generalized one-fifth success rule as defined in \eqref{eq:sampling}, \eqref{eq:update-mean} and \eqref{eq:update-ss} optimizing $h = g \circ f$ where $g \in \Monotone$, $f: \R^{n} \to [0,+\infty[$ satisfies Assumptions~\ref{ass:f2}.
Let $\Z=(\Zt=\Xt/\st)_{t \in \NNN}$ be the Markov chain associated to the (1+1)-ES optimizing $h$ defined in \eqref{eq:transitionZ}.
If $\gamma > 1$ and $\frac12 \left( \frac{1}{\factonefifth^{\alpha}} + \factonefifth^{\alpha/q} \right) < 1$, then $\Z$ is positive Harris recurrent.
\end{corollary}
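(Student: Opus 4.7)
The plan is to derive positive Harris recurrence directly from the two cited drift theorems by leveraging the geometric drift established in Theorem~\ref{prop:OPO} together with the $\psi$-irreducibility (with $\psi = \muLeb$) and the aperiodicity of $\Z$ proven in Section~\ref{sec:ISSA}. Small sets are petite, so the small set $C = \DD$ that supports the drift of Theorem~\ref{prop:OPO} is petite, and the hypotheses of both cited drift theorems become accessible once we check one extra regularity property of $V$.

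First, I would verify that $V$ defined in \eqref{eq:driftOPO} is \emph{unbounded off petite sets}. For any $c \geq 1$, the inequality $V(\z) \leq c$ forces $1/c \leq f(\z) \leq c$, so the sublevel set $\{\z : V(\z) \leq c\}$ is contained in a set of type $\DD$ as in Lemma~\ref{lem:smallset} (with $l_{1} = 1/c$, $l_{2} = c$), which is small and therefore petite. This supplies the missing hypothesis of the Harris-recurrence drift theorem.

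Next, I would extract from the geometric drift $PV(\z) \leq \vartheta V(\z) + b\, 1_{C}(\z)$, with $\vartheta < 1$, the two inequalities \eqref{eq:DriftHR} and \eqref{eq:driftPOS}. Rewriting as $\Delta V(\z) \leq (\vartheta - 1)\, V(\z) + b\, 1_{C}(\z)$ and using $V \geq 1$, for $\z \in C^{c}$ we get $\Delta V(\z) \leq (\vartheta - 1)\, V(\z) \leq 0$, which is exactly \eqref{eq:DriftHR}; combined with $\psi$-irreducibility this gives Harris recurrence. The same drift together with $V \geq 1$ yields $\Delta V(\z) \leq -(1-\vartheta) + b\, 1_{C}(\z)$, so that rescaling by $1/(1-\vartheta) > 0$ produces $\tilde V := V/(1-\vartheta)$, a non-negative function that is finite at every $\z \in \ZZ$ and satisfies $\Delta \tilde V(\z) \leq -1 + \tilde b\, 1_{C}(\z)$ with $\tilde b = b/(1-\vartheta)$. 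This is precisely \eqref{eq:driftPOS}, and the positivity theorem delivers a finite invariant measure.

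There is no real obstacle in this corollary: it is a bookkeeping step exploiting the fact that a \emph{geometric} drift is strictly stronger than either of the two weaker drifts required. The only minor subtlety is recognising the sublevel sets of $V$ as sets $\DD$ already handled in Lemma~\ref{lem:smallset}, after which everything follows by assembling Theorem~\ref{prop:OPO}, the cited theorems, and the irreducibility and aperiodicity results from Section~\ref{sec:ISSA}.
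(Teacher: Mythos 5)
Your proof is correct and follows essentially the same route as the paper: it combines the irreducibility and aperiodicity from Section~\ref{sec:ISSA} with the geometric drift of Theorem~\ref{prop:OPO}, notes that the sublevel sets of $V$ are sets of type $D_{[l_1,l_2]}$ (hence small, hence petite) so $V$ is unbounded off petite sets, and extracts \eqref{eq:DriftHR} and \eqref{eq:driftPOS} from the geometric drift using $V\geq 1$. The paper's proof simply asserts these deductions as obvious, whereas you spell them out (including the harmless rescaling of $V$ for \eqref{eq:driftPOS}); the content is the same.
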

\begin{proof}
The assumptions on $f$ ensure that the chain is $\varphi$-irreducible and aperiodic (see Section~\ref{sec:ISSA}) and the geometric drift function exhibited in Theorem~\ref{prop:OPO} satisfies obviously \eqref{eq:DriftHR} and \eqref{eq:driftPOS}.
It is unbounded off petite sets as the sublevel sets are small sets for $\Z$.
\end{proof}

\section{Linear Convergence of the $(1+1)$-ES with the Generalized One-fifth Success Rule}\label{sec:linear-convergence}

Using the properties derived on the normalized chain $(\Zt)_{t \in \NNN}$ we can now prove the global linear convergence of the $(1+1)$-ES with generalized one-fifth success rule. Linear convergence is formulated almost surely and in expectation. In a last part we characterize how fast the stationary regime where linear convergence takes place is reached. Common to the linear convergence results is the integrability of $\z \mapsto \ln \| \z \|$ with respect to the invariant probability measure $\pi$ of the chain $\Z$ that we investigate in the next section.
%We are hence ready to formulate the different linear convergence results for the $(1+1)$-ES with generalized one-fifth success rule.
%\todo{it's more than linear convergence as we have also the fact that we reach fast the stationary regime - do we want to reflect that in the title - and how}

\subsection{Integrability w.r.t. the Stationary Measure}
To verify the integrability of $\z \mapsto \ln \| \z \|$ with respect to the invariant probability measure $\pi$, we use \eqref{eq:C-Vnorm} which is a consequence of the existence of a geometric drift. More formally we derive the following general technical  lemma.
%%%%%%%%%
%%% LEMMA
%%%%%%%%%
\begin{lemma}\label{lem:intV}
Let $V$ be a geometric drift function for $\Z$ in the sense of \eqref{eq:driftgerd} and $\pi$ its invariant probability measure. Assume that there exists $\z \in S_{V}$ such that $\int  V(\y) P(\z,d\y) < \infty$, then $V$ is integrable against $\pi$:
$
\| \pi \|_{V} = \int V(\z) \pi(d \z) < + \infty \enspace.
$
\end{lemma}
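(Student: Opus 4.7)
The plan is to exhibit a uniform bound on $\pi(V \wedge N)$ for every $N \in \mathbb{N}_{>}$ and then pass to the limit by monotone convergence. The bound will come from combining \eqref{eq:C-Vnorm} with the hypothesis $PV(\z) < \infty$, while avoiding the circular step of applying the $V$-norm directly to $V$ itself (which would presuppose $\pi(V) < \infty$).

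First, since $V$ is a geometric drift and $\z \in S_{V}$, the theorem on geometric ergodicity yields constants $r > 1$ and $R < \infty$ with
$$
\sum_{t \geq 1} r^{t} \| P^{t}(\z,\cdot) - \pi \|_{V} \;\leq\; R V(\z) \;<\; \infty \enspace,
$$
so in particular the single term $\| P(\z,\cdot) - \pi \|_{V} \leq R V(\z)/r$ is finite. Second, for each $N \in \mathbb{N}_{>}$ introduce the bounded truncation $V_{N} := V \wedge N$. Because $V \geq 1$, we have $1 \leq V_{N} \leq V$, hence $|V_{N}| \leq V$. By the very definition of the $V$-norm,
$$
\bigl| P V_{N}(\z) - \pi(V_{N}) \bigr| \;\leq\; \| P(\z,\cdot) - \pi \|_{V} \;\leq\; R V(\z)/r \enspace.
$$
Combining this with $P V_{N}(\z) \leq P V(\z) < \infty$ (the hypothesis) we obtain
$$
\pi(V_{N}) \;\leq\; P V(\z) + R V(\z)/r \enspace.
$$
The right-hand side is finite and independent of $N$. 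Since $V_{N} \uparrow V$ as $N \to \infty$ and $V \geq 1 \geq 0$, the monotone convergence theorem yields
$$
\| \pi \|_{V} = \int V(\y) \pi(d\y) = \lim_{N \to \infty} \pi(V_{N}) \;\leq\; P V(\z) + R V(\z)/r \;<\; \infty \enspace.
$$

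The one delicate step is the truncation: applying the $V$-norm inequality directly with $k = V$ would be circular because $\pi(V)$ is exactly what we wish to establish is finite, and indeed $\|P(\z,\cdot) - \pi\|_{V}$ could in principle not control $|PV(\z) - \pi(V)|$ if $\pi(V) = +\infty$. Using $V_{N}$ resolves this: the quantities $PV_{N}(\z)$ and $\pi(V_{N})$ are automatically finite (the latter because $V_{N} \leq N$ and $\pi$ is a probability measure), so the $V$-norm bound is applied to a legitimate pair of finite integrals, and only at the very end is the limit $N \to \infty$ taken. Note that the hypothesis $PV(\z) < \infty$ is in fact already an immediate consequence of the geometric drift $PV \leq \vartheta V + b \mathbf{1}_{C}$ together with $V(\z) < \infty$, so the lemma could be stated assuming only $\z \in S_{V}$; we keep $PV(\z) < \infty$ as the hypothesis to avoid any ambiguity in case the drift inequality is used in a form where $V$ could take infinite values on a set of positive measure.
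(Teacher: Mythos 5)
Your proof is correct and follows essentially the same route as the paper: the paper also bounds $\pi$ applied to an increasing sequence of functions dominated by $V$ (simple functions there, truncations $V\wedge N$ here) by $\| P(\z,\cdot)-\pi\|_{V} + \int V(\y)P(\z,d\y)$ via \eqref{eq:C-Vnorm}, then concludes by monotone convergence. The truncation device you use to avoid circularity is the same idea as the paper's approximation by simple functions, so there is no substantive difference.
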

%%%%%%%%%%%%
%%%%%%%%%%%%
\begin{proof}
From the inequality \eqref{eq:C-Vnorm}, there exists $R< \infty$ and $\rho <1$ such that for any $\z \in S_{V}$
\begin{equation}\label{totoo}
\| P(\z, .) - \pi \|_{V} \leq \rho R V(\z) \enspace.
\end{equation}
Consider a sequence of simple positive functions $V_{k}$ such that for each $\z$, $V_{k}(\z)$ converges to $V(\z)$ and $V_{k}(\z)$ is increasing. Then we know that $\int V(\z) \pi(d \z) = \lim_{k} \int V_{k}(\z) \pi (d \z)$ where the latter limit always exist but may be infinite.
From the triangular inequality we deduce that for all $k$
\begin{align}\label{eq:toppo}
 \int  V_{k}(\y) \pi(d\y) & =  \int  V_{k}(\y) \pi(d\y)  - \int  V_{k}(\y) P(\z,d\y) +  \int  V_{k}(\y) P(\z,d\y) \\
& \leq | \int  V_{k}(\y) \pi(d\y)  - \int  V_{k}(\y) P(\z,d\y)  | +  \int  V_{k}(\y) P(\z,d\y) \\
 & \leq  \| P(\z,.) - \pi  \|_{V}  + \int  V(\y) P(\z,d\y) 
 %\leq \rho R V(\z) +  \underbrace{\int  V(\y) P(\z,d\y)}_{finite according to lemma todo sur integrability} \enspace,
\end{align}
where for the last inequality we have used the fact that $0 \leq V_{k} \leq V$ and the definition of $\| P(\z,.) - \pi  \|_{V}$ namely
$
\| P(\z,.) - \pi  \|_{V}  = \sup_{k , |k| \leq V} | \int k(\y) (P(\z,d\y) - \pi(d\y))  |.
$
The fact that $ \int  V_{k}(\y) P(\z,d\y) \leq \int  V(\y) P(\z,d\y) $ is a consequence of $ V_{k} \leq V$.
In addition, $\int  V(\y) P(\z,d\y) < \infty$ according to the assumptions. Using \eqref{totoo} we find that for all $k$
$$
\int  V_{k}(\y) \pi(d\y) \leq  R V(\z)  + \int  V(\y) P(\z,d\y) < \infty
$$
And hence $\int V \pi = \lim_{k} \int  V_{k}(\y) \pi(d\y) < \infty.$
\end{proof}

We can now apply the previous lemma to the drift function $V(\z) = f(\z) 1_{\{f(\z) \geq 1\}} + \frac{1}{f(\z)}1_{\{f(\z) < 1\}} $ assuming that  $f$ satisfies Assumptions~\ref{ass:f2} and that the sufficient conditions for a geometric drift of Theorem~\ref{prop:OPO} are satisfied (the fact that $PV(\z)$ is finite for one $\z$ in $S_{V}$ comes from Lemma~\ref{lem-integra-V}). We now prove that $| \ln \| \z \| |$ is upper bounded by a constant times $V(\z)$ that implies together with the previous lemma the integrability of $\ln \| \z \|$ w.r.t. stationary measure $\pi$.
%%%
\begin{lemma}\label{lem:integrability}
Assume that $f$ satisfies Assumptions~\ref{ass:f} and is continuous on $\Rnstar$. Then there exists a constant $\KK$ such that
\begin{equation}\label{homer}
| \ln \| \z \| | \leq \KK V(\z) \enspace.
\end{equation}
Assume that $f$ satisfies Assumptions~\ref{ass:f2} and that  that $\factonefifth > 1$ and $\frac12 (1/\factonefifth^{\alpha} + \factonefifth^{\alpha/q}) < 1 $. Then $\ln \| \z \|$ is integrable w.r.t. the stationary measure $\pi$ of $\Z$.
\end{lemma}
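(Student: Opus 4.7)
The plan is to establish the pointwise bound \eqref{homer} first and then obtain the integrability statement as a direct application of Lemma~\ref{lem:intV}.

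For \eqref{homer}, the key observation is that, by Lemma~\ref{lem:too}, the quantities $\|\z\|$ and $f(\z)^{1/\alpha}$ are comparable in the sense that $m^{1/\alpha}\|\z\|\le f(\z)^{1/\alpha}\le M^{1/\alpha}\|\z\|$. Taking logarithms gives
\begin{equation*}
\tfrac{1}{\alpha}\ln f(\z)-\tfrac{1}{\alpha}\ln M \;\le\; \ln \|\z\| \;\le\; \tfrac{1}{\alpha}\ln f(\z)-\tfrac{1}{\alpha}\ln m,
\end{equation*}
so $|\ln\|\z\||$ is bounded by a constant plus $\tfrac{1}{\alpha}|\ln f(\z)|$. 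I would then split the analysis into the two regions defining $V$. When $f(\z)\ge 1$, then $\ln f(\z)\ge 0$ and the elementary inequality $\ln x\le x$ for $x\ge 1$ gives $\ln f(\z)\le f(\z)=V(\z)$. When $f(\z)<1$, then $-\ln f(\z)=\ln(1/f(\z))>0$, and applying $\ln x\le x$ to $x=1/f(\z)\ge 1$ yields $-\ln f(\z)\le 1/f(\z)=V(\z)$. Combining both regions with the constant $\tfrac{1}{\alpha}\max(|\ln m|,|\ln M|)$ and using that $V\ge 1$ to absorb this constant into a single multiplicative factor $\KK$, I get $|\ln\|\z\||\le \KK V(\z)$.

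For the integrability against $\pi$, I would simply verify the hypotheses of Lemma~\ref{lem:intV} applied to the drift function $V$ from Theorem~\ref{prop:OPO}. Under Assumptions~\ref{ass:f2} together with $\gamma>1$ and $\frac{1}{2}(1/\gamma^{\alpha}+\gamma^{\alpha/q})<1$, Theorem~\ref{prop:OPO} provides that $V$ satisfies a geometric drift condition, and Corollary~\ref{coro:HR-P} yields a unique invariant probability measure $\pi$. Lemma~\ref{lem-integra-V}, equation \eqref{eq:u2-3}, ensures that $\int V(\y)\,P(\z,d\y)<\infty$ for every $\z\in\Rnstar=S_V$, so Lemma~\ref{lem:intV} applies and gives $\int V\,d\pi<\infty$. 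Integrating the bound \eqref{homer} against $\pi$ then yields
\begin{equation*}
\int |\ln\|\z\||\,\pi(d\z)\;\le\;\KK\int V(\z)\,\pi(d\z)\;<\;\infty,
\end{equation*}
which is the desired integrability.

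I do not anticipate a real obstacle: the only mildly delicate step is the case split $f(\z)\lessgtr 1$ when bounding $|\ln\|\z\||$ by $V$, and making sure that the additive constants coming from $\ln m$ and $\ln M$ are swallowed by $V\ge 1$. Everything else is a direct invocation of results already established earlier in the paper.
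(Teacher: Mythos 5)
Your proof is correct, and its second half (integrability of $\ln\|\z\|$ against $\pi$) is exactly the paper's argument: geometric drift from Theorem~\ref{prop:OPO}, finiteness of $\int V(\y)P(\z,d\y)$ from Lemma~\ref{lem-integra-V}, Lemma~\ref{lem:intV} to get $\int V\,d\pi<\infty$, then the pointwise bound \eqref{homer}. Where you differ is in how \eqref{homer} is established. The paper works asymptotically: near $\z=0$ it uses $V(\z)=1/f(\z)$ together with $\|\z\|^{\alpha}|\ln\|\z\||\to 0$, for large $\|\z\|$ it uses $|\ln\|\z\||\le\|\z\|^{\alpha}\lesssim f(\z)=V(\z)$, and it covers the intermediate annulus $a\le\|\z\|\le b$ by continuity and compactness, so the constant $\KK$ is obtained non-constructively from three separate regimes. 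You instead take logarithms of the sandwich $m^{1/\alpha}\|\z\|\le f(\z)^{1/\alpha}\le M^{1/\alpha}\|\z\|$ from Lemma~\ref{lem:too} to get $|\ln\|\z\||\le\tfrac1\alpha|\ln f(\z)|+\tfrac1\alpha\max(|\ln m|,|\ln M|)$, bound $|\ln f(\z)|\le V(\z)$ globally via $\ln x\le x$ on the two branches of $V$, and absorb the additive constant using $V\ge1$. This is a single global inequality with an explicit constant $\KK=\tfrac1\alpha\bigl(1+\max(|\ln m|,|\ln M|)\bigr)$, avoiding the case-by-limit analysis and the compactness step; it buys a cleaner and slightly more quantitative derivation of the same bound, at no loss of generality since it uses only the hypotheses of the first part of the lemma.
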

%%%
\begin{proof}
For $\| \z \|$ close to $0$, $f(\z)$ is close to $0$ and thus $V(\z) = \frac{1}{f(\z)}$. Hence there exists $c$ such that
$$
\frac{| \ln \| \z \| |}{V(\z)} = \frac{| \ln \| \z \| |}{f(\z)} \leq c \frac{| \ln \| \z \| |}{ \| \z \|^{\alpha}}
$$
where the latter term is bounded since $ \| \z \|^{\alpha} | \ln \| \z \| |$ goes to $0$ when $\z$ goes to $0$. Note that for the middle inequality we have used \eqref{eq:LB-UB}.
For $\z$ large, $V(\z) = f(\z) $ and using $| \ln \| \z \| | \leq \| \z \|^{\alpha} \leq f(\z)$ (again we use \eqref{eq:LB-UB}) we find that for $\z$ large $ | \ln \| \z \| | \leq V(\z)$.
Since $|\ln \| \z \||$ is continuous and hence bounded on all $\| \z\|$ in an interval $[a,b]$ with $0<a<b<+\infty$ then there exists $ \tilde{c}$ such that
$|\ln \| \z \|| 1_{\{ a \leq  \| \z \| \leq b \}} \leq \tilde{c} \leq \tilde{c} V(\z) 1_{\{ a \leq  \| \z \| \leq b \}} $. Overall we have proven that \eqref{homer} holds.

Since according to Lemma~\ref{lem-integra-V} $\int V(\y) P(\z,d\y) < + \infty$ for $\z \in \Rnstar$, when the conditions of Theorem~\ref{prop:OPO} are satisfied, we deduce from the previous lemma that $V$ is integrable w.r.t. $\pi$ and hence $| \ln \| \z \| |$ is integrable with respect to $\pi$.
\end{proof}

\subsection{Asymptotic Probability of Success}

We investigate now the asymptotic probability of success that comes into play in the convergence rate of the algorithm. Success is defined as whether a candidate solution is better than the current solution $\Xt$, i.e., as
$
P_{\frac{\x}{\sigma}} \left(f ( \X_{t} + \st \Ut^{1} ) \leq f( \X_{t}) \right)
$
and due to the scale-invariant property of $f$, this latter quantity can be expressed with the Markov chain $\Z$ as
$$
 P_{\z} \left( f( \Zt + \Ut^{1} ) \leq f(\Zt) \right) = E_{\z} \left[ 1_{\{ f( \Zt + \Ut^{1} ) \leq f(\Zt) \}} \right] \enspace.
 $$
The convergence of the probability of success is a consequence of the positivity and aperiodicity and can be deduced from \cite[Theorem~14.0.1]{Tweedie:book1993}.
\begin{proposition}[Asymptotic probability of success]
Let the $(1+1)$-ES with generalized one-fifth success rule optimize $h = g \circ f$ where $g \in \Monotone$ and $f$ satisfies Assumptions~\ref{ass:f2}. Assume that $\factonefifth > 1$ and $\frac12 (1/\factonefifth^{\alpha} + \factonefifth^{\alpha/q}) < 1 $. Let $\pi$ be the invariant probability measure of the normalized Markov chain $\Z$. 
Then for any initial condition $(\x,\sigma) \in \Rnstar \times \Rplusstar$, the following holds
%, If $f(\z) < \infty$ and $1/f(\z) < \infty$
%%
\begin{equation}\label{eq:limProbaSuccess}
\PS:=
\lim_{t \to \infty} 
 P_{\frac{\x}{\sigma}} \left(f ( \X_{t} + \st \Ut^{1} ) \leq f( \X_{t}) \right)
= \int 1_{\{f(\y + \n) \leq f(\y)\}}(\y,\n) \pi(d\y) \pp(\n) d \n  \enspace.
\end{equation}
\end{proposition}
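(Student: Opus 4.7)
The plan is to express the finite-time probability of success as an expectation of a bounded measurable function evaluated along the normalized chain $\Z_t$, and then invoke the ergodic theorem for positive Harris recurrent aperiodic chains.

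First I would use the positive homogeneity of $f$ to reduce the quantity of interest to a statistic of $\Z_t$. Since $\st > 0$ and $f$ is positively homogeneous of degree $\alpha$, the identities $\X_t = \st \Z_t$ and $\X_t + \st \Ut^1 = \st (\Z_t + \Ut^1)$ give $f(\X_t + \st \Ut^1) = \st^\alpha f(\Z_t + \Ut^1)$ and $f(\X_t) = \st^\alpha f(\Z_t)$, so the events $\{f(\X_t + \st \Ut^1) \leq f(\X_t)\}$ and $\{f(\Z_t + \Ut^1) \leq f(\Z_t)\}$ coincide. Because $\Ut^1$ is independent of the whole past (and hence of $\Z_t$), conditioning on $\Z_t$ yields
$$
P_{\x/\sigma}\!\left( f(\X_t + \st \Ut^1) \leq f(\X_t) \right) \;=\; E_{\x/\sigma}\!\left[ F(\Z_t) \right],
$$
where $F : \ZZ \to [0,1]$ is the bounded measurable function $F(\z) := \Pr(f(\z+\aN) \leq f(\z))$ with $\aN \sim \aN(0,\Id)$.

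Next I would invoke the ergodic theorem. Under the hypotheses of the proposition, the $\varphi$-irreducibility of Section~\ref{sec:ISSA}, the aperiodicity from Proposition~\ref{prop:D-6-7smallSets}, the geometric drift of Theorem~\ref{prop:OPO} and the positive Harris recurrence of Corollary~\ref{coro:HR-P} are all in force. Hence \cite[Theorem~14.0.1]{Tweedie:book1993} applies and delivers $\|P^t(\z,\cdot) - \pi\|_{TV} \to 0$ as $t \to \infty$ for every $\z$ in the full absorbing set $S_V$ of the drift function; here $S_V = \Rnstar$ because the function $V$ of \eqref{eq:driftOPO} is finite at every nonzero state. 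In particular, for every initial condition $\z = \x/\sigma \in \Rnstar$ and every bounded measurable $G$, we have $E_{\z}[G(\Z_t)] \to \int G\,d\pi$, which applied to our $F$ gives
$$
E_{\x/\sigma}[F(\Z_t)] \;\xrightarrow[t \to \infty]{}\; \int F(\y)\, \pi(d\y).
$$

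Finally, rewriting $F$ in its integral form and applying Fubini's theorem (legitimate because the integrand is nonnegative and bounded by $1$ while $\pi$ is a probability measure) gives
$$
\int F(\y)\, \pi(d\y) \;=\; \int \!\! \int 1_{\{f(\y+\n) \leq f(\y)\}}(\y,\n)\, \pp(\n)\, d\n\; \pi(d\y),
$$
which is exactly the right-hand side of the claimed formula. The proof chains together existing tools in a direct way; the only genuine manipulation is the scale-invariant reduction of the first step, and the only subtle point is checking that the initial normalized state belongs to the convergence set $S_V$, which is immediate since $S_V = \Rnstar$.
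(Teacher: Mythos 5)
Your proposal is correct and follows essentially the same route as the paper: reduce the success probability to $E_{\x/\sigma}[w(\Z_t)]$ with the bounded function $w(\z)=\Pr(f(\z+\aN)\leq f(\z))$ via scale invariance, then apply the ergodic theorem \cite[Theorem~14.0.1]{Tweedie:book1993} (with $k\equiv 1$, justified by the geometric drift of Theorem~\ref{prop:OPO} and positivity from Corollary~\ref{coro:HR-P}) to get convergence of $P^t(\z,\cdot)$ to $\pi$ and hence of the expectation to $\int w\,d\pi$. Your extra remarks (that the convergence set $S_V$ is all of $\Rnstar$, and the Fubini rewriting) are correct and only make explicit points the paper leaves implicit.
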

\begin{proof}
From \cite[Theorem~14.0.1]{Tweedie:book1993}, given a $\psi$-irreducible and aperiodic chain, given a function $k \geq 1$, if the chain is positive recurrent with invariant probability measure $\pi$ and $\pi(k) = \int \pi(d\z) k(\z) < \infty$, then for any $\z \in S_{\tilde V}= \{ \z : \tilde V(\z) < \infty \} $ where the function $\tilde V$ is an extended-valued function satisfying
\begin{equation}\label{eq:driftGEO-ERGO}
\Delta \tilde V(\z) \leq - k(\z) + b 1_{C}(\z) \enspace
\end{equation}
for some petite set $C$ and $b \in \R$, $\|P^{t}(\z,.) - \pi  \|_{k} \to 0$ holds. We take here $k(\z)=1$ and hence the geometric drift proven in Theorem~\ref{prop:OPO} implies also \eqref{eq:driftGEO-ERGO}. From Corollary~\ref{coro:HR-P}, the chain is positive and hence the function $k(\y)=1$ is integrable w.r.t. $\pi$.
Remark that 
$$
P_{\z} \left( f( \Zt + \Ut^{1} ) \leq f(\Zt) \right) 
%= \int 1_{\{ f(\y + \n) \leq f(\y) \}}(\n) \pp(\n) P^{t}(\z, \y) d \y d \n    
= \int w(\y) P^{t}(\z,\y) d\y
$$
where $w(\y) = \int 1_{\{f(\y + \n) \leq f(\y) \}}(\n) \pp(\n) d \n  $, then $w(\y) \leq 1$ and hence from \cite[Theorem 14.0.1]{Tweedie:book1993} we deduce that ($\z = \x / \sigma$)
$$
| P_{\z} \left( f( \Zt + \Ut^{1} ) \leq f(\Zt) \right)  - \int w(\y) \pi(d \y) | \leq \| P^{t}(\z,.) - \pi \|_{\y \mapsto 1} \xrightarrow[t \to \infty]{} 0 \enspace.
$$
\end{proof}
We also derive a Law of Large Numbers for the asymptotic probability of success.
\begin{proposition}\label{prop:asCVPS}
Let the $(1+1)$-ES with generalized one-fifth success rule optimize $h = g \circ f$ where $g \in \Monotone$ and $f$ satisfies Assumptions~\ref{ass:f2}. Assume that $\factonefifth > 1$ and $\frac12 (1/\factonefifth^{\alpha} + \factonefifth^{\alpha/q}) < 1 $. Let $(\Zt=\Xt/\st)_{t \in \NNN}$ be the normalized Markov chain associated to $(\Xt,\st)_{t \in \NNN}$. Then for all initial condition $(\X_{0},\sigma_{0})$
\begin{equation}\label{eq:LLN-AS}
\frac{1}{t} \sum_{k=0}^{t-1} 1_{\{ f( \X_{k} + \sigma_{k} \U_{k}^{1} ) \leq f( \X_{k})  \}} 
=
\frac{1}{t} \sum_{k=0}^{t-1} 1_{\{ f( \Z_{k} + \U_{k}^{1} ) \leq f( \Z_{k})  \}} 
\xrightarrow[t \to \infty]{} \PS 
%\int 1_{\{f(\z + \y) \leq f(\z)\}} \pi(d\z) \pp(\y) d \y \enspace .
\end{equation}
where the asymptotic probability of success $\PS$ is defined in \eqref{eq:limProbaSuccess}.
\end{proposition}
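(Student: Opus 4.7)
The equality between the two averages in \eqref{eq:LLN-AS} is immediate from the scale-invariance of $f$ together with $\Zt = \Xt/\st$, so it suffices to establish that $\frac{1}{t}\sum_{k=0}^{t-1} 1_{\{f(\Z_k + \U_k^1) \leq f(\Z_k)\}}$ converges almost surely to $\PS$. The strategy is to split this sum into its conditional-expectation part, which is a Birkhoff-type average along the Markov chain $\Z$, and a martingale-difference remainder. The first part is handled by a Markov-chain strong law of large numbers and the second by the standard martingale SLLN for bounded increments.

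\emph{Birkhoff part.} Define $w(\z) := \Pr(f(\z+\aN) \leq f(\z)) = \int 1_{\{f(\z+\n)\leq f(\z)\}}(\n) \pp(\n) d\n$, a measurable function bounded by $1$ on $\ZZ$, hence in $L^1(\pi)$ for any probability measure $\pi$. Under the assumptions of the proposition, Corollary~\ref{coro:HR-P} asserts that $\Z$ is positive Harris recurrent with invariant probability measure $\pi$. The strong law of large numbers for positive Harris chains \cite[Theorem~17.0.1]{Tweedie:book1993} therefore yields, for any deterministic initial condition $\Z_0 = \X_0/\sigma_0 \in \ZZ$,
\begin{equation*}
\frac{1}{t}\sum_{k=0}^{t-1} w(\Z_k) \xrightarrow[t \to \infty]{\mathrm{a.s.}} \int w(\y) \pi(d\y) = \PS,
\end{equation*}
where the last equality is \eqref{eq:limProbaSuccess}.

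\emph{Martingale part.} Set $\mathcal{F}_k := \sigma(\Z_0, \U_0^1, \ldots, \U_{k-1}^1)$ so that $\Z_k$ is $\mathcal{F}_k$-measurable and $\U_k^1 \sim \N(0, \Id)$ is independent of $\mathcal{F}_k$. Put $D_k := 1_{\{f(\Z_k + \U_k^1) \leq f(\Z_k)\}} - w(\Z_k)$. Conditioning on $\mathcal{F}_k$ and using the definition of $w$ gives $E[D_k \mid \mathcal{F}_k] = 0$, so $M_t := \sum_{k=0}^{t-1} D_k$ is a martingale with $|D_k| \leq 1$. Azuma-Hoeffding yields $\Pr(|M_t| \geq t\epsilon) \leq 2\exp(-t\epsilon^2/2)$, which is summable in $t$ for every $\epsilon > 0$, so Borel-Cantelli gives $M_t/t \to 0$ almost surely. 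Adding this to the Birkhoff limit above establishes \eqref{eq:LLN-AS}.

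\emph{Where the work lies.} Both ingredients are essentially off-the-shelf once one has positive Harris recurrence; the only subtle point is that the functional in the sum depends on the driving noise $\U_k^1$ and not on $\Z_k$ alone, but this is precisely what the martingale-difference decomposition circumvents, since conditioning on $\mathcal{F}_k$ integrates out $\U_k^1$ and returns the chain functional $w(\Z_k)$ to which the Markov-chain SLLN directly applies. Hence the main obstacle has already been overcome in the stability analysis of Section~\ref{sec:stability}, and the present proof is a short application of standard limit theorems.
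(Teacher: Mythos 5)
Your proof is correct, and it is in fact more careful than the one in the paper on one point. The paper's own argument is a two-line application of exactly your ``Birkhoff part'': positive Harris recurrence from Corollary~\ref{coro:HR-P} plus the LLN of \cite[Theorem~17.0.1]{Tweedie:book1993} applied to the bounded function $w(\y)=\int 1_{\{f(\y+\n)\leq f(\y)\}}(\n)\pp(\n)\,d\n$, and it then asserts \eqref{eq:LLN-AS} directly, glossing over the fact that the summand $1_{\{f(\Z_k+\U_k^1)\leq f(\Z_k)\}}$ is a function of the pair $(\Z_k,\U_k^1)$ rather than of $\Z_k$ alone (the authors acknowledge this subtlety only in a side note, and the fix they sketch --- and actually use later in the proof of the CLT, Theorem~\ref{CLT:OPO} --- is to lift everything to the extended chain $(\Z_t,\U_t^1)$, which inherits positive Harris recurrence and has invariant measure $\pi\otimes\pp$, so that the indicator becomes a genuine chain functional and the same LLN applies verbatim). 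Your martingale-difference decomposition with Azuma--Hoeffding and Borel--Cantelli handles this dependence while staying on the original chain: it is elementary, self-contained, and makes the use of the independence of $\U_k^1$ from the past explicit. The extended-chain route buys uniformity with the rest of the paper (the same device is needed for the CLT anyway) and avoids invoking any concentration inequality; your route buys a cleaner separation between the Markov-chain input (only the LLN for $w$) and a purely probabilistic remainder. Both hinge on the same key ingredient, the positive Harris recurrence established in Corollary~\ref{coro:HR-P}, and your identification of the two averages in \eqref{eq:LLN-AS} via positive homogeneity of $f$ (together with the invariance of the success event under the strictly increasing $g$) matches the paper, as does the identification of the limit with $\PS$ from \eqref{eq:limProbaSuccess}.
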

\begin{proof}
From Corollary~\ref{coro:HR-P}, the chain is positive and Harris recurrent. The function $\y \mapsto w(\y) = \int 1_{\{f(\y + \n) \leq f(\y) \}}(\n) \pp(\n) d \n $ being integrable w.r.t.\ the stationary measure $\pi$, we can thus apply the Law of Large Numbers (\cite[Theorem 17.0.1]{Tweedie:book1993}) that gives us \eqref{eq:LLN-AS}.
\end{proof}

\subsection{Almost Sure Linear Convergence}

Almost sure linear convergence derives from the application of a Law of Large Number (LLN) (see Theorem~5.2 in \citecompanion). Some assumptions to be able to apply a LLN to $(\Zt)_{t \in \NNN}$ are
positivity, Harris-recurrence and integrability of $\ln \| \z \|$. We are then now ready to prove the almost sure linear convergence of the $(1+1)$-ES with generalized one-fifth success rule.

\begin{theorem}
Let the $(1+1)$-ES with generalized one-fifth success rule optimize $h = g \circ f$ where $g \in \Monotone$ and $f$ satisfies Assumptions~\ref{ass:f2}. Assume that $\factonefifth > 1$ and $\frac12 (1/\factonefifth^{\alpha} + \factonefifth^{\alpha/q}) < 1 $. Then for all initial condition $(\X_{0}, \sigma_{0})$ almost sure linear convergence for the mean vector and for the step-size holds, i.e.,
\begin{align}\label{eq:asCVx}
& \frac{1}{t} \ln \frac{\| \X_{t} \|}{\| \X_{0} \|}  \xrightarrow[t \to \infty]{} \ln \gamma \left( \frac{q+1}{q} \PS - \frac1q  \right)
%\begin{equation}
\\\label{eq:asCVsigma}
& \frac{1}{t} \ln \frac{\sigma_{t} }{ \sigma_{0} }  \xrightarrow[t \to \infty]{} \ln \gamma \left( \frac{q+1}{q} \PS - \frac1q  \right) \enspace .
\end{align}
where $\PS$ is the asymptotic probability of success defined in \eqref{eq:limProbaSuccess}.
\end{theorem}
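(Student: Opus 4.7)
The plan is to reduce both almost sure limits to the Law of Large Numbers for the Harris recurrent chain $\Z$ already obtained in Proposition~\ref{prop:asCVPS}, and to show that $\frac{1}{t}\ln\|\Z_t\|$ vanishes almost surely.

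First, I would establish \eqref{eq:asCVsigma} by a direct decomposition of the step-size evolution. Taking the logarithm of \eqref{eq:update-ss} and telescoping,
\begin{equation*}
\ln\frac{\sigma_t}{\sigma_0}=\sum_{k=0}^{t-1}\ln\frac{\sigma_{k+1}}{\sigma_k}=\ln\gamma\sum_{k=0}^{t-1}\Bigl[\frac{q+1}{q}\,1_{\{f(\X_k+\sigma_k\U_k^1)\le f(\X_k)\}}-\frac{1}{q}\Bigr].
\end{equation*}
Dividing by $t$ and applying Proposition~\ref{prop:asCVPS}, under which $\frac{1}{t}\sum_{k=0}^{t-1}1_{\{f(\X_k+\sigma_k\U_k^1)\le f(\X_k)\}}\to\PS$ almost surely from any initial $(\X_0,\sigma_0)$, we immediately get the announced limit $\ln\gamma\bigl(\frac{q+1}{q}\PS-\frac{1}{q}\bigr)$.

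Next, to obtain \eqref{eq:asCVx} I use the identity $\X_t=\sigma_t\Z_t$, so that
\begin{equation*}
\frac{1}{t}\ln\frac{\|\X_t\|}{\|\X_0\|}=\frac{1}{t}\ln\frac{\sigma_t}{\sigma_0}+\frac{1}{t}\ln\frac{\|\Z_t\|}{\|\Z_0\|}.
\end{equation*}
Given Step~1, it suffices to show that the second term tends to zero a.s.\ from every initial $\Z_0\in\ZZ$. This is where I would invoke Theorem~5.2 of \citecompanion, whose hypotheses are exactly positive Harris recurrence of $\Z$ (granted by Corollary~\ref{coro:HR-P}) and $\pi$-integrability of $\z\mapsto\ln\|\z\|$ (granted by Lemma~\ref{lem:integrability}). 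For a self-contained argument, I would first work under the stationary measure and use the integer tail formula together with $\pi$-integrability of $\ln\|\z\|$ to obtain $\sum_{t\ge 1}\pi(|\ln\|\z\||>\epsilon t)<\infty$, whence $|\ln\|\Z_t\||/t\to 0$ holds $P_\pi$-a.s.\ by Borel--Cantelli; extension to arbitrary initial conditions then follows from Harris recurrence via a coupling with the stationary version (the coupling can be initiated at the first return time to a small set, which is almost surely finite).

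The two a.s.\ limits coincide by construction, proving both \eqref{eq:asCVx} and \eqref{eq:asCVsigma}. The main obstacle is the transfer of the vanishing of $\frac{1}{t}\ln\|\Z_t\|$ from the stationary setting to an arbitrary initial condition. This is straightforward once Harris recurrence is invoked, but it is the only nontrivial ingredient beyond the mechanical telescoping of Step~1, and it is the reason why the stability analysis of Section~\ref{sec:stability}, and in particular the geometric drift yielding integrability of $\ln\|\z\|$ against $\pi$, is needed here.
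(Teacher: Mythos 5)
Your proof is correct, and its skeleton largely matches the paper's: the step-size limit \eqref{eq:asCVsigma} is obtained exactly as in the paper by telescoping $\ln\GG_{2}(1,\Y_k)$ and invoking Proposition~\ref{prop:asCVPS}, and your primary route for \eqref{eq:asCVx} (apply Theorem~5.2 of \citecompanion, whose hypotheses are supplied by Corollary~\ref{coro:HR-P} and Lemma~\ref{lem:integrability}) is precisely what the paper does. Where you genuinely diverge is in the self-contained treatment of the remaining term $\frac1t\ln\frac{\|\Z_t\|}{\|\Z_0\|}$. The paper writes it as the difference of the two Birkhoff sums $\frac1t\sum_{k=0}^{t-1}\ln\|\Z_{k+1}\|$ and $\frac1t\sum_{k=0}^{t-1}\ln\|\Z_k\|$ and applies the Meyn--Tweedie LLN twice; since the LLN for positive Harris chains is valid from \emph{every} initial point, no transfer argument is needed and both sums converge to $\int\ln\|\z\|\,\pi(d\z)$, so their difference vanishes. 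You instead prove $\frac1t|\ln\|\Z_t\||\to 0$ directly: under $P_\pi$ each $\Z_t\sim\pi$, the tail-sum bound $\sum_t\pi(|\ln\|\z\||>\epsilon t)\le\epsilon^{-1}\int|\ln\|\z\||\,d\pi<\infty$ plus Borel--Cantelli gives the claim $P_\pi$-a.s., and you then transfer to arbitrary initial conditions via Harris recurrence. This is a more elementary argument (it uses only marginal stationarity rather than the ergodic theorem), but the transfer step is the one place where your wording is loose: ``coupling initiated at the first return time to a small set'' does not by itself couple the two trajectories, since hitting the small set at different times and points is not a regeneration. The clean justification is either the Nummelin split-chain regeneration, or, more simply, the observation that $\{\frac1t\ln\|\Z_t\|\to 0\}$ is a shift-invariant event, and for a Harris recurrent chain $\z\mapsto P_\z(A)$ is constant for invariant $A$ (bounded harmonic functions are constant), so its value equals $P_\pi(A)=1$ for every $\z\in\ZZ$. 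With that one-line repair your argument is complete; the paper's double-LLN route simply buys you this uniformity in the initial condition for free.
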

\begin{proof}
The proof follows the same line as the proof of Theorem~5.2 in \citecompanion.
We start by re-writing the log-progress:
\begin{multline*}
\frac{1}{t} \ln \frac{\| \X_{t} \|}{\| \X_{0} \|} = \frac1t \sum_{k=0}^{t-1} \ln \frac{\| \X_{k+1} \|}{\|  \X_{k} \|} 
= \frac1t \sum_{k=0}^{t-1} \ln \frac{\sigma_{k+1} \| \Z_{k+1}\| }{\sigma_{k} \| \Z_{k}  \|}
= \frac1t \sum_{k=0}^{t-1} \ln \frac{\GG_{2}(1,\Y_{k}) \| \Z_{k+1}\| }{ \| \Z_{k}  \|}  \\
%= \frac1t \sum_{k=0}^{t-1} \ln \frac{\| \Z_{k+1} \|}{\|  \Z_{k} \|} \GG_{2}(1,\Y_{k})
% \etastar(\Z_{k}) 
%= \frac1t \sum_{k=0}^{t-1} \ln \frac{\| \Z_{k} + \U_{k} 1_{f(\Z_{k}+ \U_{k}) \leq f(\Z_{k}) } \|}{\|  \Z_{k} \|} \etastar(\Z_{k}) 
= \underbrace{\frac1t \sum_{k=0}^{t-1} \ln \| \Z_{k+1}  \|}_{A} - \underbrace{\frac1t \sum_{k=0}^{t-1} \ln \| \Z_{k}  \|}_{B} + \underbrace{\frac1t \sum_{k=0}^{t-1}  \ln \GG_{2}(1,\Y_{k})}_{C}
\end{multline*}
where we have used the scale-invariant property \eqref{eq:SIG2} and the fact that $\Y_{k} = \Perm_{(\Xt,\st)}(\U_{k}) * \Ut  = \Perm_{(\Z_{k},1)}(\U_{k})
$.
%\smalltodo{notations: check w.r.t. what is before}
%\smalltodo{why would I need that:
%$
%1_{\Y_{k}^{1} \neq 0} = 1_{\{ f( \Z_{k} + \U_{k}) \leq f(\Z_{k}) \}}
%$
%}
Since $\ln \| \z \|$ is integrable w.r.t.\ the stationary measure $\pi$ we can apply the LLN to the terms $A$ and $B$ and we find that they both converge towards $\int \ln \| \z \| \pi(d \z)$ such that $A$ minus $B$ converges to zero. Let us investigate the term $C$, since
$
\GG_{2}(1,\Yt) = (\factonefifth - \factonefifth^{-1/q}) 1_{\{ f( \Z_{t} + \U_{t}^{1}) \leq f(\Z_{t}) \}} + \factonefifth^{-1/q}
$
we find that
$
\ln \GG_{2}(1,\Yt) =  \ln \factonefifth ( ( 1 + 1/q )   1_{\{ f( \Z_{t} + \U_{t}^{1}) \leq f(\Z_{t}) \}} - \frac1q ).
$
%$$
%\frac1t \sum_{k=0}^{t-1} \ln \| \Z_{k}  \| \xrightarrow[]{} \int \ln \| \z \| \pi ( d \z)
%$$
%
%$$
%\frac1t \sum_{k=0}^{t-1} \ln \| \Z_{k+1}  \| = \frac1t \sum_{k=1}^{t} \ln \| \Z_{k}  \| \xrightarrow[]{} \int \ln \| \z \| \pi ( d \z)
%$$
Therefore
\begin{multline}\label{eq:tchoumb}
C = \frac1t \sum_{k=0}^{t-1}  \ln \GG_{2}(1,\Y_{k}) =   \ln \factonefifth \left( 1 + \frac1q \right)  \frac1t \sum_{k=0}^{t-1} 1_{\{ f( \Z_{k} + \U_{k}^{1}) \leq f(\Z_{k}) \}}  - \frac1q  \ln \factonefifth  \xrightarrow[]{} \\ 
\ln \factonefifth \left( 1 + \frac1q \right) \int 1_{\{ f(\z + \uu) \leq f(\z)  \}}(\z,\uu) \pi( d \z) \pp(\uu) d \uu  - \frac1q  \ln \factonefifth = \ln \gamma \left[ \left(1+\frac1q\right) \PS - \frac1q \right]
\end{multline}
where we have used Proposition~\ref{prop:asCVPS} for the latter limit.
%no problem to apply Fubini for this later case as:
%$$
%\int [ \int 1_{\{ f(\z + \uu) \leq f(\z)  \}} \pi( d \z) ] \pp(\uu) d \uu \leq \int [ \int  \pi( d \z) ] \pp(\uu) d \uu = 1
%$$
The limit \eqref{eq:asCVsigma} follows from the fact that 
$$
\frac1t \ln \frac{\sigma_{t}}{\sigma_{0}} = \frac1t \sum_{k=0}^{t-1} \ln \frac{\sigma_{k+1}}{\sigma_{k}} =  \frac1t \sum_{k=0}^{t-1} \ln  \GG_{2}(1,\Y_{k})
$$
with as above  $\Y_{k} = \Perm_{(\Xt,\st)}(\U_{k}) * \Ut  = \Perm_{(\Z_{k},1)}(\U_{k})
$. Using~\eqref{eq:tchoumb} we obtain \eqref{eq:asCVsigma}.
\end{proof}

We define the convergence rate as minus the almost sure limit of the logarithm of $\| \Xt \|$ or of $\sigma_{t}$ that corresponds to minus expectation of the logarithm of the step-size change w.r.t. the stationary distribution, i.e.,
\begin{equation}\label{eq:CR}
%\boxed{\CR := - E_{\pi} \left[ \ln \etastar \right] = - \int \ln \GG_{2}(1,\Perm_{(\z,1)}(\uu)) \pp(\uu) d \uu \pi(d \z) = - \ln \gamma   \left( \frac{q+1}{q} \PS - \frac1q	  \right) } \enspace.
\boxed{\CR := - E_{\pi} \left[ \ln \etastar \right] 
=
%= - \int \ln \GG_{2}(1,\Perm_{(\z,1)}(\uu)) \pp(\uu) d \uu \pi(d \z) = 
- \ln \gamma   \left( \frac{q+1}{q} \PS - \frac1q	  \right) } \enspace.
\end{equation}
Figure~\ref{fig:simul} presents some convergence graphs of the $(1+1)$-ES with generalized $1/5$ success rule. The slope of the linear decrease observed in log scale (after a small adaptation period on the left graph) corresponds to $- \CR$.

\paragraph{Sign of the convergence rate}
Convergence will take place if $\CR > 0$. We prove in the next proposition an alternative expression for the convergence rate that allows us to conclude that $\CR > 0$.
\begin{proposition}\label{prop:SignCR}
Let the $(1+1)$-ES with generalized one-fifth success rule optimize $h = g \circ f$ where $g \in \Monotone$ and $f$ satisfies Assumptions~\ref{ass:f2}. Assume that $\factonefifth > 1$ and $\frac12 (1/\factonefifth^{\alpha} + \factonefifth^{\alpha/q}) < 1 $. Let $\CR$ be the convergence rate of the algorithm given in \eqref{eq:CR}. Then 
\begin{align}\label{eq:CR2}
\CR & = - \frac{1}{\alpha} E_{\pi} \left(\ln \frac{f(\Zt+\Ut^{1}1_{\{f(\Zt+\Ut^{1}) \leq f(\Zt)  \} }) }{f(\Zt)}  \right) \\
& =  - \frac{1}{\alpha} 
\int \ln 
\frac{f(\z+ \n 1_{f(\z+\n) \leq f(\z)})}
{f(\z)} 
\pp(\n) p_{\pi}(\z) d\z d \n
\end{align}
where $p_{\pi}$ is the density of the invariant probability measure $\pi$ with respect to the Lebesgue measure.
Consequently $\CR > 0$, i.e., linear \emph{convergence} indeed takes place.
\end{proposition}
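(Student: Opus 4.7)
My starting point is the multiplicative form of the normalized chain update \eqref{eq:transitionZ}, namely $\etastar \Ztt = \Zt + \Ut^{1} 1_{\{f(\Zt+\Ut^{1}) \leq f(\Zt)\}}$. Applying $f$ to both sides and exploiting positive homogeneity of degree $\alpha$ would yield the pointwise identity $(\etastar)^{\alpha} f(\Ztt) = f(\Zt + \Ut^{1} 1_{\{f(\Zt+\Ut^{1}) \leq f(\Zt)\}})$. Taking logarithms and rearranging gives
$$
\alpha\ln \etastar \;=\; \ln \frac{f(\Zt + \Ut^{1} 1_{\{f(\Zt+\Ut^{1}) \leq f(\Zt)\}})}{f(\Zt)} + \bigl(\ln f(\Zt) - \ln f(\Ztt)\bigr) \enspace .
$$
Under the stationary measure $\pi$, the random vectors $\Zt$ and $\Ztt$ share the same law, so if $\ln f$ is $\pi$-integrable then the parenthesized term vanishes in expectation. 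To secure this, I plan to combine the two-sided bound $m\|\z\|^{\alpha} \leq f(\z) \leq M\|\z\|^{\alpha}$ from Lemma~\ref{lem:too} (which gives $|\ln f(\z)| \leq c + \alpha|\ln\|\z\||$ for a constant $c$) with the $\pi$-integrability of $|\ln\|\z\||$ proven in Lemma~\ref{lem:integrability}. Dividing by $\alpha$ and using definition \eqref{eq:CR} of $\CR$ will then produce \eqref{eq:CR2}.

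For the sign, the numerator $f(\Zt+\Ut^{1}1_{\{f(\Zt+\Ut^{1})\leq f(\Zt)\}})$ is always $\leq f(\Zt)$ by construction (in the rejection branch the ratio equals $1$, in the acceptance branch it equals $f(\Zt+\Ut^{1})/f(\Zt) \leq 1$), so the integrand in \eqref{eq:CR2} is pointwise $\leq 0$ and $\CR \geq 0$ is automatic. To upgrade to $\CR > 0$, I need a set of positive $\pi \otimes \pp$-measure on which $f(\z+\n) < f(\z)$ strictly. Continuous differentiability of $f$ together with Euler's identity gives property \eqref{prop-grad}, namely $\nabla f(\z) \neq 0$ for every $\z \in \Rnstar$. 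A first-order expansion along the descent direction $-\nabla f(\z)/\|\nabla f(\z)\|$ then produces, for each $\z \in \Rnstar$, an open set of $\n$'s satisfying $f(\z+\n) < f(\z)$, which necessarily has positive standard Gaussian mass. Since $\pi$ is a probability measure supported on $\Rnstar$, Fubini delivers $(\pi \otimes \pp)(\{f(\z+\n) < f(\z)\}) > 0$, making the expectation in \eqref{eq:CR2} strictly negative and hence $\CR > 0$.

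The main delicate point, in my view, is the integrability step used to detach $\ln f(\Zt)$ from $\ln f(\Ztt)$: the raw integrand $\ln\bigl(f(\z+\n 1_{\{f(\z+\n) \leq f(\z)\}})/f(\z)\bigr)$ is bounded above by $0$ but can be arbitrarily negative when $\z+\n$ lands close to the optimum, so a priori $\pi$-integrability of $\ln f$ must be in hand before invoking stationarity to cancel the two boundary terms. Once that is established, everything else reduces to the homogeneity identity, the stationarity-driven cancellation, and a straightforward Taylor/Gaussian argument for strict monotonicity.
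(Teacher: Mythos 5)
Your argument is essentially the paper's own proof: the identity $\alpha\ln\etastar=\ln f(\Zt+\Ut^{1}1_{\{f(\Zt+\Ut^{1})\leq f(\Zt)\}})-\ln f(\Ztt)$ from homogeneity, cancellation of $E_\pi[\ln f(\Ztt)-\ln f(\Zt)]$ by stationarity, pointwise nonpositivity of the integrand for $\CR\geq 0$, and strict negativity on a set of positive measure for $\CR>0$; you even make explicit two points the paper leaves implicit (the $\pi$-integrability of $\ln f$ via Lemmas~\ref{lem:too} and \ref{lem:integrability}, and the descent-direction argument giving positive Gaussian mass to $\{f(\z+\n)<f(\z)\}$ for every $\z\in\Rnstar$). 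The only ingredient you do not address is the existence of the Lebesgue density $p_{\pi}$ appearing in the second displayed form, which the paper obtains from the equivalence of $\pi$ with the (Lebesgue) maximal irreducibility measure; this is a minor bookkeeping step and does not affect your derivation of \eqref{eq:CR2} or of $\CR>0$.
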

\begin{proof}
Because $\pi$ is the invariant probability measure of $\Z$, if $\Z_{0} \sim \pi$ then for all $t$, $\Zt \sim \pi$ such that
$
E_{\pi} \left[ \ln \frac{f(\Ztt)}{f(\Zt)} \right] 
%= E_{\pi} [ \ln f(\Ztt) ] - E_{\pi} [ \ln f(\Zt) ] 
= 0.
$
On the other hand, since $\Ztt = (\Zt+\Ut^{1}1_{\{f(\Zt+\Ut^{1}) \leq f(\Zt)  \} })/\etastar$ we deduce that
$$
E_{\pi} \left[ \ln \frac{f(\Ztt)}{f(\Zt)} \right]  = E_{\pi} \left[ - \alpha \ln \etastar + \ln \frac{f(\Zt+\Ut^{1}1_{\{f(\Zt+\Ut^{1}) \leq f(\Zt)  \} }) }{f(\Zt)} \right] = 0
$$
Thus
\begin{align}
\CR = - E_{\pi} \left[ \ln \etastar \right] & = - \frac{1}{\alpha} E_{\pi} \left(\ln \frac{f(\Zt+\Ut^{1}1_{\{f(\Zt+\Ut^{1}) \leq f(\Zt)  \} }) }{f(\Zt)}  \right) \\
& =  - \frac{1}{\alpha} 
\int \ln 
\frac{f(\z+ \n 1_{f(\z+\n) \leq f(\z)})}
{f(\z)} 
\pp(\n) p_{\pi}(\z) d\z d \n
\end{align}
where in the previous equation we have used the fact that according  to \cite[Theorem~10.4.9]{Tweedie:book1993}, $\pi$ and the maximal irreducibility measure for $\Z$ are equivalent. Hence $\pi$ is equivalent to the Lebesgue measure. We denoted $p_{\pi}$ its density.
Since $\frac{f(\z+ \n 1_{f(\z+\n) \leq f(\z)})}
{f(\z)} \leq 1 $ we see that $\CR \geq  0$. However $\CR =0$ is impossible as it would imply that $\ln 
\frac{f(\z+ \n 1_{f(\z+\n) \leq f(\z)})}
{f(\z)} 
\pp(\n) p_{\pi}(\z) = 0$ almost everywhere.
\end{proof}

The fact that the convergence rate $\CR$ is strictly positive is equivalent to having the asymptotic probability of success satisfying $\PS < 1/(q+1)$. In the case where the target probability of success is $1/5$ (as proposed in \cite{Schumer:68,Rechenberg}), this implies $\PS < 1/5$. Hence we find that when convergence occurs the asymptotic probability of success is strictly smaller than $1/5$.

In the case of a non elitist algorithm, it is not easy to obtain the sign of the convergence rate and one needs to resort to numerical simulation (see \cite{TCSAnne04}).

\subsection{Linear convergence in expectation}

Linear convergence in expectation is formulated in the next theorem. The proof follows the lines of Theorem~5.3 in \citecompanion.
\begin{theorem}
Let the $(1+1)$-ES with generalized one-fifth success rule optimize $h = g \circ f$ where $g \in \Monotone$ and $f$ satisfies Assumptions~\ref{ass:f2}. Assume that $\factonefifth > 1$ and $\frac12 (1/\factonefifth^{\alpha} + \factonefifth^{\alpha/q}) < 1 $. Then for all initial condition $(\X_{0}, \sigma_{0})=(\x_{0},\sigma_{0}) \in \Rnstar \times \Rplusstar$
\begin{equation}\label{eq:convEXPX}
\lim_{t \to \infty} E_{\frac{\x_{0}}{\sigma_{0}}} \left[\ln \frac{\|\Xtt\|}{\| \Xt \|} \right] = - \CR
\end{equation}
and
\begin{equation}\label{eq:convEXPsigma}
\lim_{t \to \infty} E_{\frac{\x_{0}}{\sigma_{0}}} \left[\ln \frac{\sigma_{t+1}}{\sigma_{t}}\right] = - \CR \enspace.
\end{equation}
\end{theorem}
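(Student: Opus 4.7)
The plan is to reduce both expected log-progress limits to (i) the convergence in law of the step-size change under $P_{\z_0}$ and (ii) the convergence of $E_{\z_0}[\ln\|\Zt\|]$ to $\int \ln\|\z\|\,\pi(d\z)$, both of which follow from geometric ergodicity of $\Z$ together with the integrability established in Lemma~\ref{lem:integrability}. The starting observation is the exact decomposition, valid on scaling-invariant functions,
$$
\ln\frac{\|\Xtt\|}{\|\Xt\|}=\ln\frac{\stt}{\st}+\ln\|\Ztt\|-\ln\|\Zt\|,
$$
together with the explicit formula for the step-size change at iteration $t$,
$$
\ln\frac{\stt}{\st}=\ln\GG_{2}(1,\Yt)=\ln\gamma\,\Bigl(\bigl(1+\tfrac{1}{q}\bigr)1_{\{f(\Zt+\Ut^{1})\leq f(\Zt)\}}-\tfrac{1}{q}\Bigr).
$$
Hence \eqref{eq:convEXPsigma} amounts to showing $E_{\z_0}[1_{\{f(\Zt+\Ut^{1})\leq f(\Zt)\}}]\to\PS$, and once this is granted, \eqref{eq:convEXPX} reduces to showing $E_{\z_0}[\ln\|\Ztt\|]-E_{\z_0}[\ln\|\Zt\|]\to 0$.

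First I would treat the step-size limit. Writing $w(\y):=\int 1_{\{f(\y+\n)\leq f(\y)\}}\pp(\n)\,d\n$, the tower property gives $E_{\z_0}[1_{\{f(\Zt+\Ut^{1})\leq f(\Zt)\}}]=\int w(\y)P^{t}(\z_0,d\y)$. Since $\Z$ is $\psi$-irreducible, aperiodic and positive Harris recurrent by Corollary~\ref{coro:HR-P}, and $w$ is bounded by $1$, the same argument as in the proof of \eqref{eq:limProbaSuccess} (invoking \cite[Theorem~14.0.1]{Tweedie:book1993} with $k\equiv 1$) yields $\int w(\y)P^{t}(\z_0,d\y)\to\int w\,d\pi=\PS$. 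Multiplying by $\ln\gamma(1+1/q)$ and subtracting $\ln\gamma/q$ gives $E_{\z_0}[\ln(\stt/\st)]\to\ln\gamma\bigl(\tfrac{q+1}{q}\PS-\tfrac{1}{q}\bigr)=-\CR$, which is \eqref{eq:convEXPsigma}.

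Next I would prove that $E_{\z_0}[\ln\|\Zt\|]$ converges to $\int\ln\|\z\|\,\pi(d\z)$, so the difference $E_{\z_0}[\ln\|\Ztt\|]-E_{\z_0}[\ln\|\Zt\|]$ tends to $0$. By Theorem~\ref{prop:OPO} the drift function $V$ defined in \eqref{eq:driftOPO} satisfies the geometric drift condition, so inequality \eqref{eq:C-Vnorm} yields constants $R<\infty$, $r>1$ such that
$$
\|P^{t}(\z_0,\cdot)-\pi\|_{V}\leq R\,r^{-t}V(\z_0)
$$
for every $\z_0\in S_V=\Rnstar$. By Lemma~\ref{lem:integrability} we have $|\ln\|\z\||\leq \KK V(\z)$, so $k(\z):=\ln\|\z\|/\KK$ satisfies $|k|\leq V$, and therefore
$$
\Bigl|E_{\z_0}[\ln\|\Zt\|]-\int\ln\|\z\|\,\pi(d\z)\Bigr|\leq \KK\,\|P^{t}(\z_0,\cdot)-\pi\|_{V}\leq \KK R\,r^{-t}V(\z_0)\xrightarrow[t\to\infty]{}0.
$$
Applying this at indices $t$ and $t+1$ and combining with the step-size limit in the decomposition of $\ln(\|\Xtt\|/\|\Xt\|)$ gives \eqref{eq:convEXPX}.

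The only step requiring care is the justification that $|\ln\|\z\||$ is dominated by a constant multiple of the drift function $V$ used to control the $V$-norm convergence; this is precisely the content of Lemma~\ref{lem:integrability}, so no additional obstacle arises. Everything else is a direct transcription of the geometric ergodicity bound, and in fact the argument also delivers the geometric rate of convergence claimed in the introduction for $E_{\z_0}[\ln(\stt/\st)]\to-\CR$, since $w$ is bounded and hence also dominated by $V$.
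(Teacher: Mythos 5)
Your proof is correct, but it takes a somewhat different route from the paper. The paper does not argue directly: it invokes Theorem~5.3 of the companion paper \citecompanion, whose hypothesis is a (non-geometric) drift condition of the form $\Delta V(\z) \leq -(|\ln\|\z\||+1) + b 1_{C}(\z)$, and the whole published proof consists of verifying this condition by rescaling the geometric drift function of Theorem~\ref{prop:OPO} via the domination $|\ln\|\z\|| \leq \KK V(\z)$ of Lemma~\ref{lem:integrability}; the decomposition $\ln(\|\Xtt\|/\|\Xt\|) = \ln(\stt/\st) + \ln\|\Ztt\| - \ln\|\Zt\|$ and the ergodic limit are then hidden inside the companion theorem (which rests on the $f$-norm ergodic theorem of Meyn--Tweedie). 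You instead make that decomposition explicit and conclude directly from results already in the paper: the asymptotic success probability \eqref{eq:limProbaSuccess} handles $E_{\z_0}[\ln(\stt/\st)] \to -\CR$, and the geometric $V$-norm bound \eqref{eq:C-Vnorm} for the drift function \eqref{eq:driftOPO}, combined with the same Lemma~\ref{lem:integrability}, gives $E_{\z_0}[\ln\|\Zt\|] \to \int \ln\|\z\|\,\pi(d\z)$ so that the two $\Z$-terms cancel in the limit. The ingredients are the same (Theorem~\ref{prop:OPO}, Corollary~\ref{coro:HR-P}, Lemma~\ref{lem:integrability}), but your version is self-contained, bypasses the companion theorem, and as a by-product yields a geometric rate for both the step-size term and the $E_{\z_0}[\ln\|\Zt\|]$ term, which the paper's route does not claim for \eqref{eq:convEXPX}. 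One small point you should state explicitly: to split $E_{\z_0}[\ln(\|\Xtt\|/\|\Xt\|)]$ into three expectations you need $E_{\z_0}[\,|\ln\|\Zt\|\,|\,] < \infty$ for every $t$; this follows from the same domination $|\ln\|\cdot\|| \leq \KK V$ together with the finiteness of $\|P^{t}(\z_0,\cdot)-\pi\|_{V}$ and $\pi(V)<\infty$ (or simply by iterating the geometric drift inequality to bound $P^{t}V(\z_0)$), so it is not an obstacle, merely a line to add.
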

\begin{proof}
The conditions of Theorem~5.3 of \citecompanion\ are satisfied. Hence we can conclude to the linear convergence in expectation for all initial condition $(\x_{0},\sigma_{0})$ such that $V(\x_{0}/\sigma_{0}) < \infty$ where $V$ is a function satisfying 
\begin{equation}\label{eq:toptop}
\Delta V(\z) \leq - ( | \ln \| \z \| | + 1) + b 1_{C}(\z) \enspace.
\end{equation}
Let us show that the previous condition is satisfied for a function proportional to the geometric drift function of Theorem~\ref{prop:OPO}. This will hence imply that the initial condition $(\x_{0},\sigma_{0})$ can be taken in $\Rnstar \times \Rplusstar$.

%We will now explain why \eqref{eq:toptop} is satisfied for $V$ a function proportional to the drift function of Theorem~\ref{prop:OPO}. I
Indeed, consider the function $\tilde V$ given in \eqref{eq:driftOPO} (to avoid ambiguity we denote $\tilde V$ the function originally denoted $V$). We have proven that it satisfies a drift condition for geometric ergodicity, that is, there exists some petite set $C$, some constants $b< \infty$ and $0< \vartheta <1$ such that
\begin{equation}\label{eq:intern1}
\Delta \tilde V \leq (\vartheta -1) \tilde V(\z) + b 1_{C}(\z) \enspace.
\end{equation}
Since in Lemma~\ref{lem:integrability} we have proven that $|\ln\| \z \| | \leq \KK \tilde V(\z)$ and $\tilde V \geq 1$, the following inequality holds: $ |\ln\| \z \| | + 1 \leq (\KK + 1) \tilde V(\z)$. We deduce that $(\vartheta - 1) (\KK+1) \tilde V(\z) \leq (\vartheta - 1) (| \ln \| \z \| | + 1) $ and hence 
\begin{equation}\label{eq:intern2}
(\vartheta -1) (\KK+1)\tilde V(\z) /(1 - \vartheta)  \leq - (|\ln \| \z \| |+1)
\end{equation}
Let us take $V = (\KK+1)/(1 - \vartheta) \tilde V$, \eqref{eq:intern1} implies that
$$
\Delta V \leq (\vartheta -1) V(\z) + b \frac{\KK+1}{1 - \vartheta} 1_{C}(\z) \leq - (|\ln \| \z \||+1) + b \frac{\KK+1}{1 - \vartheta} 1_{C}(\z)
$$
where we have used \eqref{eq:intern2} for the latter inequality. Since $V(\z) < \infty$ whenever $f(\z) < \infty$ and $1/f(\z) < \infty$ we deduce from Theorem~5.3 in \citecompanion\ that the limits \eqref{eq:convEXPX} and \eqref{eq:convEXPsigma} hold for all $\x_{0}/\sigma_{0}$ such that  $f(\x_{0}/\sigma_{0}) < \infty$ and $1/f(\x_{0}/\sigma_{0}) < \infty$, i.e., for all $(\x_{0},\sigma_{0}) \in \Rnstar \times \Rplusstar$.
\end{proof}

\subsection{Consequences of Geometric Ergodicity: Adaptivity at a Geometric Rate}

The geometric ergodicity translates that the invariant probability distribution is reached geometrically fast. It implies that from any starting point in $\Rnstar \times \Rplusstar$, the expected (step-size) log progress $E_{\frac{\x_{0}}{\sigma_{0}}} [ \ln \frac{\stt}{\st} ] $ approaches the convergence rate $\CR$ geometrically fast. More precisely we have the following theorem:
%%%%%%%%%%%
%%% THEOREM %%
%%%%%%%%%%%
\begin{theorem}
Let the $(1+1)$-ES with generalized one-fifth success rule optimize $h = g \circ f$ where $g \in \Monotone$ and $f$ satisfies Assumptions~\ref{ass:f2}. Assume that $\factonefifth > 1$ and $\frac12 (1/\factonefifth^{\alpha} + \factonefifth^{\alpha/q}) < 1 $. Then there exists $r > 1$ and $R < \infty$ such that for all initial condition $(\X_{0}, \sigma_{0})=(\x_{0},\sigma_{0}) \in \Rnstar \times \Rplusstar$
\begin{equation}
\sum_{t} r^{t} |E_{\frac{\x_{0}}{\sigma_{0}}} \ln \frac{ \stt }{\st} - \CR | \leq R V(\x_{0}/\sigma_{0}) \enspace.
\end{equation}
This equation implies in particular that for any initial condition $(\x_{0},\sigma_{0}) \in \Rnstar \times \Rplusstar $
\begin{equation}
|E_{\frac{\x_{0}}{\sigma_{0}}} \ln \frac{ \stt }{ \st } - \CR  | r^{t} \xrightarrow[t \to \infty]{} 0
\end{equation}
where $r$ is independent of the starting point.
\end{theorem}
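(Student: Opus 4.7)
The plan is to recognize the expected step-size log-progress as the integral of a bounded function against the $t$-step transition kernel, and then directly invoke the $V$-norm geometric ergodicity bound \eqref{eq:C-Vnorm} from the Geometric Ergodic Theorem that was already established via Theorem~\ref{prop:OPO}.

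First I would rewrite the quantity of interest in Markov-chain form. By the scale-invariance argument already used elsewhere in the paper, the step-size change $\stt/\st = \etastar$ depends only on $\Zt$ and the sampling vector $\Ut^{1}$. Define
$$
w(\z) = E\left[\ln \etastar(\z, \N)\right] = \ln \gamma \cdot \Pr(f(\z+\N)\leq f(\z)) - \frac{1}{q}\ln\gamma \cdot \Pr(f(\z+\N)> f(\z)),
$$
so that $E_{\frac{\x_{0}}{\sigma_{0}}}\ln(\stt/\st) = \int w(\z)\, P^{t}(\z_{0}, d\z)$ with $\z_{0} = \x_{0}/\sigma_{0}$. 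By stationarity of $\pi$ combined with the definition of $\CR$ in \eqref{eq:CR}, one has $-\CR = \int w(\z)\, \pi(d\z)$.

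Second I would observe that $w$ is trivially bounded: since $\etastar \in \{\factonefifth, \factonefifth^{-1/q}\}$ almost surely, $|\ln \etastar| \le \ln \gamma$ (assuming $\gamma > 1$ so that $\ln\gamma > \frac{1}{q}\ln\gamma$; take the larger of the two otherwise), hence $|w(\z)| \le \ln\gamma$ for all $\z \in \ZZ$. Since the drift function $V$ from \eqref{eq:driftOPO} satisfies $V \ge 1$, we obtain $|w| \le (\ln\gamma)\, V$. Consequently, for every $\z_{0} \in \ZZ$,
$$
\left| E_{\frac{\x_{0}}{\sigma_{0}}}\ln \frac{\stt}{\st} - (-\CR) \right| = \left| \int w\, (P^{t}(\z_{0},\cdot) - \pi)(d\z) \right| \le (\ln \gamma)\, \| P^{t}(\z_{0}, \cdot) - \pi \|_{V}
$$
by the very definition of the $V$-norm.

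Third I would invoke Theorem~\ref{prop:OPO}: $V$ is a geometric drift function for $\Z$ in the sense of \eqref{eq:driftgerd}, so the Geometric Ergodic Theorem yields constants $r>1$ and $R<\infty$ such that $\sum_{t} r^{t} \|P^{t}(\z,\cdot) - \pi\|_{V} \le RV(\z)$ for every $\z \in S_{V}$. Because $f$ is finite-valued and strictly positive off zero (Assumptions~\ref{ass:f2} and Lemma~\ref{lem:too}), $V(\z) < \infty$ for every $\z \in \ZZ = \Rnstar$, so $S_{V} = \ZZ$ and the bound applies to every admissible initial condition. Multiplying the previous display by $r^{t}$ and summing gives
$$
\sum_{t} r^{t} \left| E_{\frac{\x_{0}}{\sigma_{0}}}\ln \frac{\stt}{\st} + \CR \right| \le (\ln \gamma)\, R\, V(\x_{0}/\sigma_{0}),
$$
which, after absorbing $\ln\gamma$ into $R$, is exactly the claimed inequality. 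The pointwise geometric convergence statement then follows because the general term of a convergent series must tend to zero, i.e.\ $r^{t}|E_{\frac{\x_{0}}{\sigma_{0}}}\ln(\stt/\st) + \CR| \to 0$. There is no substantial obstacle beyond verifying the boundedness of $w$ and the fact that $S_{V} = \Rnstar$; the heavy lifting was done in establishing the geometric drift in Theorem~\ref{prop:OPO}.
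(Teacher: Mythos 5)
Your proof is correct and is essentially the paper's own argument: both identify $E_{\frac{\x_{0}}{\sigma_{0}}}[\ln(\stt/\st)]$ with $P^{t}(\z_{0},\cdot)$ applied to the bounded function $\z\mapsto \ln\gamma\left(\frac{q+1}{q}\Pr(f(\z+\N)\le f(\z))-\frac{1}{q}\right)$, dominate it by a constant multiple of the geometric drift function $V$ of Theorem~\ref{prop:OPO} (you rescale the function to fit the $V$-norm, the paper equivalently rescales $V$ to $kV$), and conclude via \eqref{eq:C-Vnorm} together with the observation that $S_{V}=\Rnstar$ so every admissible initial condition is covered. The only difference is cosmetic: you center at $-\CR$, i.e.\ write $|E\ln(\stt/\st)+\CR|$, which is the mathematically correct reading of the statement (the limit is $-\CR$), so this is not a gap.
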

%%%%%%%%%%
%%%%%%%%%%
\begin{proof}
Under the assumptions of the theorem we have proven in Theorem~\ref{prop:OPO} that $V$ defined in \eqref{eq:driftOPO} satisfies a geometric drift function. Hence according to \eqref{eq:C-Vnorm} there exists $R>0$ and $r > 1$ such that for any starting point $\z_{0}$ in the set $S_{V}= \{ \z : V(\z) < \infty \} $
\begin{equation}\label{young}
\sum_{t} r^{t} \| P^{t}(\z_{0},.) - \pi \|_{V} \leq R V(\z_{0})
\end{equation}
where $\| \nu \|_{V} = \sup_{g: |g| \leq V} | \nu(g)|$. Remark that if $V$ satisfies a geometric drift condition, then $k V$ satisfies also a geometric drift condition for any constant $k \geq 1$.
Consider the function  $g(\z) = \ln \gamma \left( \frac{q+1}{q} E[1_{\{f(\z + \aN) \leq f(\z)  \}}] - \frac1q \right) $. Then $|g(\z)| \leq \ln \gamma (q+2)/q $ is bounded and hence $|g(\z)| \leq \underbrace{ \ln \gamma (q+2)/q}_{k}  V(\z)$. In addition, $E_{\z_{0}}[\ln \stt/\st]=E_{\z_{0}}[g(\Zt)] = P^{t}(\z_{0},.)(g)$ which yields 
\begin{multline*}
| E_{\z_{0}}\left[\ln \frac{\stt}{\st}\right] - \CR | =  | \int g(\z) P^{t}(\z_{0},d\z) - \int \pi(d\z) g(\z) | = \\ | (P^{t}(\z_{0},.) - \pi)(g)|  \leq  \| P^{t}(\z_{0},.) - \pi \|_{k V} \enspace
\end{multline*}
and thus according to \eqref{young} there exists $R> 0$ and $r >1$ such that for any $(\x_{0},\sigma_{0})$
\begin{equation}
\sum_{t} r^{t} | E_{\frac{\x_{0}}{\sigma_{0}}}\left[\ln \frac{\stt}{\st}\right] - \CR | \leq R V\left(\frac{\x_{0}}{\sigma_{0}} \right) \enspace.
\end{equation}
This equation implies in particular that, for any initial condition $(\x_{0},\sigma_{0})$
\begin{equation}
| E_{\frac{\x_{0}}{\sigma_{0}}}\left[\ln \frac{\stt}{\st}\right] - \CR | r^{t} \xrightarrow[t \to \infty]{} 0
\end{equation}
where $r$ is independent of the starting point. 
\end{proof}
\mathnote{It turns out to be a pain in the ass to prove that the function  $g(\z) = E[ \ln \| \z + \aN 1_{f(\z+\aN) \leq f(\z)} \| / \| \z \|] $ is dominated by $V$. Actually one way to prove it is to go via the continuity of $g$ and then use the limits for $\| \z \|$ to infinity and $\| \z \|$ to zero of $g/V$. However the continuity by itself is a pain in the ass because of the negative part of the logarithm that needs to be controlled. I am kind of convinced that the result is correct thought. But applying the Dominated convergence theorem is painful - There might be more powerful tools ?? Why not trying the classical theorem with continuity on compacts ? [ because $n$ does not live in a compact] use spherical coordinates??}

Remark that we only derived a result for the log-step-size progress in the previous theorem. We believe that a similar result for the log-progress $\ln \| \Xtt \|/\| \Xt \|$ can also be derived. However it appears to be more technical to control the corresponding $g$-function (see proof) by $V$.

Our last result derives from the Central Limit Theorem (CLT), which is also a consequence of the the geometric ergodicity.
 It describes the speed of convergence of the result obtained from applying the LLN. We remind first a CLT result for MC extracted from \cite{Tweedie:book1993}. Given a function $g$, $S_{t}(g) = \sum_{k=1}^{t} g(\Z_{k}) $.
 %%%%%%%%%%%%%%%%%%
 %%%% CLT Meyn-Tweedie     %%%
 %%%%%%%%%%%%%%%%%%
\begin{theorem}[Theorem 17.0.1, Theorem 16.0.1 in \cite{Tweedie:book1993}]\label{theo:CLT}
Suppose that $\Z$ is a positive Harris chain with invariant probability measure $\pi$ and is aperiodic. Suppose that $\Z$ satisfies a geometric drift in the sense of \eqref{eq:driftgerd}. Let $g$ be a function on $\ZZ$ that satisfies $g^{2} \leq V$ and let $\bar g$ denote the centered function $\bar g = g - \int g d \pi$. Then the constant 
$$
\gamma_{g}^{2} = E_{\pi}[\bar g^{2}(\Z_{0}) ] + 2 \sum_{k=1}^{\infty} E_{\pi}[ \bar g(\Z_{0}) \bar g (\Z_{k})]
$$
\mathnote{Interesting to remark that the previous expression makes sense if the distribution of $(\Z_{0},\Z_{k})$ changes over time while we initialize $\Z_{0}$ with the invariant distribution $\pi$.}
is well defined, non-negative and finite, and coincides with the asymptotic variance
$$
\lim_{t \to \infty} \frac1t E_{\pi}[(S_{t}(\bar g))^{2}] = \gamma_{g}^{2} \enspace.
$$
If $\gamma_{g}^{2} > 0$ then the Central Limit Theorem holds for the function $g$, that is for any initial condition $\z_{0}$
$$
\lim_{t \to \infty} P_{\z_{0}} \left\{ (t \gamma_{g}^{2})^{-1/2} S_{t}(\bar g) \leq x  \right\} = \int_{- \infty}^{x} \frac{1}{\sqrt{2 \pi}} e^{-u^{2}/2} d u \enspace .
$$ 
If $\gamma_{g}^{2} = 0$, then $\lim_{t \to \infty} \frac{1}{\sqrt{t}} S_{t}(g) = 0 $ a.s.
\end{theorem}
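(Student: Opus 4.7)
The plan is to prove this Markov chain CLT by the classical martingale approximation method (Gordin--Lifšic), with geometric ergodicity providing the quantitative control that makes the Poisson equation solvable. The centered function $\bar g = g - \pi(g)$ has $\pi(\bar g) = 0$, and the hypothesis $g^{2} \leq V$ together with the $V$-norm convergence \eqref{eq:C-Vnorm} will give geometric decay of $P^{t} \bar g$ in $V$-norm. The heart of the proof is to write $\bar g(\Z_{k}) = M_{k} - M_{k-1} + R_{k}$ with $(M_{k})$ a stationary martingale and $(R_{k})$ a telescoping remainder of order $O(1)$.

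First I would establish finiteness of $\gamma_{g}^{2}$. By stationarity under $\pi$ and Cauchy--Schwarz, $|E_{\pi}[\bar g(\Z_{0})\bar g(\Z_{k})]| = |E_{\pi}[\bar g(\Z_{0})(P^{k}\bar g)(\Z_{0})]| \leq \|\bar g\|_{L^{2}(\pi)} \cdot \|P^{k}\bar g\|_{L^{2}(\pi)}$. Using \eqref{eq:C-Vnorm} applied to $\bar g$ (which satisfies $|\bar g| \leq V^{1/2} + \pi(V^{1/2}) \leq c V$ for some $c$ since $V \geq 1$), one obtains $\|P^{k}\bar g\|_{V} \leq R' \rho^{k} V$ for some $\rho < 1$, so $|E_{\pi}[\bar g(\Z_{0})\bar g(\Z_{k})]|$ is summable with geometric rate. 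Hence $\gamma_{g}^{2}$ is well defined and finite. The asymptotic variance formula then follows from $E_{\pi}[S_{t}(\bar g)^{2}] = t\,E_{\pi}[\bar g^{2}(\Z_{0})] + 2\sum_{k=1}^{t-1}(t-k)\,E_{\pi}[\bar g(\Z_{0})\bar g(\Z_{k})]$ and dominated convergence using the geometric bound.

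Next I would solve the Poisson equation $\hat g - P\hat g = \bar g$ by setting $\hat g := \sum_{k=0}^{\infty} P^{k}\bar g$; the geometric decay above ensures pointwise and $L^{2}(\pi)$ convergence, and the identity $\hat g - P\hat g = \bar g$ is then immediate. Define $M_{t} := \sum_{k=1}^{t}\bigl(\hat g(\Z_{k}) - P\hat g(\Z_{k-1})\bigr)$. Then $(M_{t})$ is a zero-mean martingale with stationary, ergodic increments under $P_{\pi}$, and the telescoping identity
\begin{equation*}
S_{t}(\bar g) \;=\; M_{t} + \hat g(\Z_{0}) - \hat g(\Z_{t})
\end{equation*}
holds. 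Since $\hat g \in L^{2}(\pi)$, the boundary term $\hat g(\Z_{0}) - \hat g(\Z_{t})$ is $O_{P}(1)$ and hence negligible after dividing by $\sqrt{t}$. Applying the martingale CLT of Billingsley to $M_{t}/\sqrt{t}$ under $P_{\pi}$ (ergodicity of $\Z$ under $\pi$ gives the required a.s.\ convergence of the conditional variances to $E_{\pi}[(\hat g(\Z_{1}) - P\hat g(\Z_{0}))^{2}]$, which is easily checked to equal $\gamma_{g}^{2}$), we obtain the CLT under the stationary initial distribution.

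The remaining obstacle, and the one I expect to be genuinely delicate, is upgrading the CLT from $P_{\pi}$ to an arbitrary starting point $\z_{0} \in \ZZ$. Here one uses that $\|P^{t}(\z_{0},\cdot) - \pi\|_{V} \to 0$ geometrically (which is essentially \eqref{eq:C-Vnorm}), so one can couple $(\Z_{t})_{t \geq T}$ started from $\z_{0}$ with $(\tilde \Z_{t})_{t \geq T}$ started from $\pi$ via a Nummelin splitting/regeneration argument; the coupling time is geometrically integrable, contributing an $O(1)$ error to $S_{t}(\bar g)$, which is again absorbed by $1/\sqrt{t}$. The case $\gamma_{g}^{2} = 0$ follows because then $\frac{1}{t}E_{\pi}[S_{t}(\bar g)^{2}] \to 0$, so $S_{t}(\bar g)/\sqrt{t} \to 0$ in $L^{2}(\pi)$, and a standard Borel--Cantelli argument combined with the geometric mixing upgrades this to almost sure convergence from any starting point.
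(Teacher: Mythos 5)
This statement is not proven in the paper at all: it is quoted verbatim from Meyn and Tweedie (Theorems 16.0.1 and 17.0.1), so there is no in-paper argument to compare yours against. Your route --- martingale approximation via the Poisson equation $\hat g - P\hat g = \bar g$, with the boundary terms killed by $1/\sqrt{t}$ and a regeneration/coupling step to pass from $P_{\pi}$ to an arbitrary $\z_{0}$ --- is a standard and legitimate way to prove the result, close in spirit to the treatment in Meyn--Tweedie's Chapter 17 (which organizes the argument around the split-chain atom and drift-based moment bounds rather than your explicit Gordin-type decomposition). So the overall architecture is sound.

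Two steps, however, have genuine gaps as written. First, your summability bound for the covariances replaces $\bar g$ by the crude domination $|\bar g| \leq cV$ and then invokes \eqref{eq:C-Vnorm} in the $V$-norm; unwinding this, you need either $\pi(V^{2}) < \infty$ (for your $L^{2}(\pi)$ bound on $P^{k}\bar g$) or $\pi(V^{3/2}) < \infty$ (for the pointwise version), and neither is guaranteed by the drift \eqref{eq:driftgerd}, which only yields $\pi(V) < \infty$. The fix is to exploit the actual hypothesis $g^{2} \leq V$: by Jensen, $PV^{1/2} \leq (PV)^{1/2} \leq \vartheta^{1/2}V^{1/2} + b^{1/2}1_{C}$, so $V^{1/2}$ is itself a geometric drift function and \eqref{eq:C-Vnorm} holds in the $V^{1/2}$-norm; then $|P^{k}\bar g(\z)| \leq C\rho^{k}V^{1/2}(\z)$, the covariances are bounded by $C\rho^{k}\pi(V) < \infty$, and moreover $|\hat g| \leq C'V^{1/2}$ gives $\hat g \in L^{2}(\pi)$, which your Poisson-equation step silently needs. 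Second, in the degenerate case $\gamma_{g}^{2} = 0$, $L^{2}(\pi)$ convergence of $S_{t}(\bar g)/\sqrt{t}$ plus Borel--Cantelli does not give the almost sure statement (the natural probability bounds decay only like $1/t$ and are not summable). The clean argument uses your own decomposition: $\gamma_{g}^{2} = E_{\pi}[(\hat g(\Z_{1}) - P\hat g(\Z_{0}))^{2}] = 0$ forces the martingale part to vanish, so $S_{t}(\bar g) = \hat g(\Z_{0}) - \hat g(\Z_{t})$, and $\hat g(\Z_{t})^{2}/t \to 0$ almost surely follows from the Law of Large Numbers applied to the $\pi$-integrable function $\hat g^{2}$ (differencing consecutive Cesàro averages), with Harris recurrence transferring the conclusion to every initial condition.
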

\mathnote{There is a subtlety as we cannot take directly a deterministic $g$ function. We can consider the chain $(\Zt,\Ut^{1})$ instead, in which case the function $g(\z,\uu) = \frac{\z + \uu 1_{f(\z + \uu) \leq f(\z)}}{\etastar(\z,\uu)}$. Quel drift pour cette chaine ? $V(\z,\uu) = VV(\z)$ or $V(\z,\uu) = VV(\z) + \| \uu \| $ - On the other hand it might be for this reason that it was difficult to prove the result with the $g$ function in the previous theorem ?  }
\mathnote{This subtlety anyhow already appears when we apply the LLN, we hide it but still take the expectation w.r.t.. the independent random variable.}

%%%%%%%
%%% CLT:OPO
\begin{theorem}\label{CLT:OPO}
Let the $(1+1)$-ES with generalized one-fifth success rule optimize $h = g \circ f$ where $g \in \Monotone$ and $f$ satisfies Assumptions~\ref{ass:f2}. Assume that $\factonefifth > 1$ and $\frac12 (1/\factonefifth^{\alpha} + \factonefifth^{\alpha/q}) < 1 $. Then for any initial condition $(\x_{0},\sigma_{0}) \in \Rnstar \times \Rplusstar$
$$
\lim_{t \to \infty} P_{\frac{\x_{0}}{\sigma_{0}}} \left\{ \frac{\sqrt{t}}{\gamma_{g}} \left( \frac{1}{t} \ln \frac{\st}{\sigma_{0}}  - \CR  \right)  \right\}  = \frac{1}{\sqrt{2\pi}} \int_{- \infty}^{x} \exp(- u^{2}/2) du
$$
where $\gamma_{g}^{2}= \frac1t E_{\pi}[(\ln \frac{\sigma_{t}}{\sigma_{0}} - t \CR)^{2}] > 0$.
\end{theorem}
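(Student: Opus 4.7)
The approach is to apply the Markov chain Central Limit Theorem stated in Theorem~\ref{theo:CLT} to a suitably chosen functional of an extended chain. The starting point is the additive decomposition
$$\ln\frac{\st}{\sigma_{0}}=\sum_{k=0}^{t-1}\ln\etastar_{k}=\sum_{k=0}^{t-1}\ln\gamma\left[\frac{q+1}{q}\mathbf{1}_{\{f(\Z_k+\U_k^1)\leq f(\Z_k)\}}-\frac{1}{q}\right],$$
so that the definition \eqref{eq:CR} of $\CR$ makes $\frac{1}{t}\ln(\st/\sigma_{0})-\CR$ the Cesaro average of a centered functional. The conceptual obstacle is that $\ln\etastar_{k}$ depends on both $\Z_k$ and the innovation $\U_k^1$, while Theorem~\ref{theo:CLT} is formulated for functions of the Markov state alone.

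To bypass this, the plan is to work on the extended chain $\tilde\Z_k=(\Z_k,\U_k^1)$ on $\ZZ\times\R^n$, whose transition kernel is $\tilde P((\z,\uu),A\times B)=\mathbf{1}_A(G(\z,\uu))\int_B\pp(\uu')d\uu'$. Because $(\U_k^1)_{k\in\NNN}$ is an i.i.d.\ Gaussian sequence independent of the past, $\tilde\Z$ inherits $\psi$-irreducibility and aperiodicity from $\Z$: the minorisation of Proposition~\ref{prop:D-6-7smallSets} combines by tensorisation with the Gaussian density, which is strictly positive on every ball. Moreover $\tilde V(\z,\uu):=V(\z)$, where $V$ is the drift function of Theorem~\ref{prop:OPO}, is a geometric drift for $\tilde P$ since $\tilde P\tilde V(\z,\uu)=PV(\z)\leq\vartheta V(\z)+b\mathbf{1}_C(\z)$, which has exactly the form required by \eqref{eq:driftgerd} on the product space. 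Positive Harris recurrence with invariant probability $\tilde\pi=\pi\otimes\pp(\uu)d\uu$ follows as in Corollary~\ref{coro:HR-P}.

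I then define the bounded function $h(\z,\uu):=\ln\etastar(\z,\uu)+\CR$; its centered integral $\int h\,d\tilde\pi=0$ holds by \eqref{eq:CR}, and boundedness together with $\tilde V\geq 1$ gives $h^{2}\leq K\tilde V$ for a suitable $K$. Applying Theorem~\ref{theo:CLT} to $\tilde\Z$ with this function directly yields
$$\lim_{t\to\infty}P_{(\z_{0},\uu_{0})}\!\left\{\frac{1}{\sqrt{t}\,\gamma_{g}}\sum_{k=0}^{t-1}h(\Z_k,\U_k^1)\leq x\right\}=\frac{1}{\sqrt{2\pi}}\int_{-\infty}^{x}e^{-u^{2}/2}du,$$
and recognising $\sum_{k=0}^{t-1}h(\Z_k,\U_k^1)=\ln(\st/\sigma_{0})-t\CR$ delivers the announced CLT for any deterministic initial condition $(\x_{0},\sigma_{0})\in\Rnstar\times\Rplusstar$, using that the conclusion of Theorem~\ref{theo:CLT} holds from every starting point by positive Harris recurrence.

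The strict positivity $\gamma_{g}^{2}>0$ is obtained by contradiction: otherwise Theorem~\ref{theo:CLT} would force $\sum_{k=0}^{t-1}h(\Z_k,\U_k^1)/\sqrt{t}\to 0$ a.s., which is incompatible with the non-degenerate per-step randomness of the success indicator. Indeed, the conditional variance $\mathrm{Var}(h(\Z_k,\U_k^1)\mid\Z_k)=(\ln\gamma\,(q+1)/q)^{2}\,w(\Z_k)(1-w(\Z_k))$ with $w(\z)=\Pr(f(\z+\aN)\leq f(\z))\in(0,1)$ for $\pi$-a.e.\ $\z$, has strictly positive Cesaro limit $(\ln\gamma\,(q+1)/q)^{2}\int w(1-w)d\pi$ by the LLN used already in Proposition~\ref{prop:asCVPS}. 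The main technical obstacle I foresee is the rigorous transfer of the stability properties from $\Z$ to $\tilde\Z$ (irreducibility, small sets, aperiodicity, drift), but this reduces to routine tensorisation because the innovation $\U_{k+1}^{1}$ is sampled independently of $(\Z_k,\U_k^{1})$ from a density that is positive on all of $\R^{n}$.
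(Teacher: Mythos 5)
Your main line of argument is the same as the paper's: the same additive decomposition of $\ln(\st/\sigma_{0})$ into the per-step functional $\ln\etastar$, the same passage to the extended chain $(\Zt,\Ut^{1})$ with invariant law $\pi\otimes\pp$ and drift function $\tilde V(\z,\uu)=V(\z)$, the same domination of the (bounded, centered) functional by a multiple of $\tilde V$, and the same appeal to Theorem~\ref{theo:CLT}. The step where you genuinely depart from the paper is the proof that $\gamma_{g}^{2}>0$, and that step, as you wrote it, does not hold up.

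You infer $\gamma_{g}^{2}>0$ from the fact that the conditional variance $\mathrm{Var}\bigl(h(\Z_{k},\U_{k}^{1})\mid\Z_{k}\bigr)=(\ln\gamma\,(q+1)/q)^{2}\,w(\Z_{k})(1-w(\Z_{k}))$ has a strictly positive stationary average. This implication is false in general: the asymptotic variance of an additive functional of a Markov chain can vanish even though every increment has strictly positive conditional variance, because the martingale part of the sum can be cancelled by its compensator when the centered functional is a coboundary. Concretely, if $h(\z,\uu)=\phi(G(\z,\uu))-\phi(\z)$ for a bounded $\phi$, then $\sum_{k<t}h(\Z_{k},\U_{k}^{1})=\phi(\Z_{t})-\phi(\Z_{0})$ is bounded, so the asymptotic variance is zero, while $\mathrm{Var}(h(\Z_{k},\U_{k}^{1})\mid\Z_{k})=\mathrm{Var}(\phi(\Z_{k+1})\mid\Z_{k})$ can be bounded away from zero. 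To make your argument work you would have to rule out that $\ln\etastar+\CR$ is ($\pi$-a.e.) of this coboundary type, which you do not address. The paper argues differently: it invokes the degenerate branch of Theorem~\ref{theo:CLT} (if $\gamma_{g}^{2}=0$ then $t^{-1/2}S_{t}\to 0$ a.s.) and derives a contradiction with the already established almost-sure limit $\frac1t\ln(\st/\sigma_{0})\to-\CR\neq 0$; whatever its own level of detail, that is not the inference you make, and your version leaves a genuine gap.

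A secondary slip: for the extended chain with the kernel you wrote, $\tilde P\tilde V(\z,\uu)=V(G(\z,\uu))$, not $PV(\z)$, and the pointwise inequality $V(G(\z,\uu))\leq\vartheta V(\z)+b\,1_{C}(\z)$ fails, since a successful step can land arbitrarily close to the optimum and make $V(G(\z,\uu))=\gamma^{\alpha}/f(\z+\uu)$ arbitrarily large while $V(\z)$ stays moderate. The drift for the pair chain therefore needs a slightly different argument (for instance a drift function of the form $\tilde V(\z,\uu)=V(G(\z,\uu))$ together with a check that $\{(\z,\uu):G(\z,\uu)\in C\}$ is petite, or a multi-step drift); the paper is admittedly just as terse at this point, but the identity $\tilde P\tilde V=PV$ as you state it is not correct for the chain you defined.
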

\begin{proof}
We have seen that $\frac{1}{t} \ln \frac{\st}{\sigma_{0}} = \frac{1}{t} \sum_{k=0}^{t-1} \ln \frac{\sigma_{k+1}}{\sigma_{k}} = \frac{1}{t} \sum_{k=0}^{t-1} g_{\ref{CLT:OPO}}(\Z_{k},\U_{k}^{1})  $ with
$$
g_{\ref{CLT:OPO}}(\Z_{k},\U_{k}^{1}) = \ln \gamma \left( \left( \frac{1+q}{q} \right) 1_{ \{ f(\Z_{k}+\U_{k}^{1}) \leq f(\Z_{k}) \}} - \frac{1}{q} \right) \enspace.
$$
Because the definition domain of $g_{\ref{CLT:OPO}}$ is  $\Rnstar \times \R^{n}$, we cannot directly compare it to the geometric drift function $V$ and verify whether $g^{2} \leq V$. However let us consider not only $\Zt$ but the couple $(\Zt,\Ut^{1})$ where the $\Ut^{i}$ are i.i.d.\ distributed according to $\N(0,\Id)$. Clearly $(\Zt,\Ut^{1})$ is an homogeneous Markov Chain that will inherit the properties of $\Zt$. Typically the chain is $\varphi$-irreducible with respect to the Lebesgue measure on $\Rnstar \times \R^{n}$, aperiodic and $D_{[l_{1},l_{2}]} \times \R^{n}$ are some small sets for the chain. The function $\tilde V(\z,\uu) = V(\z)$ (with $V$ defined in \eqref{eq:driftOPO}) satisfies a geometric drift in the sense of \eqref{eq:driftgerd} as from the small set shape we see that we only need to control the chain outside $D_{[l_{1},l_{2}]}$ while $\uu \in \R^{n}$. The invariant probability distribution of the chain is $\pi  	\otimes p$.

To be able to apply Theorem~\ref{theo:CLT}, we need to verify that $g_{\ref{CLT:OPO}}^{2} \leq K_{\ref{CLT:OPO}} \tilde V$ with $K_{\ref{CLT:OPO}}$ a constant larger $1$ (as the same arguments used before holds here as well, namely if $\tilde V$ satisfies a geometric drift condition, then every multiple of $\tilde V$ also satisfies a geometric drift condition, provided the multiplication constant is larger one to still ensure that the function is larger $1$).
Let us now remark that $g_{{\ref{CLT:OPO}}}^{2} \leq ((\ln \gamma) (2+q)/q)^{2} $ and hence $g_{{\ref{CLT:OPO}}}^{2} \leq (((\ln \gamma) (2+q)/q)^{2} + 1) \tilde V$.
Using Theorem~\ref{theo:CLT}, we know that $\gamma_{g}^{2}$ is well defined and cannot equal $0$ otherwise it would imply that $\frac{1}{\sqrt{t}} \ln \frac{\st}{\sigma_{0}} = 0$ that would contradict the fact that $\lim_{t \to \infty}\frac{1}{t} \ln \frac{\st}{\sigma_{0}} = - \CR \neq 0$. Hence $\gamma_{g}^{2} > 0$ and we conclude using Theorem~\ref{theo:CLT}.
\end{proof}

\section{Discussion}\label{sec:discussion}

\begin{figure}
\centering
\includegraphics[height=0.3\textwidth]{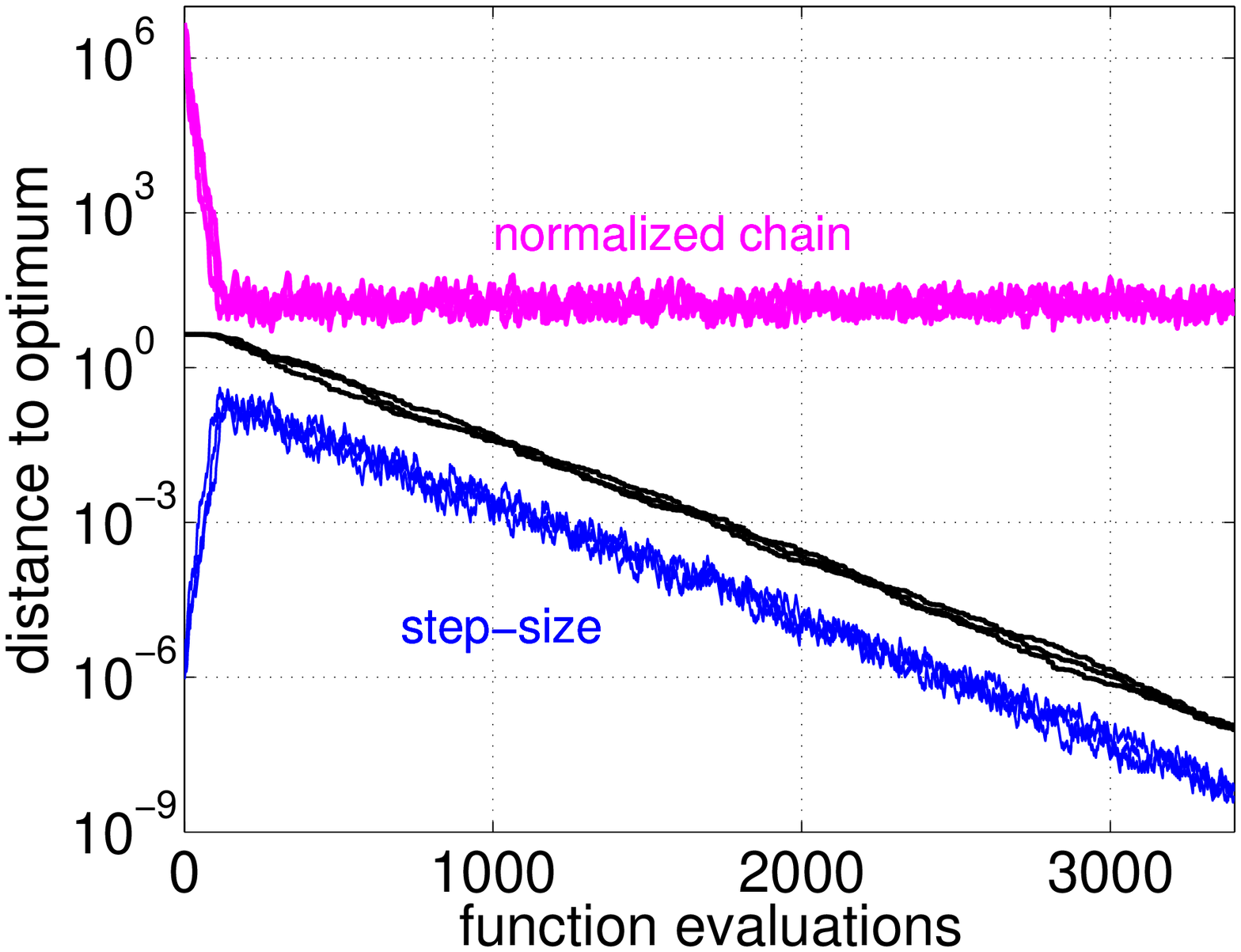}
\includegraphics[height=0.3\textwidth]{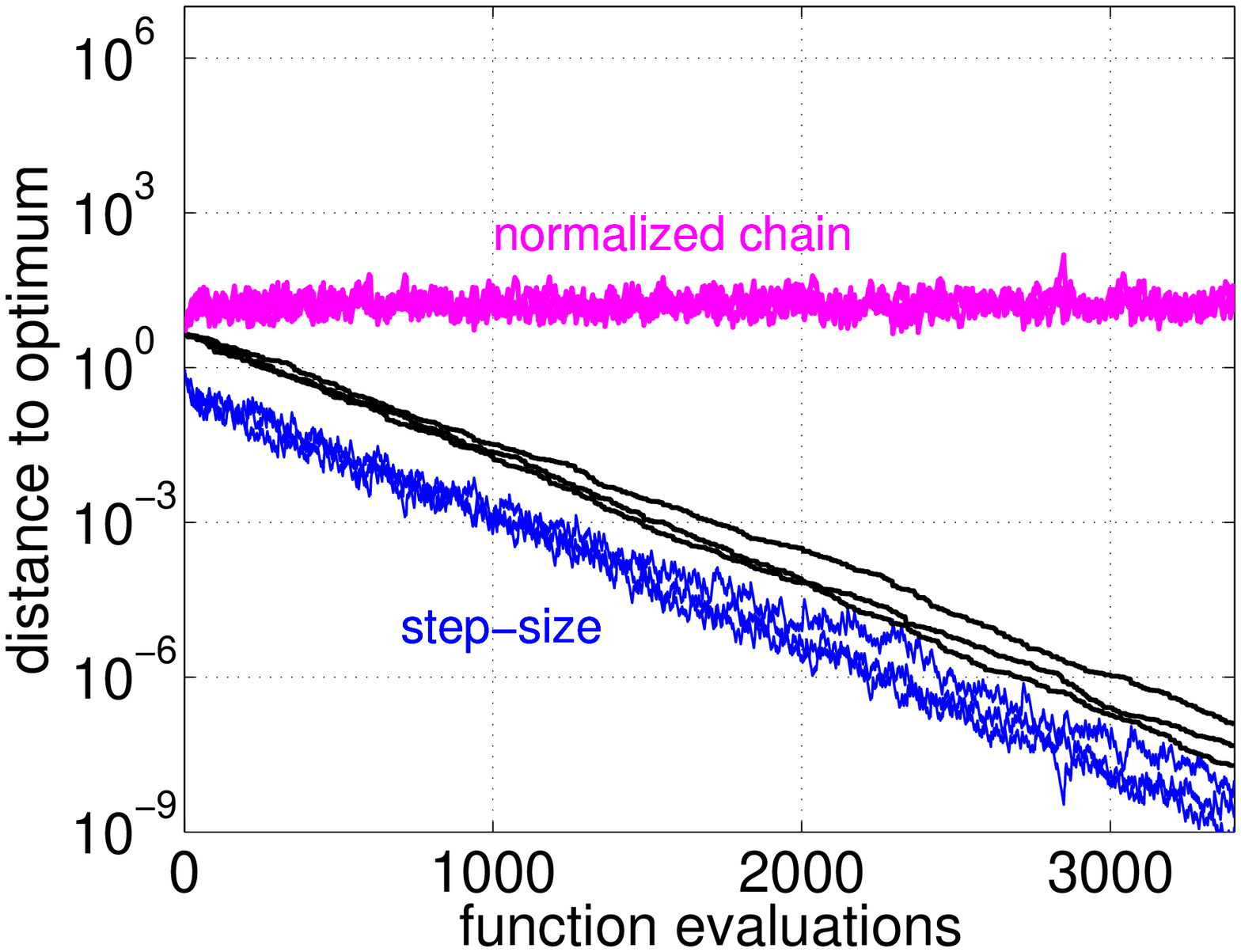}
\caption{\label{fig:simul} Convergence simulations of the $(1+1)$-ES with generalized one-fifth success rule  on spherical functions $f(\x)= g(\| \x \|)$ for $g \in \Monotone$ in dimension $\dim = 20$ (parameters $\gamma=\exp(1/3)$ and $q=4$).
Each plot is in log scale and depicts in black the distance to optimum, i.e., $\| \Xt\|$, in blue the respective step-size $\st$ and in magenta the norm of the normalized chain $\| \Zt\|$. The $x$-axis is the number of function evaluations corresponding to the iteration index $t$. On the left the simulation is voluntarily started with a too small step-size equal to $\sigma_{0}=10^{-6}$ to illustrate the adaptivity ability of the algorithm. On the right, the initial step-size equals $1$.
}
\end{figure}

Using the methodology developed in \citecompanion, we have proven the global linear convergence of the $(1+1)$-ES with generalized one-fifth success rule on functions that write $h = g \circ f $ where $f$ is a continuously differentiable positively homogeneous with degree alpha function (satisfying an additional mild condition on the gradient norm)
and $g$ is a
strictly increasing function. This class of functions includes non quasi-convex functions and non continuous functions, an untypical setting for proving linear convergence of optimization algorithms in general. 

Linear convergence holds under the condition that the step-size increases in case of success, i.e., $\gamma > 1$ and that 
\begin{equation}\label{cond:GE}
\frac12  \left( 1/\gamma^{\alpha} + \gamma^{\alpha/q}  \right)  < 1 \enspace.
\end{equation}
Especially, this condition only depends on the function via $\alpha$ and is thus the same for \emph{any} $g \circ f$ with $f$ is positively homogeneous with degree $\alpha$ and continuously differentiable (plus satisfying Assumptions~\ref{ass:f2}).

Because on a linear function the probability of success equals $1/2$, the condition in \eqref{cond:GE}
 corresponds to the expected inverse of the step-size change to the alpha--on a linear function--being strictly smaller than $1$. In other words, the step-size should increase on a linear function. While this latter condition seems a reasonable requirement for an adaptive step-size algorithm, let us point out that some algorithms like the $(1,2)$-ES with self adaptation fail to satisfy this condition (see \cite{hansen2006ecj} for a thorough analysis of this problem). We believe that the fact that linear convergence on the class of functions investigated in the paper is related to increasing the step-size on linear functions illustrates the strong need to study simple models like the linear function when designing \acprs\ algorithms. 
 
Our statements for the linear convergence hold for any initial solution and any initial step-size. This latter property reflects the main advantage of \emph{adaptive} step-size methods: the initial step-size does not need to be too carefully chosen to ensure good convergence properties. 
%This adaptivity property is also crucial when the objective function landscapes changes with time. 
Note that methods like simulated annealing or Simultaneous perturbation stochastic approximation (SPSA) \cite{Spall:1992} do not share this nice property and are very sensitive to the choice of some parameters that unfortunately need to be adjusted by the user. 
%For the simulated annealing the cooling schedule chosen to adjust the acceptance probability distribution is sensible.

 The adaptation phase, i.e., how long it takes such that the linear convergence is ``observed'' (see Figure~\ref{fig:simul} left) is related theoretically to the convergence speed of $(\Zt)_{t \in \NNN}$ to its stationary distribution. We have proven a geometric drift that ensures that this convergence is geometrically fast and the geometric rate is independent of the initial condition.

 \todo{Maybe talk about different form of convergence: We have formulated linear convergence in different manner. First almost sure}

Previous attempts to analyze \acprs\ always focused on much smaller classes of functions. The sphere function was analyzed in \cite{TCSAnne04,Jens:2007,jens:gecco:2006}, and a specific class of convex quadratic functions was also analyzed in \cite{jens:2005,jens:tcs:2006}. Our proof is more general: it holds on a wider class of function that also includes convex-quadratic functions. Indeed in Lemma~\ref{lem:CQ} we have seen that convex-quadratic functions satisfy Assumption~\ref{ass:f2} with $\alpha=2$ if $n \geq 2$. Hence linear convergence holds on convex-quadratic functions if $n \geq 2$ under the condition that $\frac12 \left( 1/\gamma^{2} + \gamma^{2/q} \right) < 1$. This latter condition can be relaxed observing that for any $f_{H} ( \x) = \frac 12 \x^{T} H \x$, $f_{H} ( \x) = g_{\alpha} \left( f_{H}^{\alpha/2}(\x) \right)$  for  $g_{\alpha}: x \in \R^{+} \mapsto x^{2/\alpha} $ ($g_{\alpha} \in \Monotone$). The function $\x \mapsto f_{H}^{\alpha/2}(\x) $ is positively homogeneous with degree $\alpha$ and stays continuously differentiable if $\alpha \geq 2$. Hence linear convergence will hold for a given $(\gamma, q) $ if there exists $2 \leq \alpha \leq n$ such that $\frac12 \left( 1/\gamma^{\alpha} + \gamma^{\alpha/q} \right) < 1$.

%Let us however investigate more closely how to apply the analysis on the sphere function or on convex-quadratic functions. Let us note that the sphere function $f(\x) = \| \x \|^{2}$ can be written for all $\alpha$ as  $g_{\alpha}( \| \x \|^{\alpha})$ for  $g_{\alpha}: x \in \R^{+} \mapsto x^{2/\alpha} $ ($g_{\alpha} \in \Monotone$). The function $\x \mapsto \| \x \|^{\alpha}$ being positively homogeneous with degree $\alpha$,  for a given dimension $n$, the condition $\alpha \leq n$ formulated in Assumptions~\ref{ass:f2} can always be satisfied on the sphere function by choosing the appropriate $(\alpha,g_{\alpha})$. The same will be true for any norm function $\x \mapsto N(\x)$, in particular for any convex-quadratic function. Assuming that the underlying norm function $N()$ is continuously differentiable, we, in addition, need to have $\alpha > 1$ such that the function $\x \mapsto N(\x)^{\alpha}$ stays continuously differentiable.  Hence on the sphere, on a convex-quadratic function or on any norm function (assuming the continuous differentiability of $N()$), linear converge will hold for a given $(\gamma, q) $ if there exists $1 < \alpha \leq n$ such that $\frac12 \left( 1/\gamma^{\alpha} + \gamma^{\alpha/q} \right) < 1$.

We have obtained a comprehensive expression for the convergence rate of the algorithm as 
\begin{equation}\label{flora-is-crying}
\CR = - \ln \gamma   \left( \frac{q+1}{q} \PS - \frac1q	  \right) 
\end{equation}
where $\PS$ is the asymptotic probability of success. 
This formula implies that when convergence occurs the probability of success is strictly smaller than $1/(q+1)$, i.e., strictly smaller than $1/5$ using the traditional $1/5$ as target success probability (corresponding to $q=4$). 

While we have proven here that $\CR > 0$, i.e., linear convergence indeed holds, in the case of algorithms that do not guarantee the monotony of $f(\Xt)$, the sign of the ``convergence'' rate is usually not possible to obtain analytically  and one needs to resort to Monte Carlo simulations \cite{TCSAnne04}. 

Besides the sign of the convergence rate, one would like to extract more properties of $\CR$ like the dependence in the dimension, or the dependence in the condition number for convex-quadratic functions. This seems to be hard to achieve with the present approach as $\CR$ depends on the stationary distribution of the normalized chain for which little is known except its existence. However Monte Carlo simulations are natural and always possible to estimate those dependencies. The present paper gives a rigorous framework to perform those Monte Carlo simulations.
Note that using more ad-hoc techniques, it is possible to obtain some dependencies in the dimension or condition number  \cite{Jens:2007,jens:gecco:2006,jens:2005,jens:tcs:2006}.

Though its convergence proof ``resisted'' for more than 40 years, the algorithm analyzed is simple and relatively straightforward as witnessed by the fact that it was already proposed very early and by various researchers in parallel. We however want to emphasize that nowadays this algorithm should mainly have an academic purpose. Indeed more robust comparison-based adaptive algorithm exist, namely the CMA-ES algorithm where in addition to the step-size, a full covariance matrix is adapted \cite{hansen2001}. \todo{Note however that recently, the $(1+1)$-ES was compared with recently introduced methods and compared favorably \todo{detail more - in particular on convexity - read the paper} \cite{Stich}. READ Stich et al. siam paper.}

Last we want to emphasize two points:

1) A common misconception is that randomized methods are good for global optimization and bad for local optimization. The present paper by proving a global linear convergence for a \acprs\ disproves this binary view. In addition, comparisons of the CMA-ES algorithm--the state-of-the-art comparison based adaptive algorithm--with BFGS and NEWUOA show also that CMA-ES is competitive on (unimodal) composite of convex-quadratic functions provided they are significantly non-separable and non-convex \cite{auger:colloquegiens}. This result does not come as a surprise as \acprs\ and CMA-ES were designed first as robust local search and carefully investigated to optimally solve simple functions like the sphere, the linear function and convex-quadratic functions. 
% that \acprs\ converge linearly and thorough empirical testing prove that convergence rate even on convex quadratic function are surprisingly close from state-of-the-art DFO methods and catch up convergence rates when a monotonic transformation is applied \cite{Giens}. Those results are not surprising when considering that \acprs\ or the CMA-ES algorithm were designed to be robust local search algorithm and tuned to achieve almost optimal convergence rate on simple function like the sphere function.
%
%o cite Stich et a.l. for performance on convex functions.

%2) Invariance is an important concept in science because properties true on a subset of function generalize to a whole equivalence class.
%The \acprs\ algorithms exhibit typically several invariances: invariance to strictly increasing transformations of the objective function, translation invariance and scale-invariance. We believe that one key of the success of CMA-ES is invariance: besides the invariance properties shared with \acprs, the algorithm is invariant to general linear transformations.  The methodology adopted here to analyze linear convergence exploits fully those invariance properties. Indeed from those invariances ensued on scaling-invariance function the existence of the normalized Markov Chain whose stability is studied \citecompanion.
 
2) The present paper illustrates that the theory of Markov Chains with discrete time and continuous state space is useful and powerful for the analysis of \acprs. We believe that the present analysis can be extended further for the case of stochastic functions or for the case of algorithms where a covariance matrix is adapted in addition to a step-size.

\nnote{It's a discussion I had I guess a long time ago. It is clear that if the probability of success is 1/5 then the convergence rate is zero. Hence while it is ok to talk about the idea of trying to maintain 1/5 - it does not make sense to have calculate this 1/5 on a function where we want to converge (the sphere) - while we then input that with $1/5$ the step-size should stay constant. }

%\section*{Acknowledgments}
%The author thanks the anonymous authors whose work largely
%constitutes this sample file. He also thanks the INFO-TeX mailing
%list for the valuable indirect assistance he received.

\newpage

\bibliographystyle{plain}
\bibliography{optimbib}

\end{document}